\newcommand\scl{\operatornamewithlimits{scl}}
\newcommand\comp{\operatornamewithlimits{c}}
\newcommand\boxb[1]{\square_b}
\numberwithin{equation}{section}
\newcommand\paperbody%
\newcommand\TT{\mathbb{T}}
\newcommand{\smA}{\mathcal{S}}
\newcommand{\cK}{{\mathcal K}}
\newcommand{\Index}{{\rm Index}}
\newcommand{\Kc}{\operatorname{K}_{\text{c}}}
\newcommand{\Kt}{\operatorname{K}}
\newcommand{\Ko}{\operatorname{K}^{1}}
\newcommand{\Kco}{\operatorname{K}^{1}_{\text{c}}}
\newcommand{\Ho}{\operatorname{H^{\odd}}}
\newcommand{\Hoc}{\operatorname{H^{\odd}_{\text{c}}}}
\newcommand{\cU}{{\mathcal U}}
\newcommand{\cN}{{\mathcal N}}
\newcommand{\R}{{\mathbb R}}
\newtheorem{lemma}{Lemma}
\newtheorem{proposition}{Proposition}
\newtheorem{corollary}{Corollary}
\newtheorem{theorem}{Theorem}
\newtheorem{non-theorem}{Non-Theorem}
\theoremstyle{remark}
\newtheorem{definition}{Definition}
\newcommand\coF{{}^{\mathcal{C}}\kern-2pt\Lambda}
\newcommand\cFTs{{}^{\Phi}\overline{T}\kern-1pt{}^*}
\newcommand\even{\text{even}}
\newcommand\odd{\text{odd}}
\newcommand\Cl{\operatorname{Cl}}
\newcommand\Ch{\operatorname{Ch}}
\newcommand\Td{\operatorname{Td}}
\newcommand\com[1]{\overline{#1}}
\newcommand\cA{\mathcal{A}}
\newcommand\cH{\mathcal{H}}
\newcommand\cF{\mathcal{F}}
\newcommand\cL{\mathcal{L}}
\newcommand{\cP}{{\mathcal P}}
\newcommand\fM{\mathfrak{M}}
\newcommand\ZZ{\mathbb Z}
\newcommand\bbC{\mathbb C}
\newcommand\bbE{\mathbb E}
\newcommand\bbN{\mathbb N}
\newcommand\bbP{\mathbb P}
\newcommand\bbQ{\mathbb Q}
\newcommand\bbR{\mathbb R}
\newcommand\bbS{\mathbb S}
\newcommand\bbT{\mathbb T}
\newcommand\bbZ{\mathbb Z}
\newcommand\cM{\mathcal M}
\newcommand\cS{\mathcal S}
\newcommand\CIc{{\mathcal{C}}^{\infty}_c}
\newcommand\CI{{\mathcal{C}}^{\infty}}
\newcommand\Diag{\operatorname{Diag}}
\newcommand\Diff[1]{\operatorname{Diff}^{#1}}
\newcommand\cFNs{{}^{\Phi}\overline N\kern-1pt{}^*}
\newcommand\ind{\operatorname{ind}}
\newcommand\Thom{\operatorname{Thom}}
\newcommand\tr{\operatorname{tr}}
\newcommand\Hom{\operatorname{Hom}}
\newcommand\Id{\operatorname{Id}}
\newcommand\UU{\operatorname{U}}
\newcommand\PU{\operatorname{PU}}
\newcommand\ci{${\mathcal{C}}^\infty$}
\newcommand\dCI{\dot{\mathcal{C}}^{\infty}}
\newcommand\ha{\frac{1}{2}}
\newcommand\pa{\partial}
\newcommand\longhookrightarrow{\hookrightarrow}
\newcommand\Mand{\text{ and }}
\newcommand\Min{\text{ in }}
\newcommand\Mon{\text{ on }}
\newcommand\Mwhere{\text{ where }}
\newcommand\Mwith{\text{ with }}
\begin{document}
\title[Projective families with decomposable DD invariant]
{The index of projective families of elliptic operators: the decomposable case}


\author{V. Mathai}
\address{Department of Mathematics, University of Adelaide,
Adelaide 5005, Australia}
\email{vmathai@maths.adelaide.edu.au}
\author{R.B. Melrose}
\address{Department of Mathematics,
Massachusetts Institute of Technology,
Cambridge, Mass 02139}
\email{rbm@math.mit.edu}
\author{I.M. Singer}
\address{Department of Mathematics,
Massachusetts Institute of Technology,
Cambridge, Mass 02139}
\email{ims@math.mit.edu}


\begin{abstract} An index theory for projective families of elliptic
  pseudodifferential operators is developed under two conditions. First,
  that the twisting, i\@.e\@.~Dixmier-Douady, class is in
  $\operatorname{H}^2(X;\bbZ)\cup \operatorname{H}^1(X;\bbZ)\subset
  \operatorname{H}^3(X;\bbZ)$ and secondly that the 2-class part is
  trivialized on the total space of the fibration. One of the features of
  this special case is that the corresponding Azumaya bundle can be refined
  to a bundle of smoothing operators.  The topological and the analytic
  index of a projective family of elliptic operators associated with the
  smooth Azumaya bundle both take values in twisted $K$-theory of the
  parameterizing space and the main result is the equality of these two
  notions of index. The twisted Chern character of the index class is then
  computed by a variant of Chern-Weil theory.
\end{abstract}


\keywords{Twisted K-theory, index theorem, decomposable Dixmier-Douady 
invariant,
smooth Azumaya bundle, Chern Character, twisted cohomology}


\thanks{{\em Acknowledgments.} We would like to thank the referees for
  their helpful suggestions. The first author thanks the Australian Research Council
   for support.  The second author acknowledges the support of
  the National Science Foundation under grant DMS0408993.}
\subjclass[2000]{19K56, 58G10, 58G12, 58J20, 58J22}

\maketitle

\section*{Introduction}

The basic object leading to twisted K-theory for a space, $X,$ can be taken
to be a principal $\PU$-bundle $\cP\longrightarrow X,$ where
$\PU=\UU(\cH)/\UU(1)$ is the group of projective unitary operators on some
separable infinite-dimensional Hilbert space $\cH.$ Circle bundles over $X$
are classified up to isomorphism by their Chern classes in
$\operatorname{H}^2(X;\bbZ)$ and analogously principal $\PU$ bundles are
classified by $\operatorname{H}^3(X;\bbZ)$ with the element $\delta(\cP)$
being the Dixmier-Douady invariant of $\cP.$ Just as $\Kt^0(X),$ the
ordinary K-theory group of $X,$ may be identified with the group of
homotopy classes of maps $X\longrightarrow \cF(\cH)$ into the Fredholm
operators on $\cH,$ the twisted K-theory group $\Kt^0(X;\cP)$ may be
identified with the homotopy classes of sections of the bundle
$\cP\times_{\PU}\cF$ arising from the conjugation action of $\PU$ on $\cF.$
The action of $\PU$ on the compact operators, $\cK,$ induces the
\emph{Azumaya} bundle, $\cA.$ The K-theory, in the sense of $C^*$ algebras,
of the space of continuous sections of this bundle, written $\Kt^0(X;\cA),$
is naturally identified with $\Kt^0(X;\cP).$ From an analytic viewpoint
$\cA$ is more convenient to deal with than $\cP$ itself.

In the case of circle bundles isomorphisms are classified up to homotopy by
an element of $\operatorname{H}^1(X;\bbZ),$ corresponding to the homotopy
class of a smooth map $X\longrightarrow \UU(1).$ Similarly, $\delta \in
\operatorname{H}^3(X;\bbZ)$ determines $\cP$ up to isomorphism with the
isomorphism class determined up to homotopy by an element of
$\operatorname{H}^2(X;\bbZ),$ corresponding to the fact that $\PU$ is a
$K(\bbZ,2).$ The result is that $\Kt^0(X;\cA)$ depends as a group on the
choice of Azumaya bundle with DD invariant $\delta$ up to an action of
$\operatorname{H}^2(X;\bbZ).$

In \cite{mms1} we extended the index theorem for a family of elliptic
operators, giving the equality of the analytic and the topological index
maps in K-theory, to the case of twisted K-theory where the twisting class
is a torsion element of $\operatorname{H}^3(X;\bbZ).$ In this paper we
prove a similar index equality in the case of twisted K-theory when the
index class is decomposable
\begin{equation}
\delta =\alpha \cup\beta,\ \alpha \in \operatorname{H}^1(X;\bbZ),\ \beta
\in \operatorname{H}^2(X;\bbZ),
\label{mms2002.137}\end{equation}
and the fibration $\phi:Y\longrightarrow X$ is such that $\phi^*\beta =0$
in $\operatorname{H}^2(Y;\bbZ).$

Under the assumption \eqref{mms2002.137}, that the class $\delta$ is
\emph{decomposed,} we show below that there is a choice of principal $\PU$
bundle with class $\delta$ such that the classifying map above,
$c_P:X\longrightarrow K(\bbZ;3)$ factors through $\UU(1)\times\PU.$
Twisting by a homotopically non-trivial map $\kappa:X\longrightarrow \PU$
does not preserve this property, so in this decomposed case there is indeed
a natural choice of smooth Azumaya bundle, $\smA,$ up to homotopically trivial
isomorphism and this induces a choice of twisted K-group determined by the
decomposition of $\delta;$ we denote this well-defined twisted K-group by
\begin{equation}
\Kt^0(X;\alpha,\beta )=\Kt^0(X;\cA),\quad \cA=\overline{\smA}.
\label{mms2002.197}\end{equation}

The effect on smoothness of the assumption of decomposability on the
Dixmier-Douady class can be appreciated by comparison with the simpler case
of degree 2. Thus, if $\alpha _1\cup\alpha _2\in \operatorname{H}^2(X;\bbZ)$ is a
decomposed class, $\alpha _i\in \operatorname{H}^1(X;\bbZ)$ for $i=1,2,$ then the
associated line bundle is the pull-back of the Poincar\'e line bundle
associated to a polarization on the 2-torus under the map $u_1\times u_2,$
where the $u_i\in\CI(X;\UU(1))$ represent the $\alpha _i.$ This is to be
contrasted with the general case in with the line bundle is the
pull-back from a classifying space such as $\PU,$ and is only unique up to 
twisting by a smooth map $\kappa':X\longrightarrow \UU(1).$

The data we use to define a smooth Azumaya bundle is:-
\begin{itemize}
\item 
A smooth function 
\begin{equation}
u\in\CI(X;\UU(1))
\label{mms2002.115}\end{equation}
the homotopy class of which represents $\alpha \in \operatorname{H}^1(X,\bbZ).$
\item A Hermitian line bundle (later with unitary connection)
\begin{equation}
\xymatrix{
L\ar[d]\\
X
}
\label{mms2002.86}\end{equation}
with Chern class $\beta \in \operatorname{H}^2(X;\bbZ).$
\item A smooth fiber bundle of compact manifolds
\begin{equation}
\xymatrix{
Z\ar@{-}[r]&Y\ar[d]^\phi\\
& X
}
\label{mms2002.59}\end{equation}
such that $\phi^*\beta =0$ in $\operatorname{H}^2(Y;\bbZ).$
\item An explicit global unitary trivialization 
\begin{equation}
\gamma:\phi^*(L)\overset{\simeq}\longrightarrow Y\times\bbC.
\label{mms2002.87}\end{equation}
\end{itemize}
These hypotheses are satisfied by taking $Y=\tilde L,$ the circle bundle
of $L,$ and then there is a natural choice of $\gamma$ in
\eqref{mms2002.87}. This corresponds to the `natural' smooth Azumaya bundle
associated to the given decomposition of $\delta =\alpha \cup\beta$ and we
take $\Kt^0(X;\alpha ,\beta)$ in \eqref{mms2002.197} to be defined by this
Azumaya bundle, discussed as a warm-up exercise in
Section~\ref{Circlecase}. In Appendix~\ref{appendix:universal} it is
observed that any fibration for which $\beta$ is a multiple of a degree $2$
characteristic class of $\phi:Y\longrightarrow X$ satisfies the hypothesis
in \eqref{mms2002.59}.

In general, the data \eqref{mms2002.115} -- \eqref{mms2002.87} are shown
below to determine an infinite rank `smooth Azumaya bundle', which we
denote $\smA(\gamma).$ This has fibres isomorphic to the algebra of
smoothing operators on the fibre, $Z,$ of $Y$ with Schwartz kernels
consisting of the smooth sections of a line bundle $J(\gamma)$ over $Z^2.$
The completion of this algebra of `smoothing operators' to a bundle with
fibres modelled on the compact operators has Dixmier-Douady invariant
$\alpha\cup\beta.$

In outline the construction of $\smA(\gamma)$ proceeds as follows; details
may be found in Section~\ref{SmoothAzbun}. The trivialization
\eqref{mms2002.87} induces a groupoid character $Y^{[2]}\longrightarrow
\UU(1),$ where $Y^{[2]}$ is the fiber product of two copies of
fibration. Combined with the choice \eqref{mms2002.115} this gives a map
from $Y^{[2]}$ into the torus and hence by pull-back the line bundle
$J=J(\gamma).$ This line bundle is \emph{primitive} in the sense that under
lifting by the three projection maps
\begin{equation}
\xymatrix{\tilde L^{[3]}\ar@<2ex>[r]^{\pi_S}
\ar[r]^{\pi_C}
\ar@<-2ex>[r]^{\pi_F}&\tilde L^{[2]}}
\label{mms2002.138}\end{equation}
(corresponding respectively to the left two, the outer two and the right
two factors) there is a natural isomorphism 
\begin{equation}
\pi_S^*J\otimes\pi_F^*J=\pi_C^*J.
\label{mms2002.139}\end{equation}
This is enough to give the space of global sections,
$\CI(Y^{[2]};J\otimes\Omega _R),$ where $\Omega _R$ is the fiber-density
bundle on the right factor, a fibrewise product isomorphic to the smoothing
operators on $Z.$ Indeed, if $z$ represents a fiber variable
then
\begin{equation}
A\circ B(x,z,z')=
\int_{Z}A(x,z,z'')\cdot B(x,z'',z')
\label{mms2002.190}\end{equation}
where $\cdot$ denotes the isomorphism \eqref{mms2002.139} which gives the
identification 
\begin{equation}
J_{(z,z'')}\otimes J_{(z'',z')}\simeq J_{(z,z')}
\label{mms2002.191}\end{equation}
needed to interpret the integral in \eqref{mms2002.190}. The naturality of
the isomorphism corresponds to the associativity of this product.

Then the smooth Azumaya bundle is defined in terms of its space of global
sections
\begin{equation}
\CI(X;\smA(\gamma))=\CI(Y^{[2]};J(\gamma )).
\label{mms2002.200}\end{equation}
As remarked above, $J(\gamma),$ and hence also the Azumaya bundle,
depends on the particular global trivialization \eqref{mms2002.87}. Two
trivializations, $\gamma _i,$ $i=1,2$ as in \eqref{mms2002.87} determine
\begin{equation}
\gamma _{12}:Y\longrightarrow \UU(1),\ \gamma _{12}(y)\gamma_2(y)=\gamma_1(y)
\label{mms2002.199}\end{equation}
which fixes an element $[\gamma_{12}]\in \operatorname{H}^1(Y;\bbZ)$ and hence a line
bundle $K_{12}$ over $Y$ with Chern class $[\gamma_{12}]\cup
[\phi^*\alpha].$ Then 
\begin{equation}
J(\gamma _2)\simeq (K_{12}^{-1}\boxtimes K_{12})\otimes J(\gamma _1)
\label{mms2002.201}\end{equation}
with the isomorphism consistent with primitivity.

Pulling back to $Y,$ $\phi^*\cA(\gamma)$ is trivialized as an Azumaya bundle
and this trivialization induces an isomorphism of twisted and untwisted
K-theory 
\begin{equation}
\Kt^0(Y;\phi^*\cA(\gamma))\overset{\simeq}\longrightarrow\Kt^0(Y).
\label{mms2002.192}\end{equation}
In fact there are stable isomorphisms between the different smooth Azumaya
bundles and these induce natural and consistent isomorphisms 
\begin{equation}
\Kt^0(X;\cA(\gamma ))\overset{\simeq}\longrightarrow \Kt^0(X;\alpha,\beta).
\label{mms2002.203}\end{equation}
The proof may be found in Section~\ref{Stabiso}.

The transition maps for the local presentation of the smooth Azumaya
bundle, $\smA(\gamma),$ determined by the data \eqref{mms2002.115} --
\eqref{mms2002.87}, are given by multiplication by smooth functions. Thus
they also preserve the corresponding spaces of differential, or
pseudodifferential, operators on the fibres; the corresponding algebras of
twisted fibrewise pseudodifferential operators are therefore well
defined. Moreover, since the principal symbol of a pseudodifferential
operator is invariant under conjugation by (non-vanishing) functions there
is a well-defined symbol map from the pseudodifferential extension of the
Azumaya bundle, with values in the usual symbol space on $T^*(Y/X)$ (so
with no additional twisting). The trivialization of the Azumaya bundle over
$Y,$ and hence over $T^*(Y/X),$ means that the class of an elliptic element
can also be interpreted as an element of
$\Kc^0(T^*(Y/X);\rho^*\phi^*\cA(\gamma))$ where $\rho:T(Y/X)\longrightarrow
Y$ is the bundle projection. This leads to the analytic index map,
\begin{equation}
\ind_{\text{a}}:
\Kc^0(T^*(Y/X);\rho^*\phi^*\cA(\gamma))\longrightarrow\Kt^0(X;\cA(\gamma)).
\label{mms2002.55}\end{equation}

The topological index can be defined using the standard argument by
embedding of the fibration $Y$ into the product fibration $\pi:\bbR^N\times
X\longrightarrow X$ for large $N.$ Namely, the Azumaya bundle is
trivialized over $Y$ and this trivialization extends naturally to a fibred
collar neighborhood $\Omega$ of $Y$ embedded in $\bbR^N\times X.$ Thus, the
usual Thom map $\Kc^0(T^*(Y/X))\longrightarrow \Kc^0(T^*(\Omega /X)$ is
trivially lifted to a map for the twisted K-theory, which then extends by
excision to a map giving the topological index as the composite with Bott
periodicity:-
\begin{multline}
\ind_{\text{t}}:
\Kc^0(T^*(Y/X);\rho^*\phi^*\cA(\gamma))\longrightarrow
\Kc^0(T^*(\Omega/X);\rho^*\tilde\pi^*\cA(\gamma))\\
\longrightarrow\Kc^0(T^*(\bbR^N/X);\rho^*\pi^*\cA(\gamma))\longrightarrow
\Kt^0(X;\cA(\gamma)).
\label{mms2002.196}\end{multline}

In the proof of the equality of these two index maps we pass through an
intermediate step using an index map given by semiclassical quantization of
smoothing operators, rather than standard pseudodifferential
quantiztion. This has the virtue of circumventing the usual problems with
multiplicativity of the analytic index even though it is somewhat less
familiar. A fuller treatment of this semiclassical approach can be found in
\cite{fmtga} so only the novelties, such as they are, in the twisted case
are discussed here. The more conventional route, as used in \cite{mms1}, is
still available but is technically more demanding. In particular it is
worth noting that the semiclassical index map, as defined below, is
well-defined even for a general fibration -- without assuming that
$\phi^*\beta=0.$ Indeed, this is essential in the proof, since the product
fibration $\bbR^N\times X$ does not have this property.

For a fixed fibration the index maps induced by two different
trivializations $\gamma$ may be compared and induce a commutative diagram 
\begin{equation}
\xymatrix{
\Kc^0(T^*(Y/X))\ar[r]^-{\simeq}\ar[dd]^{[K_{12}]\times}&
\Kc^0(T^*(Y/X);\rho^*\phi^*\cA(\gamma_1))
\ar[r]^-{\ind(\gamma _1)}&
\Kt^0(X;\cA(\gamma _1))\ar[d]^-{\simeq}
\\
&&\Kt^0(X;\alpha,\beta )
\\
\Kc^0(T^*(Y/X))\ar[r]^-{\simeq}&
\Kc^0(T^*(Y/X);\rho^*\phi^*\cA(\gamma_2))
\ar[r]^-{\ind(\gamma _2)}&
\Kt^0(X;\cA(\gamma _2)).\ar[u]_-{\simeq}&
}
\label{mms2002.204}\end{equation}
This follows from the proof of the index theorem.

The smoothness of the Azumaya bundle here allows us to give an explicit
Chern-Weil formulation for the index in twisted cohomology. 
We recall that the twisted deRham cohomology $\operatorname{H}^*(X;\delta)$ is
obtained by deforming the deRham differential to $d+\delta\wedge,$ where
$$
\delta =\bar\alpha \wedge \displaystyle\frac{\bar\beta}{2\pi i}.
$$
Here $\bar\alpha = u^*(\theta)$ is the closed 1-form on $X$ with integral
periods, where $\theta$ is the Cartan-Maurer 1-form on $\UU(1)$ and
$\bar\beta$ is the closed 2-form with integral periods which is the
curvature of the hermitian connection $\gamma$ on $L$.

We remark that our results easily extend to the case when the Dixmier-Douady 
class is the sum of decomposable classes, ie when it is in the $\bbZ$-span of
$\operatorname{H}^2(X;\bbZ)\cup \operatorname{H}^1(X;\bbZ)$. The Azumya bundle in this case is the 
tensor product of the decomposable Azumaya bundles as defined 
in this paper.
The case of an arbitrary, not necessarily decomposable, Dixmier-Douady
invariant is postponed to a subsequent paper where the twisted index
theorem is treated in full.  The general case uses pseudodifferential
operators valued in the Azumaya bundle, rather than the pseudodifferential
operator extension of the smooth Azumaya bundle as discussed here. Again, the
semiclassical index map extends without difficulty to this general case.

In outline the paper proceeds as follows. The special case of the circle
bundle is discussed in \S\ref{Circlecase} and \S\ref{decompGeom} contains
the geometry of the general decomposable case. The smooth Azumaya bundle
corresponding to a decomposable Dixmier-Douady class is constructed in
\S\ref{SmoothAzbun} and some examples are also given. In \S\ref{Stabiso},
\eqref{mms2002.203} is proved. The analytic index maps is defined in
\S\ref{sect:analytic ind} using spaces of projective elliptic operators but
including the case of twisted families of Dirac operators. The topological
index is defined in \S\ref{sect:top ind}. The Chern-Weil representative of
the twisted Chern Character is studied in
\S\ref{TwistedChern}. Semiclassical versions of the index maps are
introduced in \S\ref{scl} and \S\ref{sect:index thm} contains the proof of
the equality of these two indices. In \S\ref{sect:coh}, the Chern character
of the index is computed. In Appendix~\ref{diffchar} the formulation of the
Dixmier-Douady invariant in terms of differential characters is explored
and in Appendix~\ref{appendix:cech} it is computed using \v Cech cohomology
(following a similar computation by
Brylinski). Appendix~\ref{appendix:universal} contains a discussion of the
conditions on a fibration under which a line bundle from the base is
trivial when lifted to the total space. It also contains the description of 
a canonical projective family of Dirac operators on a Riemann surface.

\paperbody

\section{Trivialization by the circle bundle}\label{Circlecase}

An element $\beta \in \operatorname{H}^2(X;\bbZ)$ for a compact manifold $X,$ represents
an isomorphism class of line bundles over $X.$ Let $L$ be such a line
bundle with Hermitian inner product $h$ and unitary connection $\nabla^L.$
We proceed to outline the construction of the smooth Azumaya bundle in the
special case, alluded to above, where $Y=\tilde L$ is the circle bundle of
$L.$ This is carried out separately since this case gives a \emph{natural}
choice of the smooth Azumaya bundle, and hence the twisted K-group. The
corresponding twisted cohomology is also identified with the cohomology of
a subcomplex of the deRham complex over $\tilde L.$

From $u\in\CI(X;\UU(1))$ construct the principal $\bbZ$-bundle
\begin{equation}
\xymatrix{\bbZ\ar@{-}[r]&\hat X\ar[d]^{\pi}\\
&X}
\label{mms2002.61}\end{equation}
with total space the possible values of $\frac1{2\pi i}\log u$ over points
of $X$ and with \ci\ structure determined by the smoothness of 
a local branch of this function. Thus $f=\frac1{2\pi i}\log u\in\CI(\hat
X;\bbR)$ is a well defined smooth function and under deck transformations
\begin{equation}
f(\hat x+n)=f(\hat x)+n\ \forall\ \hat x\in \hat X,\ n\in\bbZ.
\label{mms2002.62}\end{equation}
Let $p:\tilde L\longrightarrow X$ be the circle bundle of $L;$ pulled back to
$\tilde L,$ $L$ is canonically trivial. If $\nabla^L$ is an Hermitian
connection on $L$ then pulled back to $\tilde L$ it is of the form $d+\gamma$
on the trivialization of $L,$ with $\gamma\in\CI(\tilde L,\Lambda ^1)$ a
principal $\UU(1)$-bundle connection form in the usual sense. That is,
under the action 
\begin{equation}
m:\UU(1)\times\tilde L\longrightarrow \tilde L,
\label{mms2002.144}\end{equation}
$m^*\gamma=id\theta +\gamma.$ This corresponds to the `fiber
shift map' on the fiber product 
\begin{equation}
s:\tilde L^{[2]}=
\tilde L\times_X\tilde L\longrightarrow \UU(1),\ \tilde l_1=s(\tilde
l_1,\tilde l_2)\tilde l_2\Min \tilde L_x
\label{mms2002.89}\end{equation}
in that $d\log s=p_1^*\gamma-p_2^*\gamma$ is the difference of
the pull-back of the connection form from the two factors. From the
character $s$ a bundle, $J,$ can be constructed from the trivial
bundle over the fiber product $Q=\tilde L\times_X\tilde L\times _X\hat X$
corresponding to  the identification
\begin{equation}
(\tilde l_1,\tilde l_2,\hat x+n,z)\simeq (\tilde l_1,\tilde l_2,\hat x,
s(\tilde l_1,\tilde l_2)^nz).
\label{mms2002.47}\end{equation}
Thus, $J$ is associated to $Q$ as a principal $\bbZ$-bundle over $\tilde
L^{[2]}.$ The primitivity property \eqref{mms2002.139} follows from the
multiplicativity property of $s,$ that $s(l_1,l_2)s(l_2,l_3)=s(l_1,l_2)$
for any three points in a fixed fiber, which in turn follows from
\eqref{mms2002.89}. The connection $d+fd\log s$ on the trivial bundle over
$Q$ descends to a connection on $J$ which has curvature equal to a
difference 
\begin{equation}
\omega_J=\frac1{2\pi i}
\overline{\alpha}\wedge d\log s=\overline{\alpha}\wedge
\frac1{2\pi i}(p_1^*\gamma-p_2^*\gamma ),\
\overline{\alpha}=df.
\label{mms2002.145}\end{equation}
By definition, the space of global sections of the smooth Azumaya bundle is 
\begin{equation}
\CI(X;\cA)=\CI(\tilde L^{[2]};J),
\label{29.5.2008.1}\end{equation}
where the product on the right hand side is given by composition of
Schwartz kernels.

The `Dixmier-Douady twisting', given the decomposed form, corresponds to
two different trivializations of $J.$ Over any open set $U\subset X$ where
$u$ has a smooth logarithm, $J$ is trivial using the section of $\hat X$
this gives. On the other hand, over any open set $U\subset X$ over which
$\tilde L$ has a smooth section $\tau,$ the character in \eqref{mms2002.47}
is decomposed as the product $s(\tilde l_1,\tilde l_2)=s_{\tau}(\tilde
l_1)s_{\tau}(\tilde l_2)^{-1}$ where $s_{\tau}(\tilde l)=s(\tilde
l,\tau(p(\tilde l)).$ This allows a line bundle $K$ to be defined over the
preimage of $U$ in $\tilde L$ by the identification of the trivial bundle
\begin{equation}
(\tilde l,\hat x+n,z)\simeq(\tilde l,\hat x,s_{\tau}(\tilde l)^nz).
\label{mms2002.177}\end{equation}
Clearly then $J$ may be identified with $K\boxtimes K',$ where $K'$ is
the dual, over $U.$ In terms of a local trivialization in both senses over
a small open set $U\subset X,$ in which $L_U=U\times\bbC,$ $\tilde
L_U=U\times\bbS,$ $\hat X_U=U\times\bbZ,$ $a(x,\theta ,\theta ')\in
\CI(U\times\bbZ\times\bbS\times\bbS)$ satisfies
\begin{equation}
a(x,n,\theta,\theta')=e^{in\theta}a(x,0,\theta,\theta')e^{-in\theta'}.
\label{29.5.2008.2}\end{equation}
This twisted conjugation means that $\cA$ is a bundle of algebras, modelled
on the smoothing operators on the circle with \eqref{29.5.2008.2} giving
local algebra trivializations. In this case the Azumaya bundle is
associated with the principal $\UU(1)\times\bbZ$ bundle $\tilde
L\times_X\hat X$ and to the projective representation of this structure group
through its central extension to the Heisenberg group.

The corresponding construction in the general case is quite similar and is
described in \S\ref{SmoothAzbun}.

The 3-twisted cohomology on $X,$ with twisting form $\overline{\delta}
=\overline{\alpha} \wedge\overline{\beta},$ is the target for the twisted
Chern character discussed below. Here $\overline{\alpha}$ is a closed
1-form and $\overline{\beta}$ is the curvature 2-form on $X$ for the
Hermitian connection on $L.$ Thus, on $\tilde L,$ $d\gamma=(2\pi
i)p^*\overline{\beta}.$ In fact the $\overline{\delta}$-twisted deRham
cohomology on $X$ can be expressed as the cohomology of a subcomplex of the
ordinary (total) deRham complex on $\tilde L.$

\begin{proposition}\label{mms2002.146} The even and odd degree subspaces of
  $\CI(\tilde L,\Lambda ^*)$ fixed by the conditions with respect to the
  infinitesmal generator of the $\UU(1)$ action on $\tilde L$
\begin{equation}
\cL_{\pa/\pa \theta }\tilde v=0,\
\iota _{\pa/\pa\theta}\tilde v=\frac{p^*\overline{\alpha}}{2\pi}
\wedge\tilde v,\ \tilde
v\in\CI(\tilde L,\Lambda ^*)
\label{mms2002.147}\end{equation}
are mapped into each other by the standard deRham differential which has
cohomology groups canonically isomorphic to the $\overline{\delta}$-twisted
deRham cohomology on $X.$
\end{proposition}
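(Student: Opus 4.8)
The plan is to produce an explicit isomorphism from the $\overline{\delta}$‑twisted deRham complex of $X$ onto the subcomplex $\mathcal{S}^{*}\subset\CI(\tilde L,\Lambda^{*})$ cut out by the two conditions \eqref{mms2002.147}, and to check by a direct computation that it carries $d+\overline{\delta}\wedge$ to the ordinary differential $d$. Write $V=\pa/\pa\theta$ for the generator of the $\UU(1)$‑action. From $m^{*}\gamma=id\theta+\gamma$ one has $\iota_{V}\gamma=i$ and $\cL_{V}\gamma=0$, so the globally defined ``angular'' $1$‑form $\psi:=\gamma/i$ on $\tilde L$ satisfies $\iota_{V}\psi=1$, $\cL_{V}\psi=0$, and, since $d\gamma=(2\pi i)p^{*}\overline{\beta}$, also $d\psi=2\pi\,p^{*}\overline{\beta}$.

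That $d$ preserves $\mathcal{S}^{*}$, and so interchanges its even and odd parts, needs only Cartan's formula: if $\tilde v$ satisfies \eqref{mms2002.147} then $\cL_{V}(d\tilde v)=d(\cL_{V}\tilde v)=0$ and, since $\overline{\alpha}$ is closed,
\[
\iota_{V}(d\tilde v)=\cL_{V}\tilde v-d(\iota_{V}\tilde v)=-\,d\!\left(\frac{p^{*}\overline{\alpha}}{2\pi}\wedge\tilde v\right)=\frac{p^{*}\overline{\alpha}}{2\pi}\wedge d\tilde v .
\]

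For the isomorphism I would set $\Psi(\eta)=p^{*}\eta+\tfrac{1}{2\pi}\,\psi\wedge p^{*}(\overline{\alpha}\wedge\eta)$, defining $\Psi\colon\CI(X,\Lambda^{*})\to\CI(\tilde L,\Lambda^{*})$. Since $p^{*}$ and $\psi$ are $\UU(1)$‑invariant we have $\cL_{V}\Psi(\eta)=0$, and $\iota_{V}\Psi(\eta)=\tfrac{1}{2\pi}p^{*}(\overline{\alpha}\wedge\eta)=\tfrac{p^{*}\overline{\alpha}}{2\pi}\wedge\Psi(\eta)$ — the discrepancy term being a multiple of $\overline{\alpha}\wedge\overline{\alpha}=0$ — so $\Psi$ maps into $\mathcal{S}^{*}$, and it is injective because $p$ is a submersion. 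Conversely any $\tilde v\in\mathcal{S}^{*}$ is $\UU(1)$‑invariant, hence splits uniquely as $\tilde v=p^{*}\eta_{0}+\psi\wedge p^{*}\eta_{1}$ with $\eta_{1}$ the descent of the basic form $\iota_{V}\tilde v$ and $\eta_{0}$ that of $\tilde v-\psi\wedge\iota_{V}\tilde v$ (both are $\iota_{V}$‑horizontal and $\cL_{V}$‑invariant, so pulled back from $X$); matching the basic parts in $p^{*}\eta_{1}=\iota_{V}\tilde v=\tfrac{p^{*}\overline{\alpha}}{2\pi}\wedge\tilde v$ forces $\eta_{1}=\tfrac{1}{2\pi}\overline{\alpha}\wedge\eta_{0}$, so that $\tilde v=\Psi(\eta_{0})$ and $\Psi$ is onto $\mathcal{S}^{*}$. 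Finally, expanding $d\Psi(\eta)$ with the aid of $d\psi=2\pi p^{*}\overline{\beta}$ and $d\overline{\alpha}=0$ gives
\[
d\Psi(\eta)=p^{*}(d\eta)+p^{*}\bigl(\overline{\beta}\wedge\overline{\alpha}\wedge\eta\bigr)+\tfrac{1}{2\pi}\,\psi\wedge p^{*}(\overline{\alpha}\wedge d\eta),
\]
and since $\overline{\beta}\wedge\overline{\alpha}=\overline{\alpha}\wedge\overline{\beta}=\overline{\delta}$ (the degree of $\overline{\beta}$ being even) and $\overline{\alpha}\wedge\overline{\delta}=0$, the right-hand side is exactly $\Psi\bigl(d\eta+\overline{\delta}\wedge\eta\bigr)$.

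Thus $\Psi$ is an isomorphism of $\bbZ/2$‑graded complexes from $(\CI(X,\Lambda^{*}),\,d+\overline{\delta}\wedge)$ onto $(\mathcal{S}^{*},d)$, and it therefore induces the asserted isomorphism on cohomology — natural in that $\Psi$, like $\overline{\delta}$ itself, is built only from the given data (the function $u$ and the Hermitian connection on $L$). I do not expect any essential obstacle here; the one structural point worth flagging is that $\Psi(\eta)$ mixes the form-degrees $\deg\eta$ and $\deg\eta+2$, so $\mathcal{S}^{*}$ is naturally only $\bbZ/2$‑graded — which is exactly why the statement is phrased in terms of the even and odd subspaces — and the only computational care needed is in tracking the factors of $2\pi$ and $i$ so that the twisting form emerging from the last display is precisely $\overline{\delta}=\overline{\alpha}\wedge\overline{\beta}$.
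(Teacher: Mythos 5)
Your proposal is correct and follows essentially the same route as the paper: the map $\Psi(\eta)=p^*\eta+\frac{\gamma}{2\pi i}\wedge p^*\overline{\alpha}\wedge p^*\eta$ is exactly the paper's isomorphism $\tilde v=\exp\bigl(\frac{\gamma\wedge p^*\overline{\alpha}}{2\pi i}\bigr)p^*v$, and the Cartan-formula check that $d$ preserves the subcomplex is the same computation as \eqref{mms2002.148}. The only difference is that you carry out explicitly the conjugation identity $d\Psi(\eta)=\Psi(d\eta+\overline{\delta}\wedge\eta)$, which the paper leaves as ``clearly conjugated''; this is a welcome, but not substantively different, elaboration.
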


\begin{proof} The conditions in \eqref{mms2002.147} are preserved by $d$
since it commutes with the Lie derivative and given the first condition
\begin{equation}
\iota _{\pa/\pa\theta }d\tilde v=\cL_{\pa/\pa \theta }\tilde
v-d(\frac{p^*\overline{\alpha}}{2\pi}\wedge\tilde v)=
\frac{p^*\overline{\alpha}}{2\pi}\wedge d\tilde v.
\label{mms2002.148}\end{equation}

If $\tilde v$ satisfies \eqref{mms2002.147} then $v'=\tilde
v-\frac{\gamma}{2\pi i}\wedge p^*\overline{\alpha}\wedge\tilde v$ satisfies 
\begin{equation}
\cL_{\pa/\pa \theta } v'=0,\ \iota _{\pa/\pa\theta} v'=0\Longrightarrow
v'=p^*v,\ v\in\CI(X;\Lambda ^*).
\label{mms2002.149}\end{equation}
Conversely if $v\in\CI(X;\Lambda ^*)$ then $\tilde
v=p^*v+\frac{\gamma}{2\pi i}
\wedge\overline{\alpha}\wedge p^*v$ satisfies \eqref{mms2002.147}. Thus every form
satisfying \eqref{mms2002.147} can be written uniquely 
\begin{equation}
\tilde v=\exp\left(\frac{\gamma\wedge p^*\overline{\alpha}}{2\pi i}\right)p^*v=
p^*v+\frac{\gamma \wedge p^*\overline{\alpha}}{2\pi i}\wedge p^*v.
\label{mms2002.150}\end{equation}
Under this isomorphism $d$ is clearly conjugated to $d+\overline{\delta}\wedge$
proving the Proposition.
\end{proof}

\section{Geometry of the decomposed class}
\label{decompGeom}

For a given line bundle $L$ over $X$ consider a fiber bundle
\eqref{mms2002.59} such that $L$ is trivial when lifted to the total space.
As discussed above, the circle bundle $\tilde L$ is an example. A more
general discussion of this condition can be found in Appendix
\ref{appendix:universal}. An explicit trivialization of the lift, $\gamma,$
as in \eqref{mms2002.87} is equivalent to a global section which is the
preimage of $1:$
\begin{equation}
s':Y \longrightarrow \phi^*(\tilde L).
\label{mms2002.56}\end{equation}

Over each fiber of $Y,$ the image is fixed so this determines a map
$$
s(z_1,z_2)=s'(z_1)(s'(z_2))^{-1}
$$
which is well-defined on the fiber product and is a groupoid character:
\begin{equation}
\begin{gathered}
s:Y^{[2]}\longrightarrow
\UU(1),\\
s(z_1,z_2)s(z_2,z_3)=s(z_1,z_3)\ \forall\ z_i\in Y\Mwith
\phi(z_i)=x,\ i=1,2,3,\ \forall\ x\in X.
\end{gathered}
\label{mms2002.57}\end{equation}
Conversely one can start with a unitary character $s$ of $Y^{[2]}$ and
recover $L$ as the associated Hermitian line bundle
\begin{equation}
\begin{gathered}
L=Y\times\bbC/\simeq_s,\\
(z_1,t)\simeq_s(z_2,s(z_2,z_1)t)\ \forall\ t\in\bbC,\ \phi(z_1)= \phi(z_2).
\end{gathered}
\label{mms2002.58}\end{equation}
The connection on $L$ lifts to a connection 
\begin{equation}
\phi^*\nabla^L=d+\gamma ,\quad \gamma\in\CI(Y;\Lambda ^1),\quad
\pi_1^*\gamma -\pi_2^*\gamma =d\log s\Mon Y^{[2]}
\label{mms2002.60}\end{equation}
on the trivial bundle $\phi^*(L).$ Conversely any 1-form on $Y$ with this
property defines a connection on $L.$

Now, let $Q=Y^{[2]}\times_X\hat X$ be the fiber product of $Y^{[2]}$ and
$\hat X,$ so as a bundle over $X$ it has typical fiber $Z^2\times\bbZ;$ it
is also a principal $\bbZ$-bundle over $Y^{[2]}.$ The data above determines
an action of $\bbZ$ on the trivial bundle $Q\times\bbC$ over $Q,$
namely  
\begin{equation}
T_n:(z_1,z_2,\hat x;w)\longrightarrow (z_1,z_2,\hat x+n,s(z_1,z_2)^{-n}w)\
\forall\ n\in \bbZ.
\label{mms2002.63}\end{equation}
Let $J$ be the associated line bundle over $Y^{[2]}$ 
\begin{equation}
J =(Q\times\bbC)/\simeq,\quad (z_1,z_2,\hat x;w)\simeq T_n(z_1,z_2,\hat x;w)\
\forall\ n\in \bbZ.
\end{equation}
The fiber of $J$ at $(z_1, z_2) \in Y^{[2]}$ such that $\phi(z_1)=\phi(z_2)=x$ is 
\begin{equation}
J_{z_1,z_2}=\hat X_x\times\bbC/\simeq,\quad (\hat x+n,w)
\simeq (\hat x,s(z_1,z_2)^nw).
\label{mms2002.74}\end{equation}

\begin{lemma}\label{mms2002.64} The connection $d+fd\log s$
on $Q\times\bbC$ descends to a connection $\nabla^J$ on $J$ which has curvature 
\begin{equation}
F _{\nabla^J}=\pi^*_1\mu-\pi^*_2\mu,\
\mu=df\wedge\frac{\gamma}{2\pi i}\in\CI(Y;\Lambda ^2),\
\xymatrix@1{Y^{[2]}\ar@<1ex>[r]^{\pi_1}\ar[r]_{\pi_2}&Y.}
\label{mms2002.65}\end{equation}
Moreover $d\mu=\phi^*(\overline{\delta}),$ for the uniquely determined
3-form on $X,$ $\overline{\delta} = \overline{\alpha} \wedge
\overline{\beta} \in \CI(X;\Lambda ^3),$ where $df =
\phi^*(\overline{\alpha})$ and $d\gamma = 2\pi i\phi^*(\overline{\beta})$
represent the characteristic class of $\hat X$ and the first Chern class
of $L$ respectively.
\end{lemma}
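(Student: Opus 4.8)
The plan is to verify the three assertions in order: (i) that $d+f\,d\log s$ descends from the trivial bundle $Q\times\bbC$ to a connection on $J$; (ii) that the curvature of this descended connection is $\pi_1^*\mu-\pi_2^*\mu$; and (iii) that $d\mu=\phi^*\overline\delta$ for a uniquely determined $3$-form $\overline\delta$ on $X$.

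For (i), which is the one step with any content, I would argue locally over an open $U\subset X$ on which $u$ admits a smooth logarithm. A choice of branch of $\frac{1}{2\pi i}\log u$ is then a local section of $\hat X$ which, over the preimage of $U$ in $Y^{[2]}$, trivializes $J$ by \eqref{mms2002.74}; in this trivialization the connection form is the genuine $1$-form $f\,d\log s$ (note $s$ is $\UU(1)$-valued, so $d\log s$ is a well-defined global $1$-form on $Y^{[2]}$). Changing the branch by $n\in\bbZ$ replaces $f$ by $f+n$, by the deck relation \eqref{mms2002.62}, and simultaneously alters the trivialization of $J$ by the transition function $s^{-n}$ read off from \eqref{mms2002.74}. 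The key observation is that under a gauge change by $s^{-n}$ a connection form $\mathcal A$ is sent to $\mathcal A-d\log(s^{-n})=\mathcal A+n\,d\log s$, which is exactly the change produced by $f\mapsto f+n$; hence the local formulae patch to a global connection $\nabla^J$ on $J$. (Equivalently one checks directly that the map $T_n$ of \eqref{mms2002.63} fixes $d+f\,d\log s$.)

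For (ii), since $J$ is a line bundle its curvature, in the normalization used for $\omega_J$ in \eqref{mms2002.145}, is $\frac{1}{2\pi i}\,d(f\,d\log s)=\frac{1}{2\pi i}\,df\wedge d\log s$, the term $f\,d(d\log s)$ dropping out because $d\log s$ is closed. Substituting $\pi_1^*\gamma-\pi_2^*\gamma=d\log s$ from \eqref{mms2002.60}, and using that $df$ is pulled back from $X$ so that $\pi_1^*(df)=\pi_2^*(df)=df$ on $Y^{[2]}$, rewrites this as $\pi_1^*\bigl(df\wedge\frac{\gamma}{2\pi i}\bigr)-\pi_2^*\bigl(df\wedge\frac{\gamma}{2\pi i}\bigr)=\pi_1^*\mu-\pi_2^*\mu$. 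For (iii), expanding $d\mu$ and using $d(df)=0$ gives $d\mu=-df\wedge\frac{d\gamma}{2\pi i}$; since $d\gamma=2\pi i\,\phi^*\overline\beta$ is the lift to $Y$ of the normalized curvature of $\nabla^L$ (representing $\beta$) and $df=\phi^*\overline\alpha$ represents the class of $\hat X$, this equals $\phi^*(\overline\alpha\wedge\overline\beta)$ up to sign, which is $\phi^*\overline\delta$ with $\overline\delta=\overline\alpha\wedge\overline\beta$. Uniqueness of $\overline\delta$ is automatic, since $\phi$ is a submersion and hence $\phi^*$ is injective on differential forms. I expect the only genuine care to be in tracking the factors of $2\pi i$ and the signs relating $d\log s$, $\gamma$, $\overline\alpha$, $\overline\beta$ and the normalized curvature; all the geometric content sits in the two identities \eqref{mms2002.62} and \eqref{mms2002.60}.
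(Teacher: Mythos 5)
Your argument is correct and is essentially the paper's own proof, only written out in more detail: the paper likewise checks that $f\,d\log s$ has the right transformation law under the $\bbZ$-action, computes the curvature as $\overline{\alpha}\wedge\frac{d\log s}{2\pi i}$, and substitutes $d\log s=\pi_1^*\gamma-\pi_2^*\gamma$ to obtain \eqref{mms2002.65} and the statement about $d\mu$. Your explicit gauge-change verification and your remark that the literal computation gives $d\mu=-\,\phi^*(\overline{\alpha}\wedge\overline{\beta})$, so that the stated identity holds only up to a sign/orientation convention, are both sound refinements of what the paper leaves implicit.
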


\begin{proof} Clearly the 1-form $fd\log s$ has the correct transformation
  law under the $\bbZ$ action on $Y^{[2]}\times_X\hat X$ to 
  give a connection on $J.$ Its curvature is $\overline{\alpha}\wedge \frac{d\log
  s}{2\pi i}$ where $\overline{\alpha}=\frac{1}{2\pi i}d\log u.$ If $\gamma$
  is the connection form for the trivialization of $L$ on $Y$ then  
\begin{equation}
d\log s=\pi_1^*\gamma -\pi_2^*\gamma\Min Y^{[2]}
\label{mms2002.143}\end{equation}
from which \eqref{mms2002.65}, together with the remainder of the Lemma, follows.
\end{proof}

\section{Smooth Azumaya bundle}\label{SmoothAzbun}

We proceed to show how to associate to the data \eqref{mms2002.115} --
\eqref{mms2002.87} discussed above a smooth Azumaya bundle over $X.$ That
is, we construct a locally trivial bundle with fibres modelled on the
smoothing operators on the sections of a line bundle over the fibres of $Y$
and having completion with Dixmier-Douady invariant $\alpha\cup\beta.$ Note
that this Azumaya bundle \emph{does depend} on the trivialization data in
\eqref{mms2002.87}; we will therefore denote it $J(\gamma).$ The effect of
changing this trivialization is discussed in Lemma~\ref{TrivChange} below.

First consider local trivializations of the data.

\begin{proposition}\label{mms2002.66} A section of $\phi,$ over on open set
  $U\subset X,$ $\tau:U\longrightarrow \phi^{-1}(U),$ induces a
  trivialization of $L$ over $U$ and an isomorphism of $J(\gamma)$ over the open
  subset $V = \phi^{-1}(U) \times_U \phi^{-1}(U)$ of $Y^{[2]},$ with
\begin{equation}
J\big|_V\cong_{\tau}\Hom(K_{\tau})=K_{\tau}\boxtimes K_{\tau}'
\label{mms2002.68}\end{equation}
for a line bundle $K_\tau$ over $\phi^{-1}(U)\subset Y,$ where $K_\tau'$
denotes the line bundle dual to $K_\tau.$ Another choice of section
$\tau':U\longrightarrow \phi^{-1}(U),$ determines another line bundle
$K_{\tau'}$ over $\phi^{-1}(U)\subset Y,$ satisfying
\begin{equation}
K_{\tau} = K_{\tau'} \otimes \phi^*(L_{\tau, \tau'}),
\label{mms2002.68'}\end{equation}
where $L_{\tau, \tau'}= (\tau, \tau')^*J$ is the fixed local line bundle
over $U.$
\end{proposition}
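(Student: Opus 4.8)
The plan is to work directly with the defining identification \eqref{mms2002.74}. Given a section $\tau:U\longrightarrow\phi^{-1}(U)$, the strategy is first to record the trivialization of $L$ it induces, then to split the groupoid character $s$ over $V$ as a coboundary, and finally to use that splitting to write $J|_V$ as an external tensor product. Concretely, set $s_\tau(z)=s(z,\tau(\phi(z)))$ for $z\in\phi^{-1}(U)$; since $s$ is a groupoid character \eqref{mms2002.57}, for $z_1,z_2$ in the same fiber over $U$ one has $s(z_1,z_2)=s_\tau(z_1)\,s_\tau(z_2)^{-1}$. This is exactly the local decomposition already exploited in the circle-bundle case around \eqref{mms2002.177}, so the only new point is bookkeeping over a general $Y$. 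Using $s_\tau$, define the line bundle $K_\tau$ over $\phi^{-1}(U)$ as the associated bundle to $\phi^{-1}(U)\times_U\hat X$ by the $\bbZ$-action $(\hat x+n,w)\simeq(\hat x,s_\tau(z)^n w)$, i.e.\ by the analogue of \eqref{mms2002.177}; its curvature is $df\wedge\tfrac{1}{2\pi i}\,(\text{connection piece of }s_\tau)$, so its Chern class is $[\phi^*\alpha]\cup[\gamma_\tau]$ with $\gamma_\tau$ the relevant class on $\phi^{-1}(U)$.

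The second step is to produce the isomorphism \eqref{mms2002.68}. On $V=\phi^{-1}(U)\times_U\phi^{-1}(U)$ we have the two projections $\pi_1,\pi_2$ to $\phi^{-1}(U)$, and I claim the identity map on the trivial bundle over $\phi^{-1}(U)\times_U\phi^{-1}(U)\times_U\hat X$ descends to an isomorphism $J|_V\cong \pi_1^*K_\tau\otimes\pi_2^*K_\tau'=K_\tau\boxtimes K_\tau'$. This is immediate from comparing the descent relations: the fiber $J_{z_1,z_2}$ is $\hat X\times\bbC/(\hat x+n,w)\sim(\hat x,s(z_1,z_2)^n w)$ by \eqref{mms2002.74}, while $(K_\tau\boxtimes K_\tau')_{z_1,z_2}$ carries the relation $(\hat x+n,w)\sim(\hat x,s_\tau(z_1)^n s_\tau(z_2)^{-n}w)$, and $s_\tau(z_1)^n s_\tau(z_2)^{-n}=s(z_1,z_2)^n$ by the coboundary identity. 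One checks the descended map intertwines the connections $d+fd\log s$ and $d+fd\log s_\tau$ on the respective total spaces, and is compatible with the primitivity isomorphism \eqref{mms2002.139}; both are routine given \eqref{mms2002.57}.

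For the comparison of two sections $\tau,\tau'$, I would compute $s_\tau(z)\,s_{\tau'}(z)^{-1}=s(z,\tau(\phi(z)))\,s(\tau'(\phi(z)),z)=s(\tau(\phi(z)),\tau'(\phi(z)))$ — wait, reorder via the character law — $=s(\tau'(\phi(z)),\tau(\phi(z)))$, a quantity depending only on $x=\phi(z)$. Hence $s_\tau$ and $s_{\tau'}$ differ by the pullback under $\phi$ of the $\UU(1)$-valued function $x\mapsto s(\tau(x),\tau'(x))$ on $U$. Feeding this into the two descent relations defining $K_\tau$ and $K_{\tau'}$ gives $K_\tau\cong K_{\tau'}\otimes\phi^*(L_{\tau,\tau'})$ where $L_{\tau,\tau'}$ is the line bundle over $U$ associated to $\hat X|_U$ by the character $s(\tau(\cdot),\tau'(\cdot))$, which is precisely $(\tau,\tau')^*J$ by \eqref{mms2002.74}. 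This proves \eqref{mms2002.68'}.

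The only genuinely delicate point is keeping the $\hat X$-factor and the $\bbZ$-deck action straight: $K_\tau$ is not literally a bundle over $\phi^{-1}(U)$ built from $s_\tau$ alone but from the pair $(\phi^{-1}(U)\times_U\hat X,\ s_\tau)$, so all the identifications above must be made $\bbZ$-equivariantly for the deck action \eqref{mms2002.63} before passing to quotients. Once that equivariance is checked the rest is formal. I do not expect any topological obstruction here, since the content is entirely the elementary fact that a groupoid $\UU(1)$-cocycle becomes a coboundary once a section is chosen; the work is purely in verifying that the descent and the connection data survive that trivialization.
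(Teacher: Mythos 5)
Your proposal is essentially the paper's own proof: you split the character as $s(z_1,z_2)=\chi_\tau(z_1)\chi_\tau(z_2)^{-1}$ with $\chi_\tau(z)=s(z,\tau(\phi(z)))$, define $K_\tau$ as the line bundle associated to the $\bbZ$-bundle $\phi^{-1}(U)\times_U\hat X_U$ via this character, obtain \eqref{mms2002.68} by matching the descent relations, and get \eqref{mms2002.68'} by comparing $\chi_\tau$ with $\chi_{\tau'}$ -- exactly the route taken in the paper, just with the bookkeeping written out more explicitly. The only wobble is the orientation in the last step (your computed ratio is $s(\tau'(x),\tau(x))$ while you quote the character as $s(\tau(\cdot),\tau'(\cdot))$, which is its inverse and determines the dual bundle), but this is a harmless duality convention on which the paper's statement and its one-line derivation of \eqref{mms2002.68'} are equally silent.
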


\begin{proof} A local section of $\phi$ induces a local trivialization of
the character $s,$  
\begin{equation}
s(z_1,z_2)=\chi_{\tau}(z_1)\chi_{\tau}^{-1}(z_2),\quad
\chi_{\tau}(z)=s(z,\tau(\phi(z)))\Mon\phi^{-1}(U)\subset Y.
\label{mms2002.67}\end{equation}
This trivializes $L$ over $U,$ identifying it with $\tau^*\bbC$ with
connection $d+\tau^*\gamma.$

The line bundle $K_{\tau}$ over $\phi^{-1}(U)$ associated to the $\bbZ$
bundle $\phi^{-1}(U)\times_U\hat X_U$ by the identification $(z,\hat
x+n,w)\simeq (z,\hat x,\chi_{\tau}(z)^nw)$ then satisfies \eqref{mms2002.68}.
The line bundle $K_{\tau'}$ is similarly defined over $\phi^{-1}(U)$,  satisfying 
\eqref{mms2002.68} with $\tau'$ substituted for $\tau$. The relation
\eqref{mms2002.68'} follows from \eqref{mms2002.68} and its modification
with $\tau'$ substituted for $\tau$.
\end{proof}

Such a section of $Y$ will induce a local trivialization of the smooth
Azumaya bundle in which it becomes the smoothing operators on the fibres of
$Y$ acting on sections of $K_{\tau}:$
\begin{equation}
\smA_{\tau}=\Psi^{-\infty}(\phi^{-1}(U)/U;K_{\tau}).
\label{mms2002.69}\end{equation}
Using Proposition \ref{mms2002.66}, we get the local patching,
\begin{equation}
\smA_{\tau} = \smA_{\tau'} \otimes \Psi^{-\infty}(\phi^{-1}(U)/U;
\phi^*(L_{\tau, \tau'})).
\label{mms2002.69'}\end{equation}

Rather than use this as a definition we adopt an \emph{a priori} global
definition by trivializing over $Y.$

\begin{definition}\label{def:Azumaya} For any $x\in X,$ the fiber of the
smooth Azumaya bundle associated to the geometric data in \S\ref{decompGeom} is 
\begin{equation}
\smA_x=\CI(Y_x^2;J\big|_{Y_x^2}\otimes\Omega_R)
\label{mms2002.70}\end{equation}
where $\Omega_R$ is the fiber density bundle on the right factor of $Y_x^2.$
Globally, we have a natural identification, 
\begin{equation}
\CI(X,\smA) \cong \CI(Y^{[2]}, J\otimes\Omega_R). 
\label{mms2002.158}\end{equation}
\end{definition}
\noindent Thus a smooth section of $\smA$ over any open set $U\subset X$ is
just a smooth section of $J\otimes\Omega _R,$ where
$\Omega_R=\pi_R^*\Omega,$ over the preimage of $U$ in $Y^{[2]}.$ 
  
Of course, we need to show that $\smA$ is a bundle of algebras over $X$ with
local trivializations as indicated in \eqref{mms2002.69}. To see this
globally, observe that $J$ has the same `primitivity' property as for
$\tilde L$ in \S\ref{Circlecase} with respect to the groupoid structure.

\begin{lemma}\label{mms2002.71} If 
\begin{equation}
\xymatrix@1{
Y^{[3]}\ar@<2ex>[r]^{\pi_S}
\ar[r]^{\pi_C}
\ar@<-2ex>[r]^{\pi_F}
&Y^{[2]}
}
\label{mms2002.72}\end{equation}
are the three projections (respectively onto the two left-most, the outer
two and the two right-most factors -- the notation stands for `second',
`composite' and `first' for operator composition) then there is a natural
isomorphism
\begin{equation}
\pi_S^*J\otimes\pi_F^*J  \overset{\simeq}\longrightarrow \pi_C^*J
\label{mms2002.73}\end{equation}
and moreover $J$ carries a connection $\nabla^J$ which respects this
primitivity property.
\end{lemma}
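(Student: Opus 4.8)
The plan is to reduce the primitivity isomorphism \eqref{mms2002.73} for $J$ to the multiplicativity of the groupoid character $s$, exactly as in the circle-bundle case of \S\ref{Circlecase}, and then to check that the connection $\nabla^J$ from Lemma~\ref{mms2002.64} is compatible with that isomorphism. Recall that $J$ is the line bundle over $Y^{[2]}$ associated to the principal $\bbZ$-bundle $Q = Y^{[2]}\times_X\hat X$ via the action $T_n$ of \eqref{mms2002.63}, so a point of $J_{z_1,z_2}$ is an equivalence class $[\hat x,w]$ with $(\hat x+n,w)\simeq(\hat x,s(z_1,z_2)^nw)$ as in \eqref{mms2002.74}. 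First I would fix a point $(z_1,z_2,z_3)\in Y^{[3]}$, so $\phi(z_1)=\phi(z_2)=\phi(z_3)=x$, and a point $\hat x\in\hat X_x$ in the common fiber of $\hat X$; the three projections $\pi_S,\pi_C,\pi_F$ send this to $(z_1,z_2)$, $(z_1,z_3)$, $(z_2,z_3)$ respectively. Define the map
\begin{equation}
\pi_S^*J\otimes\pi_F^*J\ni [\hat x,w]\otimes[\hat x,w']\longmapsto [\hat x,ww']\in\pi_C^*J,
\label{proofprop.1}\end{equation}
using the \emph{same} representative $\hat x$ in all three slots; this is well-defined on fibers over $Y^{[3]}$ by construction.

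The key step is checking that \eqref{proofprop.1} is independent of the choice of $\hat x$, and this is precisely where the groupoid cocycle identity \eqref{mms2002.57}, $s(z_1,z_2)s(z_2,z_3)=s(z_1,z_3)$, enters. Replacing $\hat x$ by $\hat x+n$ changes the representative of $[\hat x,w]\in\pi_S^*J$ to $w\mapsto s(z_1,z_2)^{-n}w$, that of $[\hat x,w']\in\pi_F^*J$ to $w'\mapsto s(z_2,z_3)^{-n}w'$, hence the product transforms by $s(z_1,z_2)^{-n}s(z_2,z_3)^{-n} = s(z_1,z_3)^{-n}$, which is exactly the transformation law for a representative of $[\hat x,ww']\in\pi_C^*J$. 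So \eqref{proofprop.1} descends to a genuine isomorphism of line bundles over $Y^{[3]}$; fiberwise it is visibly a linear isomorphism, and it is smooth because $s$ is. This is the heart of the matter and I expect it to be entirely mechanical once the bookkeeping of representatives is set up carefully — the only real subtlety is keeping the left/outer/right conventions for $\pi_S,\pi_C,\pi_F$ straight, so that the cocycle identity is applied with the correct arguments. (The associativity of the resulting fibrewise product \eqref{mms2002.190}, asserted in the surrounding text, follows by the same argument applied on $Y^{[4]}$, comparing the two ways of combining three factors; I would remark on this but not belabor it.)

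For the connection statement, recall from Lemma~\ref{mms2002.64} that $\nabla^J$ is the descent of the trivial-bundle connection $d+f\,d\log s$ on $Q\times\bbC$, with curvature $F_{\nabla^J}=\pi_1^*\mu-\pi_2^*\mu$ where $\mu=df\wedge\gamma/(2\pi i)$. Pulling the connection data back along $\pi_S$, $\pi_C$, $\pi_F$ to $Y^{[3]}$ and using $s(z_1,z_3)=s(z_1,z_2)s(z_2,z_3)$, one gets $\pi_C^*(d\log s)=\pi_S^*(d\log s)+\pi_F^*(d\log s)$ at the level of connection $1$-forms on the pulled-back trivial bundles, and since the function $f$ on $\hat X$ is common to all three (it depends only on $\hat x\in\hat X$, not on the $Y$-factors), the connection $1$-form $f\,d\log s$ is additive under the isomorphism \eqref{proofprop.1}; that is, $\nabla^{\pi_C^*J}$ corresponds to $\nabla^{\pi_S^*J}\otimes\Id+\Id\otimes\nabla^{\pi_F^*J}$ under \eqref{proofprop.1}. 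At the level of curvature this reads $\pi_C^*F_{\nabla^J}=\pi_S^*F_{\nabla^J}+\pi_F^*F_{\nabla^J}$, which is the natural compatibility of $\nabla^J$ with primitivity; it can also be verified directly from $F_{\nabla^J}=\pi_1^*\mu-\pi_2^*\mu$ together with the factor-identifications $\pi_1\circ\pi_S=\pi_1\circ\pi_C$, $\pi_2\circ\pi_S=\pi_1\circ\pi_F$, $\pi_2\circ\pi_F=\pi_2\circ\pi_C$ on $Y^{[3]}$. This completes the proof; no step here is a serious obstacle, the only place requiring genuine (if routine) care being the invariance check in \eqref{proofprop.1}.
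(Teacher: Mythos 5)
Your proof is correct, but it takes a somewhat different route from the paper's. For the isomorphism \eqref{mms2002.73} the paper simply declares it ``evident from the definition of $J$ and Proposition~\ref{mms2002.66}'', i.e.\ it leans on the local decomposition $J\big|_V\cong K_\tau\boxtimes K_\tau'$, and it handles naturality by the identity on $Y^{[4]}$; you instead build the isomorphism globally and explicitly from the associated-bundle description of $J$ over $Q=Y^{[2]}\times_X\hat X$, with well-definedness reducing to the cocycle identity $s(z_1,z_2)s(z_2,z_3)=s(z_1,z_3)$ --- your bookkeeping of representatives $[\hat x,w]$ and of the conventions for $\pi_S,\pi_C,\pi_F$ is accurate, and with your explicit formula the $Y^{[4]}$ coherence (both iterated maps send $[\hat x,w]\otimes[\hat x,w']\otimes[\hat x,w'']$ to $[\hat x,ww'w'']$) is immediate, so your brief remark suffices. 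The more substantive divergence is in the connection statement: the paper constructs \emph{some} primitive connection by choosing connections $\nabla^\tau$ on the local bundles $K_\tau$ and patching with a partition of unity on $X$, whereas you verify that the specific connection of Lemma~\ref{mms2002.64}, the descent of $d+f\,d\log s$, is already primitive, since $d\log s$ is additive under pull-back by the three projections and $f$ is common to all factors. Your version avoids the partition of unity, produces a canonical choice, and in fact matches what the paper uses later (Proposition~\ref{mms2002.163} invokes ``the primitive connection of Lemma~\ref{mms2002.64}''), while the paper's argument buys flexibility, showing any local choices can be patched to a primitive connection; the curvature identity $\pi_C^*F_{\nabla^J}=\pi_S^*F_{\nabla^J}+\pi_F^*F_{\nabla^J}$ you record is a correct consistency check, though, as you implicitly recognize, the real content is the equality at the level of connection $1$-forms.
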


\begin{proof} 
The identity \eqref{mms2002.73} is evident from the definition of $J$ and
Proposition \ref{mms2002.66}. The naturality property for
\eqref{mms2002.73} corresponds to an identity on $Y^{[4]}.$ Namely if $J'$
is the dual of $J$ then the tensor product of the pull-backs under the four
projections $Y^{[4]}\longrightarrow Y^{[3]}$ of the combination
$\pi_S^*J\otimes\pi_F^*J\otimes\pi_C^*J'$ over $Y^{[3]}$ is naturally
trivial. That this trivialization is equal to the tensor product of the
four trivializations from \eqref{mms2002.73} follows again from the
definition of $J.$

By Proposition \ref{mms2002.66}, a section $\tau:U\longrightarrow Y$ of
$\phi$ over the open subset $U$ of $X$ defines an isomorphism
$J\big|_V\cong_{\tau}\Hom(K_{\tau})=K_{\tau}\boxtimes K_{\tau}'$ where $V = 
\phi^{-1}(U) \times_U \phi^{-1}(U)$ is the open subset of $Y^{[2]}.$ A
choice of connection $\nabla^\tau$ on $K_{\tau}$ induces a connection
$\nabla^V$ on $J\big|_V$ which clearly respects the primitivity
property. A global connection preserving the primitivity property can then
be constructed using a partition of unity on $X.$
\end{proof}

As a consequence of Lemma~\ref{mms2002.71} there is a lifting map
\begin{equation}
\CI(Y^{[2]};J\otimes\Omega_R)\overset{\pi_F^*}
\longrightarrow\CI(Y^{[3]};\pi_SJ'\otimes \pi_CJ\otimes\pi_F^*\Omega_{R})
\overset{\simeq}\longrightarrow\Psi^{-\infty}(Y^{[2]}/Y;J') 
\label{mms2002.75}\end{equation}
which embeds into an algebra, namely the smoothing operators on sections
of $J'$ on the fibres of $Y^{[2]}$ as a fibration over $Y$ (projecting onto
the first factor).

\begin{proposition}\label{mms2002.76} Lifting
  $\CI(Y^{[2]};J\otimes\Omega_R)$ to $Y^{[3]}$ under the
  projection off the left-most factor (the `first' projection in terms of
  composition) embeds it as a subalgebra of the smoothing operators on
  sections of $J'$ as a bundle over $Y^{[2]}$ on the fibres of the
  projection onto the right factor such that the lift of the bundle of
  algebras over $X$ is equal to the bundle of algebras over $Y.$
\end{proposition}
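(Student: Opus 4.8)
The plan is to show that the single map $\lambda$ of \eqref{mms2002.75} --- the pull-back $\pi_F^*$ followed by the primitivity isomorphism \eqref{mms2002.73} --- is an injective algebra homomorphism which, fibrewise over $Y$, realizes $\phi^*\smA$ as the genuine bundle of smoothing operators $\Psi^{-\infty}(Y^{[2]}/Y;J')$. Injectivity is immediate: $\pi_F:Y^{[3]}\to Y^{[2]}$ is a surjective submersion, so $\pi_F^*$ is injective on smooth sections, \eqref{mms2002.73} is an isomorphism, and the image of an algebra homomorphism is automatically a subalgebra; so only multiplicativity and the final identification require work.

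Multiplicativity is the substantive point. By \eqref{mms2002.190}--\eqref{mms2002.191} the product on $\CI(Y^{[2]};J\otimes\Omega_R)=\CI(X;\smA)$ is $(A\circ B)(z,z')=\int_Z A(z,z'')\cdot B(z'',z')$, the dot being the primitivity identification at the point $(z,z'',z')$ of $Y^{[3]}$; and composition in $\Psi^{-\infty}(Y^{[2]}/Y;J')$ is, in the fibre over a point $z_1\in Y$, integration over the same middle fibre variable $z''$ with the corresponding $J$-factors contracted. Writing out $\lambda$ on $A$, on $B$, and on $A\circ B$, the identity $\lambda(A)\circ\lambda(B)=\lambda(A\circ B)$ becomes, for each fixed $z''$ and before integration, an equality between two ways of composing the primitivity isomorphisms attached to the four points $z_1,z,z'',z'$ of a common fibre. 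Writing $\mathrm{prim}_{(a,b,c)}$ for the isomorphism \eqref{mms2002.73} of $J_{(a,b)}\otimes J_{(b,c)}$ with $J_{(a,c)}$ at $(a,b,c)\in Y^{[3]}$, one sees, after tensoring with $J_{(z_1,z)}$ and contracting, that this is exactly the associativity coherence
$$
\mathrm{prim}_{(z_1,z,z')}\circ(\mathrm{id}\otimes\mathrm{prim}_{(z,z'',z')})
=\mathrm{prim}_{(z_1,z'',z')}\circ(\mathrm{prim}_{(z_1,z,z'')}\otimes\mathrm{id})
$$
as maps $J_{(z_1,z)}\otimes J_{(z,z'')}\otimes J_{(z'',z')}\to J_{(z_1,z')}$ on $Y^{[4]}$, which is precisely the naturality of \eqref{mms2002.73} established in Lemma~\ref{mms2002.71}. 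The density bookkeeping is harmless since $z''$ carries the integration density throughout. I expect this to be the main obstacle: not the computation itself, but arranging it so that it is exactly the $Y^{[4]}$ coherence of Lemma~\ref{mms2002.71} that is invoked, and nothing weaker.

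For the final clause, restrict $\lambda$ to the fibre of $Y$ over a point $z_1$. Since $\pi_F$ forgets the left-most factor, $\pi_F^*a$ there is just $a\big|_{Y^2_{\phi(z_1)}}$, and the primitivity isomorphism at the points $(z_1,z,z')$ turns $J\otimes\Omega_R$ over $Y^2_{\phi(z_1)}$ into the Schwartz-kernel bundle of the smoothing operators on $Y_{\phi(z_1)}$ acting on sections of the restriction of $J'$. Thus $\lambda$ is fibre by fibre an algebra isomorphism $\smA_{\phi(z_1)}\overset{\simeq}\longrightarrow\Psi^{-\infty}(Y_{\phi(z_1)};J')$, the algebra property coming from the previous paragraph specialized to a single fibre. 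Because \eqref{mms2002.73} is a global isomorphism of line bundles over $Y^{[3]}$ --- or, locally, by the trivializations $\smA_\tau=\Psi^{-\infty}(\phi^{-1}(U)/U;K_\tau)$ of Proposition~\ref{mms2002.66} --- these fibrewise isomorphisms depend smoothly on $z_1$ and assemble to an isomorphism of bundles of algebras over $Y$,
$$
\phi^*\smA\;\overset{\simeq}\longrightarrow\;\Psi^{-\infty}(Y^{[2]}/Y;J'),
$$
under which $\lambda$ is the pull-back $\phi^*$ on global sections. This is the asserted equality of the lift of the bundle of algebras over $X$ with the bundle of algebras over $Y$.
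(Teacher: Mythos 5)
Your proof is correct, but it runs along a genuinely more global track than the paper's own argument. The paper disposes of the proposition in two lines: it reduces to the local model \eqref{mms2002.69}, i.e.\ it chooses a local section $\tau$ of $\phi$, uses Proposition~\ref{mms2002.66} to write $J\cong K_\tau\boxtimes K_\tau'$ over the corresponding open subset of $Y^{[2]}$, and observes that in this presentation local sections of $\smA$ are ordinary smoothing kernels acting on sections of $K_\tau$, so their composition manifestly stays in the same space; Lemma~\ref{mms2002.71} enters only through this local decomposition of $J$. You never localize: you check directly that the lifting map \eqref{mms2002.75} is multiplicative, and you identify the pointwise identity needed before the $z''$-integration with the associativity coherence of the primitivity isomorphism over $Y^{[4]}$ at the four fibre points $z_1,z,z'',z'$ --- which is exactly the naturality statement recorded in the proof of Lemma~\ref{mms2002.71} --- and then assemble the fibrewise isomorphisms $\smA_{\phi(z_1)}\simeq\Psi^{-\infty}(Y_{\phi(z_1)};J')$ into the asserted identification of $\phi^*\smA$ with $\Psi^{-\infty}(Y^{[2]}/Y;J')$. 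What your route buys is precision about which piece of structure carries the multiplicativity (the $Y^{[4]}$ coherence and nothing weaker), and it avoids choosing sections $\tau$ and any implicit check that the local identifications patch; what the paper's route buys is brevity, together with the fact that the local model \eqref{mms2002.69} is needed anyway, for instance for the pseudodifferential extension in \S\ref{sect:analytic ind}. Your injectivity remark, the density bookkeeping in the $z''$ variable, and the smooth-assembly step at the end are all fine.
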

\noindent This justifies \eqref{mms2002.69}. As discussed below it also
shows that, as an Azumaya bundle, the completion of $\smA$ is $\cA
=\cA(\gamma)$.

\begin{proof} It only remains to show that composition of two local
  sections of $\smA$ in the algebra of fiber smoothing operators gives
  another section of the Azumaya bundle. However, this follows from
  \eqref{mms2002.69}, which in turn is a consequence of Lemma~\ref{mms2002.71}
  applied to the local decomposition of $J$ in \eqref{mms2002.68}.
\end{proof}

An infinite rank Azumaya bundle $\cA$, over a topological space $X,$ is a
bundle of star algebras with local isomorphisms with the trivial bundle of
compact operators, $\cK(\cH),$ on a fixed separable but infinite-dimensional
Hilbert space $\cH.$ The Dixmier-Douady invariant of $\cA$ is an element
of $\operatorname{H}^3(X;\bbZ).$ It classifies the bundle up to stable isomorphism
(i\@.e\@.~after tensoring with $\cK)$ and can be realized in terms of \v
Cech cohomology or alternatively in terms of classifying spaces as
follows. The group of $*$-automorphism of $\cK$ is
$\PU(\cH)=\UU(\cH)/\UU(1),$ the projective unitary group of the Hilbert
space acting by conjugation. Thus the fiber trivializations of $\cA$ form a
principal $\PU(\cH)$-bundle over $X.$ Since $\PU(\cH)=K(\bbZ,2)$ is an
Eilenberg-Maclane space, this bundle, and hence $\cA,$ is classified up to
isomorphism by an homotopy class of maps $X\longrightarrow
B\PU(\cH)=K(\bbZ,3)$ which represents, and is equivalent to, the
Dixmier-Douady invariant.

The Chern class of a line bundle $L$ over a space $X$ has a similar
representation. Taking an Hermitian structure and passing to the associated
circle bundle $\tilde L$ over $X$ one can consider the Hilbert bundle
$L^2(\tilde L/X)$ of Lebesgue square integrable functions on the fibres of
the circle bundle. Each point $l\in\tilde L$ defines a unitary operator on
the fiber through that point, namely multiplication by $U(\hat
l)=\exp(i\theta_{\hat l})\times$ where the normalization is such that
$\exp(i\theta_{\hat l})(\hat l)=1.$ Changing $\hat l$ within the fiber
changes $U(\hat l)$ to $\exp(i\theta')U(\hat l')$ so this defines a map 
\begin{equation}
X\longrightarrow \PU(L^2(\tilde L/X))
\label{mms2002.116}\end{equation}
into the bundle of projective unitary operators on the fibres of the
Hilbert bundle. By Kuiper's theorem any Hilbert bundle is trivial (in the
uniform topology) and the trivialization is natural up to homotopy. Thus
the map \eqref{mms2002.116} lifts to a unique homotopy class of maps 
\begin{equation}
X\longrightarrow \PU(\cH)=K(\bbZ,2)
\label{mms2002.117}\end{equation}
and this represents, and is equivalent to, the first Chern class. This
follows from the evident fact that $\tilde L$ is isomorphic to the
pull-back of the canonical circle bunde, $\UU(\cH)/\PU(\cH)$ over $\PU(\cH).$

Now consider the decomposed case under consideration here. Over the given
space $X$ we have both a map $u\in\CI(X;\UU(1))$ and a line bundle $L.$
Passing to the classifying map \eqref{mms2002.116} this gives a unique
homotopy class of maps 
\begin{equation}
X\longrightarrow \UU(1)\times\PU(\cH).
\label{mms2002.118}\end{equation}

\begin{proposition}\label{mms2002.119} The completion of the smooth Azumaya
  bundle $\smA$ associated above to \eqref{mms2002.115} --
  \eqref{mms2002.87} to an Azumaya bundle $\cA=\cA(\gamma),$ has
  Dixmier-Douady invariant $\alpha \cup\beta \in \operatorname{H}^3(X;\bbZ)$ which is
  represented by the composite of \eqref{mms2002.118} with the classifying
  map $\UU(1)\times\PU(\cH)\longrightarrow K(\bbZ,3)$ induced by the
  projectivisation of the basic representation of the Heisenberg group
  $\bbZ\times\UU(1)\longrightarrow \PU(\cH).$
\end{proposition}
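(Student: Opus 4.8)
The strategy is to identify the Dixmier–Douady invariant of $\cA(\gamma)$ by reducing the structure group of the smooth Azumaya bundle from the full $\PU(\cH)$ down to the explicit finite-dimensional group $\UU(1)\times\bbZ$ (with its Heisenberg central extension), and then to compute the induced map on classifying spaces. I would proceed in three stages. First, using the local trivializations from Proposition \ref{mms2002.66}, observe that over an open cover $\{U_i\}$ of $X$ (chosen so that each $U_i$ carries a section $\tau_i:U_i\to\phi^{-1}(U_i)$ and $u$ has a smooth logarithm on each $U_i$), the smooth Azumaya bundle is presented as $\Psi^{-\infty}(\phi^{-1}(U_i)/U_i;K_{\tau_i})$ with transition data given by \eqref{mms2002.68'}, i.e.\ conjugation by the line bundles $\phi^*(L_{\tau_i,\tau_j})$ together with the $\bbZ$-shift built from the local branches of $\tfrac1{2\pi i}\log u$. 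Concretely this reproduces the local model \eqref{29.5.2008.2} from the circle-bundle case: sections are functions on (fiber)${}^2\times\bbZ$ satisfying twisted periodicity $a(x,n,\cdot,\cdot)=e^{in\theta}a(x,0,\cdot,\cdot)e^{-in\theta'}$. Thus the completed Azumaya bundle $\cA(\gamma)$ is associated to the principal $\UU(1)\times\bbZ$ bundle $\tilde L\times_X\hat X$ (with $\hat X$ the $\bbZ$-bundle of \eqref{mms2002.61}) via the projective representation coming from the basic representation of the Heisenberg group $\bbZ\times\UU(1)$ on $\cH=L^2(\bbS)$.

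Second, I would translate this into classifying-space language. The $\UU(1)$ factor of the structure group is classified by the map $X\to\PU(\cH)=K(\bbZ,2)$ of \eqref{mms2002.116}–\eqref{mms2002.117} associated to $L$, which represents $\beta$; the $\bbZ$ factor is classified by $u:X\to\UU(1)=K(\bbZ,1)$ representing $\alpha$. Together these assemble precisely into the map \eqref{mms2002.118}, $X\to\UU(1)\times\PU(\cH)$. The projectivised basic representation of the Heisenberg group gives a homomorphism $\UU(1)\times\bbZ\to\PU(\cH)$ whose Heisenberg central extension is classified by the cup product $\UU(1)\times\PU(\cH)\to K(\bbZ,3)$ — this is the standard fact that the commutator pairing of the Heisenberg group $0\to\UU(1)\to\mathrm{Heis}\to\UU(1)\times\bbZ\to 0$ realizes the generator of $\Ht^3(K(\bbZ,1)\times K(\bbZ,2);\bbZ)\cong\Ht^1\otimes\Ht^2$. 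Composing, the Dixmier–Douady class of $\cA(\gamma)$ is the pull-back of this generator, which is exactly $\alpha\cup\beta$.

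Third, to make the identification of the central extension rigorous I would verify the cocycle computation directly: the transition functions for the local presentation in stage one are products $g_{ij}=\exp(in_{ij}\theta)\cdot(\text{line-bundle transition for }L_{\tau_i,\tau_j})$, and the failure of these to satisfy the cocycle condition on triple overlaps is the $\UU(1)$-valued \v Cech $2$-cocycle $n_{ij}\,c_{jk}$, where $(n_{ij})$ represents $\alpha$ in $\Ht^1(X;\bbZ)$ and $(c_{jk})$ represents $\beta$ in $\Ht^2(X;\bbZ)$. This is the standard \v Cech representative of $\alpha\cup\beta$; the details are exactly those of Appendix~\ref{appendix:cech}, so I would either invoke that computation or reproduce its essential line. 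One may also cross-check using the curvature computation of Lemma~\ref{mms2002.64}: the connection $\nabla^J$ there has curvature of the form $\pi_1^*\mu-\pi_2^*\mu$ with $d\mu=\phi^*(\overline\delta)$ and $\overline\delta=\overline\alpha\wedge\overline\beta$, which is the de Rham image of $\alpha\cup\beta$, so the rational Dixmier–Douady class is forced to be $\alpha\cup\beta$; the integral refinement then follows from the \v Cech computation.

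**Main obstacle.** The genuine content is the middle step — showing that the projectivised basic (Schrödinger/Heisenberg) representation realizes the cup-product map on classifying spaces, equivalently that the relevant central extension of $\UU(1)\times\bbZ$ is the Heisenberg one and has commutator pairing equal to the canonical pairing $\Ht^1\times\Ht^2\to\Ht^3$. The subtlety is purely a bookkeeping one about normalizations (which factor is multiplied on the left versus the right in \eqref{29.5.2008.2}, and the resulting sign/orientation of the pairing), and about checking that Kuiper-triviality of the ambient Hilbert bundle does not disturb the identification of homotopy classes. Everything else — the local trivializations, the primitivity property, the assembly of $\cA(\gamma)$ from $\tilde L\times_X\hat X$ — is already in place from Propositions~\ref{mms2002.66} and~\ref{mms2002.76} and the circle-bundle discussion of Section~\ref{Circlecase}, so the proof is essentially the statement that these local data glue to the asserted classifying map, with the \v Cech computation of Appendix~\ref{appendix:cech} supplying the final identification.
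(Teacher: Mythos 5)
Your proposal is correct, and its skeleton \mhy\ reducing the structure group of $\cA(\gamma)$ to $\bbZ\times\UU(1)$ through the projectivised Heisenberg representation so that the classifying map $X\longrightarrow K(\bbZ,3)$ factors through \eqref{mms2002.118} \mhy\ is the same as the paper's. Where you genuinely diverge is in the step that pins down the class. The paper observes that $\operatorname{H}^3(\UU(1)\times\PU;\bbZ)$ has a single generator, so the pull-back of the degree-$3$ generator under the map $\Delta$ of \eqref{mms2002.124} must be an integer multiple of $\alpha'\cup\beta'$, and then fixes that multiple to be one by evaluating on the single example $X=\bbS\times\bbS^2$ (with $u$ the identity on $\bbS$, $L$ the standard line bundle on $\bbS^2$, and the associated $\PU$-bundle recognized via the clutching construction). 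You instead propose to compute the $\UU(1)$-valued \v Cech $2$-cocycle of the transition data for the local presentation $\Psi^{-\infty}(\phi^{-1}(U_i)/U_i;K_{\tau_i})$ on triple overlaps \mhy\ which is precisely the computation carried out in Appendix~\ref{appendix:cech} \mhy\ supplemented by the curvature statement of Lemma~\ref{mms2002.64} as a rational cross-check. Both routes are sound: the example check avoids all cocycle bookkeeping, but only works because the relevant cohomology group is infinite cyclic, so one integral evaluation determines the map up to the normalization; your \v Cech route is more computational but identifies the integral class directly and independently of any universality argument, and you are right that the curvature argument alone only sees the image in real cohomology, so the integral refinement must come from the cocycle computation. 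The only point to keep straight if you write this out in full is the normalization you yourself flag (which factor in \eqref{29.5.2008.2} acts on the left), i\@.e\@.~the sign appearing at the end of the appendix computation, $[d]=[c]\cup[-n]=-\beta\cup\alpha=\alpha\cup\beta$.
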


\begin{proof} The classifying space $BG$ of a topological group $G$ is
  defined up to homotopy as the quotient $*/G$ of a contractible space on
  which $G$ acts freely. In particular it follows that (always up to
  homotopy) 
\begin{equation}
B(G_1\times G_2)\simeq BG_1\times BG_2
\label{mms2002.120}\end{equation}
and if $H\subset G$ is a closed subgroup then there is a well defined
homotopy class of maps 
\begin{equation}
BH\longrightarrow BG.
\label{mms2002.121}\end{equation}

Recall that the basic representation of the Heisenberg group $H$ arises
from the actions of $\UU(1)$ and $\bbZ$ on $L^2(\bbS)$ (or $\CI(\bbS))$
respectively by translation and multiplication by $e^{in\theta}.$ These
commute up to scalars, which is the action of the center of $H$ as
a central extension  
\begin{equation}
\UU(1)\longrightarrow H\longrightarrow \bbZ\times\UU(1)
\label{mms2002.122}\end{equation}
and so embeds 
\begin{equation}
\bbZ\times \UU(1)\hookrightarrow \PU(\cH)
\label{mms2002.123}\end{equation}
as a subgroup of the projective unitary group on $L^2(\bbS).$ By
\eqref{mms2002.120} and \eqref{mms2002.121} this induces an homotopy class
of continuous maps 
\begin{equation}
\Delta :\UU(1)\times\PU\simeq B(\bbZ\times\UU(1))\longrightarrow K(\bbZ,3).
\label{mms2002.124}\end{equation}
So the claim in the Proposition is that under this map the pull-back of the
degree 3 generator of the cohomology of $K(\bbZ,3)$ is the Dixmier-Douady invariant of $\cA$
and is equal to $\alpha \cup\beta$ in $\operatorname{H}^3(X,\bbZ).$

The first statement follows from the fact that the $\PU$ bundle to which
$\cA$ is associated is obtained from the $\bbZ\times\UU(1)$ bundle $\hat
X\times_X\tilde L$ by extending the structure group using
\eqref{mms2002.123}. The second statement follows from the fact that under
the map \eqref{mms2002.124} the generating 3-class
$\delta_{\operatorname{DD}} \in \operatorname{H}^3(K(\bbZ,3), \bbZ)$ pulls back to $\alpha '\cup\beta
'$ where $\alpha'\in \operatorname{H}^1(\bbS,\bbZ)$ and $\beta '\in \operatorname{H}^2(\PU,\bbZ)$ are the
generators, that is,
\begin{equation}
\Delta ^*\delta =\alpha '\cup\beta '.
\label{mms2002.125}\end{equation}
Indeed, the degree 3-cohomology of $\UU(1)\times\PU$ has a 
single generator, so
\eqref{mms2002.125} must be correct up to a multiple on the right
side. Thus it is enough to check one example, to determine that the multiple 
is equal to one. Take $X=\bbS\times\bbS^2$
with $u$ the identity on $\bbS$ and $L$ the standard line bundle over the
sphere. We know that the induced map \eqref{mms2002.117} for the sphere
generates the second homotopy group of $\PU$ and pulls back to the
fundamental class on $\bbS^2.$ Thus it suffices to note that the $\PU$
bundle over $\bbS\times\bbS^2$ with which our smooth Azumaya bundle is
associated in this case is just obtained by the clutching construction from
the trivial bundle over $[0,2\pi]\times\bbS^2$ using this map.
\end{proof}

An interesting special case of this construction, close to the lifting to
the circle bundle described in \S\ref{Circlecase}, arises when $\beta\in
\operatorname{H}^2(X;\bbZ)$ is thought of as the first Chern class of a complex vector
bundle rather than a line bundle. Then $Y$ can be taken to be the
associated principal bundle 
\begin{equation*}
\xymatrix@1{\UU(n)\ar@{-}[r]&P\ar[d]\\
&X.}
\label{mms2002.91}\end{equation*}

Since the abelianization of $\UU(n)$ is canonically isomorphic to $\UU(1),$
any character (i\@.e\@.~1-dimensional unitary representation) of $\UU(n)$
factorizes through $\UU(1)$, and conversely, any character of $\UU(1)$
lifts to a character of $\UU(n).$ A $\UU(1)$-central extension of the group
$\UU(n) \times \bbZ$ arises in the form of a generalized Heisenberg
group. Namely the group product on $H_n=\UU(n) \times \bbZ \times \UU(1)$
can be taken to be
$$
(g_1, n_1, z_1)(g_2, n_2, z_2) = (g_1g_2, n_1 + n_2,
\det(g_1)^{n_2} z_1 z_2).
$$
Then
$$
1\longrightarrow \UU(1)\longrightarrow  H_n\longrightarrow \UU(n)\times
\bbZ\longrightarrow1
$$
is a central extension.

\section{Stable Azumaya isomorphism}\label{Stabiso}

We proceed to show that the twisted K-groups, $\Kt^0(X;\cA(\gamma )),$
defined through the possible data \eqref{mms2002.115} -- \eqref{mms2002.87}
corresponding to a fixed decomposition \eqref{mms2002.137} are all
naturally isomorphic, as indicated in \eqref{mms2002.203}. This is a
consequence of the Morita invariance of the $C^*$ K-groups and the existence
of stabilized isomorphisms between the various Azumaya bundles.

For a smooth 1-parameter family of trivializations, as in
\eqref{mms2002.87}, so depending smoothly on $t\in[0,1],$ the K-groups
$\Kt^0(X;\cA(\gamma (t)))$ are all naturally isomorphic. Since two such
trivializations differ by a smooth map $Y\longrightarrow \UU(1),$ the
K-group can only depend on the homotopy class of this map, when the other
data is fixed. It is also the case that K-theory of $C^*$ algebras admits
Morita equivalence. That is, the K-group of $\cA$ is naturally isomorphic
to the K-group of $\cA\otimes\cK.$ Kuiper's theorem shows that the
completion of the smoothing operators on any fiber bundle over a space, and
acting on sections of any vector bundle, $V,$ over the fiber bundle, is
naturally isomorphic up to homotopy to the trivial Azumaya bundle $\cK.$ It
follows that the `twisted' K-theory of a space, computed with respect to
such a bundle is naturally isomorphic to the untwisted K-theory. More
generally, taking the smooth Azumaya bundle $\smA(\gamma)$ and tensoring
with the bundle of smoothing operators, $\Psi^{-\infty}(\psi;V),$ on any
other fiber bundle $\psi:Y'\longrightarrow X,$ over the same base, gives an
Azumaya bundle with the same twisted K-theory, $\Kt^0(X;\cA(\gamma)).$ This
proves:-

\begin{lemma}\label{mms2002.206} If $\psi:Y'\longrightarrow X$ is a
  fibration of compact manifolds and $\smA(\gamma)$ is the Azumaya bundle
  associated to data \eqref{mms2002.115} -- \eqref{mms2002.87} then there
  is a natural isomorphism of twisted K-theory 
\begin{equation}
\Kt^0(X;\cA(\gamma ))\overset{\simeq}\longrightarrow \Kt^0(X;\cA(\gamma '))
\label{mms2002.207}\end{equation}
where $\gamma '$ is the trivialization obtained by pulling back $\gamma$
to the product bundle $Y\times_XY'\longrightarrow X.$
\end{lemma}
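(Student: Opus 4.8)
The plan is to identify the smooth Azumaya bundle $\smA(\gamma')$ attached to the pulled-back data with the tensor product $\smA(\gamma)\hotimes\Psi^{-\infty}(Y'/X)$ of bundles of algebras over $X$, and then to invoke the general principle already recorded in the paragraph preceding the statement. Once that identification is in hand, passing to Azumaya completions turns the right-hand side into $\cA(\gamma)\otimes\cK(L^2(Y'/X))$, which by Kuiper's theorem is canonically, up to homotopy, the trivial stabilization $\cA(\gamma)\otimes\cK$; Morita (stable) invariance of $C^*$-algebra K-theory then gives the isomorphism $\Kt^0(X;\cA(\gamma'))\cong\Kt^0(X;\cA(\gamma))$ of \eqref{mms2002.207}.

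First I would check that the pulled-back data really is of the required type and compute its line bundle. Write $p\colon Y\times_XY'\to Y$ for the projection; then $(\phi\circ p)^*L=p^*\phi^*L$ is trivialized by $\gamma'=p^*\gamma$, and $(\phi\circ p)^*\beta=p^*(\phi^*\beta)=0$ since $\phi^*\beta=0$ already on $Y$, so $(u,L,\phi\circ p,\gamma')$ is again admissible data of the form \eqref{mms2002.115}--\eqref{mms2002.87}. The global section of \eqref{mms2002.56} representing $\gamma$ pulls back under $p$ to the one representing $\gamma'$, so the groupoid character of \eqref{mms2002.57} attached to $\gamma'$ is the pullback $\iota^*s$, where $\iota\colon(Y\times_XY')^{[2]}\to Y^{[2]}$ is the map $((z_1,w_1),(z_2,w_2))\mapsto(z_1,z_2)$. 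Since the construction of \S\ref{SmoothAzbun} produces $J$ out of the character $s$ and the fixed principal $\bbZ$-bundle $\hat X$ alone, this forces $J(\gamma')=\iota^*J(\gamma)$ as primitive line bundles over $(Y\times_XY')^{[2]}$, compatibly with connections and with the primitivity isomorphism \eqref{mms2002.73}, the three projections in \eqref{mms2002.72} for $Y\times_XY'$ mapping to those for $Y$.

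Next I would establish the tensor factorization fibrewise. Over $x\in X$ the fibre of $Y\times_XY'$ is $Y_x\times Y'_x$, so $(Y\times_XY')^2_x\cong Y^2_x\times(Y'_x)^2$, and by the previous paragraph $J(\gamma')\otimes\Omega_R$ restricts there to the external product $(J(\gamma)|_{Y^2_x}\otimes\Omega_R)\boxtimes\Omega_R$. Using the Schwartz-kernel description of Definition~\ref{def:Azumaya},
\begin{equation*}
\smA(\gamma')_x=\CI\bigl(Y^2_x;\,J(\gamma)|_{Y^2_x}\otimes\Omega_R\bigr)\hotimes\CI\bigl((Y'_x)^2;\,\Omega_R\bigr)=\smA(\gamma)_x\hotimes\Psi^{-\infty}(Y'_x),
\end{equation*}
and one reads off from \eqref{mms2002.190} that the composition of Schwartz kernels over $Y_x\times Y'_x$ decomposes as the $J$-twisted composition over $Y_x$ tensored with the ordinary composition over $Y'_x$, since the $J$-factor of a kernel depends only on the $Y$-variables and the fibre integration factors. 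These identifications are smooth in $x$ and compatible with the local trivializations of Proposition~\ref{mms2002.66}, so they patch to an isomorphism of bundles of algebras over $X$, $\smA(\gamma')\cong\smA(\gamma)\hotimes\Psi^{-\infty}(Y'/X)$.

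Finally, completing as Azumaya bundles gives $\cA(\gamma')\cong\cA(\gamma)\otimes\cK(L^2(Y'/X))$; Kuiper's theorem makes the Hilbert bundle $L^2(Y'/X)\to X$ norm-trivial with trivialization unique up to homotopy, so $\cK(L^2(Y'/X))\cong\cK$ canonically up to homotopy and $\cA(\gamma')\cong\cA(\gamma)\otimes\cK$, whence Morita invariance of $C^*$-algebra K-theory supplies the natural isomorphism \eqref{mms2002.207}; naturality is automatic because every identification above is canonical up to contractible choice. I expect the only genuinely substantive point to be the factorization in the middle step, namely checking that the Schwartz-kernel product on $\smA(\gamma')$ really splits as the external product of the two composition laws — which hinges on $J(\gamma')=\iota^*J(\gamma)$ being compatible with the primitivity isomorphism \eqref{mms2002.73} underlying \eqref{mms2002.190}. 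Both are routine unwindings of the definitions of \S\ref{SmoothAzbun}, and once they are in place the remainder is the bookkeeping application of Kuiper's theorem and Morita invariance already indicated before the statement.
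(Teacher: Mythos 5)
Your proposal is correct and follows essentially the same route as the paper: the paper's (brief) argument preceding the statement is precisely that the Azumaya bundle for the pulled-back data is $\smA(\gamma)$ tensored with the bundle of smoothing operators on the fibres of $Y'\longrightarrow X$, whose completion is trivialized via Kuiper's theorem, so that Morita (stable) invariance of $C^*$-algebra K-theory yields \eqref{mms2002.207}. The only difference is that you spell out the fibrewise identification $J(\gamma')=\iota^*J(\gamma)$ and the factorization of the Schwartz-kernel composition, details the paper leaves implicit.
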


Applying this result to the initial Azumaya bundle in \S~\ref{Circlecase}
and the general case, shows that $\Kt^0(X;\alpha ,\beta)$ and 
$\Kt^0(X;\cA(\gamma ))$ are each naturally isomorphic to some (possibly
different) $\Kt^0(X;\cA(\gamma'))$ where $\gamma'$ is a trivialization of the
lift of $\tilde L$ to $\tilde L\times_X Y,$ obtained in the two cases by
lifting the trivialization from $\tilde L$ or $Y$ to the fibre
product. Thus it remains to consider two different trivializations over the
same fibration.

\begin{proposition}\label{mms2002.205} If $\gamma_i$ are two
  trivializations of $\phi^*L$ over $Y$ as in \eqref{mms2002.87} then there
  is an embedding of algebras, unique up to homotopy, 
\begin{equation}
\smA(\gamma _2)\longrightarrow \smA(\gamma _1)\boxtimes\Psi^{-\infty}(\bbT^2;K)
\label{mms2002.208}\end{equation}
for a line bundle over the 2-torus which induces natural isomorphisms
\begin{equation}
\Kt^0(X;\cA(\gamma _2))\overset{\simeq}\longrightarrow \Kt^0(X;\cA_{12}))
\overset{\simeq}\longrightarrow \Kt^0(X;\cA(\gamma _1))
\label{mms2002.209}\end{equation}
where $\cA_{12}$ is the completion of
$\smA(\gamma_1)\boxtimes\Psi^{-\infty}(\bbT^2;K).$ 
\end{proposition}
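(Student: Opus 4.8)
The plan is to reduce the comparison of $\smA(\gamma_1)$ and $\smA(\gamma_2)$ to the single line bundle $K_{12}$ over $Y$ that measures the difference of the two trivializations, to recognise $K_{12}$ as a pull-back from the $2$-torus, and then to realise the resulting ``inner'' twist by an explicit amplification. By \eqref{mms2002.199} the pair $\gamma_1,\gamma_2$ determines $\gamma_{12}\in\CI(Y;\UU(1))$ and a Hermitian line bundle $K_{12}$ over $Y$ with $c_1(K_{12})=[\gamma_{12}]\cup\phi^*\alpha$, and by \eqref{mms2002.201} one has $J(\gamma_2)\cong(K_{12}^{-1}\boxtimes K_{12})\otimes J(\gamma_1)$, compatibly with the primitivity of Lemma~\ref{mms2002.71}. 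In view of Definition~\ref{def:Azumaya} this exhibits $\smA(\gamma_2)$ as the fibrewise ``conjugate'' of $\smA(\gamma_1)$ by $K_{12}$; in the local model \eqref{mms2002.69} it becomes $\Psi^{-\infty}(\phi^{-1}(U)/U;K_\tau\otimes K_{12}^{-1})$. Since $c_1(K_{12})$ is decomposable, $K_{12}\cong g^*K$ where $g=(\gamma_{12},\phi^*u):Y\longrightarrow\bbT^2=\UU(1)\times\UU(1)$ and $K$ is a fixed Hermitian line bundle over $\bbT^2$ with $c_1(K)$ a generator of $\operatorname{H}^2(\bbT^2;\bbZ)$ --- the degree-$2$ analogue of the decomposition of the Dixmier--Douady class recalled in the Introduction.

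Next I would construct the embedding \eqref{mms2002.208}. Fix a Hermitian metric on $K$ and a smooth section $\varphi\in\CI(\bbT^2\times\bbT^2;K\boxtimes K)$ which for each point $p$ of the left factor is a nonzero vector in $L^2(\bbT^2;K)$; after normalisation let $e(p,q)$ be the smoothing kernel with Schwartz kernel $\varphi_p(t_1)\otimes\overline{\varphi_q(t_2)}$, so that $e(p,q)e(q,r)=e(p,r)$ and each $e(p,p)$ is a rank-one projection. Because $\varphi_p$ naturally carries a factor of $K_p$, the identification $K_{12}\cong g^*K$ makes
\begin{equation*}
\iota(a)(z_1,z_2,t_1,t_2)\;=\;a(z_1,z_2)\cdot e\big(g(z_1),g(z_2)\big)(t_1,t_2)
\end{equation*}
--- the $K_{g(z_i)}^{\pm 1}$-factors carried by $a$ and by $\varphi_{g(z_i)}$ being paired off with the Hermitian metric of $K$ --- a well-defined section of $J(\gamma_1)\boxtimes\Hom(K)\otimes\Omega_R$ over $(Y\times\bbT^2)^{[2]}_X=Y^{[2]}\times(\bbT^2)^2$. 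The identity $e(p,q)e(q,r)=e(p,r)$ together with the primitivity of $J(\gamma_1)$ shows that $\iota$ is a $*$-algebra homomorphism $\smA(\gamma_2)\longrightarrow\smA(\gamma_1)\boxtimes\Psi^{-\infty}(\bbT^2;K)$, giving \eqref{mms2002.208}.

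The image of $\iota$ is the hereditary subalgebra of $\cA_{12}$, the completion of $\smA(\gamma_1)\boxtimes\Psi^{-\infty}(\bbT^2;K)$, cut by the multiplier projection ``amplify by $e\circ g$''; this projection is full, since the fibres of $\cA_{12}$ are simple and it is fibrewise nonzero, so $\iota$ extends to an isomorphism of $\cA(\gamma_2)$ onto a full corner of $\cA_{12}$. Morita invariance of $C^*$-$K$-theory then gives the first isomorphism in \eqref{mms2002.209}, while the second is Lemma~\ref{mms2002.206} applied to the product fibration $X\times\bbT^2\longrightarrow X$ with the bundle $K$. For the uniqueness clause I would note that the metric on $K$, the section $\varphi$, and the identification $K_{12}\cong g^*K$ range over convex or connected sets, so any two embeddings of this form are joined by a smooth homotopy through such homomorphisms and hence induce the same map on $K$-theory; in particular the composite in \eqref{mms2002.209} is the canonical identification $\Kt^0(X;\cA(\gamma_2))\cong\Kt^0(X;\cA(\gamma_1))$ used in the commutative diagram \eqref{mms2002.204}.

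The main obstacle is the middle step: arranging the amplification so that the residual $K_{12}$-twisting of $J(\gamma_2)$ is exactly absorbed, via the Hermitian metric, into the $\Hom(K)$-twisting carried along the graph of $g$, so that $\iota$ is simultaneously well defined on twisted kernels and multiplicative. Granting this, the first and last steps are formal or immediate from \eqref{mms2002.201}, Lemma~\ref{mms2002.71}, Lemma~\ref{mms2002.206} and Morita invariance of $C^*$-$K$-theory.
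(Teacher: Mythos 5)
Your argument is correct and rests on the same structural reduction as the paper: both proofs start from \eqref{mms2002.201} together with the fact that $K_{12}$ is pulled back from the $2$-torus under the map determined by $\gamma_{12}$ and $\phi^*u$, both realise \eqref{mms2002.208} by tensoring a kernel on $Y^{[2]}$ with a family of rank-one smoothing projections on $\bbT^2$ indexed by that map, and both obtain \eqref{mms2002.209} from Morita invariance plus the stabilization argument of Lemma~\ref{mms2002.206}. The difference is in how the rank-one family is produced and how the residual twist is absorbed. The paper embeds $Y$ into $Y\times\bbT^2$ as the graph of $\kappa_{12}$, takes harmonic-oscillator ground states on the fibres of the normal bundle, and must then deform the bundle $K$ pulled back from $\bbT^2$ to a bundle $\tilde K$ agreeing with $K_{12}$ over a collar neighbourhood, after which the first isomorphism in \eqref{mms2002.209} is reached through a chain of Morita equivalences (ground-state corner, algebra supported on the closure of the collar, full algebra, deformation of $\tilde K$ back to $K$); this is deliberately modelled on the proof of the index theorem by embedding. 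You instead choose a global section $\varphi$ of $K\boxtimes K$ nowhere vanishing in the second slot, so that the $K_{g(z_i)}^{\pm1}$-factors of $J(\gamma_2)$ are cancelled exactly by the $K_{g(z_i)}$-factor of $\varphi$ via the Hermitian metric, and the image of $\iota$ is at once a full corner $P\cA_{12}P$; no normal bundle, collar, or deformation of $K$ is needed, the only substantive checks being $e(p,q)e(q,r)=e(p,r)$ and the compatibility of the contractions with primitivity, which is exactly the consistency asserted after \eqref{mms2002.201}. So your route is more direct, while the paper's reuses machinery it needs anyway for the index theorem. One small caveat: the set of identifications $K_{12}\cong g^*K$ is a torsor over $\CI(Y;\UU(1))$, whose components are indexed by $\operatorname{H}^1(Y;\bbZ)$, so it is not connected and your ``convex or connected sets'' argument does not literally cover that choice; however, changing the identification replaces $\iota$ by its conjugate under a unitary multiplier of $\cA_{12}$, which acts trivially on K-theory, so the induced isomorphisms \eqref{mms2002.209}, and hence the compatibility with \eqref{mms2002.204}, are unaffected.
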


\begin{proof} This is really an adaptation of the proof of the index
  theorem via embedding. First, we recall the discussion above, which shows
 that the primitive line bundle $J(\gamma _2)$ is isomorphic to $J(\gamma
 _1)\otimes(K_{12}\boxtimes K_{12}')$ for a line bundle $K_{12}$ over $Y$
 pulled back from a line bundle $K$ over $\bbT$ by a smooth map
 $\kappa_{12}:Y\longrightarrow \bbR^2.$ This map embeds $Y$ as a
 subfibration of $\phi\circ\pi_1:Y\times\bbT^2\longrightarrow X.$ Let
 $N\longrightarrow Y$ be the normal bundle to this embedding. Given a
 metric this carries a field of harmonic oscillators on the fibres, the
 ground states of which give the desired embedding.

Let $v(z,\zeta)$ be the $L^2$-orthonormalized ground state on the fiber
over $z\in Y.$ Then   
\begin{equation}
\CI(Y^{[2]};J(\gamma ))\ni a(z_1,z_2)\longmapsto
\tilde a=v(z_1,\zeta_1)a(z_1,z_2)v(z_2,\zeta _2)\in\cS(V^{[2]};J(\gamma_2))
\label{mms2002.210}\end{equation}
is an embedding. Moreover, this is an embedding of algebras with the
algebra structure on the right given by Schwartz-smoothing operators. Now
consider the bundle $J(\gamma_1)\boxtimes K\boxtimes K'$ over
$Y^{[2]}\times\bbT^2\times\bbT^2.$ Restricted to the image of the embedding
of $Y^{[2]}$ given by $\kappa_{12}$ acting in both fibres, this is
isomorphic to $J(\gamma_2)$ since by construction $K$ pulls back to
$K_{12}$ over $Y.$ Now, consider an embedding of $V,$ using the collar
neighborhood theorem, as a neigborhood, $\Omega\subset Y\times\bbT^2,$ of
the image of $Y^{[2]}$ under this embedding. The bundle $K,$ pulled back to
$Y\times\bbT^2$ by the projection onto $\bbT^2$ can be deformed to a bundle
$\tilde K,$ which is equal over $\Omega$ to the pull back under the normal
retraction of its restriction, $K_{12},$ to the image of $Y.$ Then the
embeddign \eqref{mms2002.210} embeds $\CI(Y^{[2]};J(\gamma _2)$ as a
subalgebra of $\CI((Y')^{[2]};J(\gamma_1)\otimes\tilde K\boxtimes\tilde
K'),$ $Y'=Y\times\bbT^2.$ Moreover, using the full spectral expansion of
the harmonic oscillator, the completion of the image is Morita equivalent
to the whole subalgebra with support in the compact manifold with boundary
which is the closure of $\Omega\subset Y'.$ This in turn is Morita
equivalent to the whole algebra and hence, after another deformation of
$\tilde K$ back to $K$ over $\bbT^2$ to $\cA_{12}$ in
\eqref{mms2002.209}. This gives the first isomorphism in
\eqref{mms2002.209}. The second follows from stabilization by the compact
operators on $K$ over $\bbT^2$ as discussed above, completing the proof.
\end{proof}

\begin{proof}[Proof of \eqref{mms2002.203}] As noted above this is a
  corollary of Proposition~\ref{mms2002.205} and the preceeding
  discussion. Namely this provides a stabilized isomorphism, unique up to
  homotopy, of the Azumaya bundle in \S\ref{Circlecase} with that
  constructed over $\tilde L\times_XY$ by lifting the trivialization over
  $\tilde L$ to the fiber product. The same is true by lifting the
  trivialization over $Y$ to the fiber product. Then the Proposition
  constructs a stable isomorphism, again unique up to homotopy, of the
  two lifts to $\tilde L\times_X\times Y\times\bbT^2.$ These stable
  isomorphisms project to a unique isomorphism of the twisted K-groups,
  as in \eqref{mms2002.203}, consistent under composition.
\end{proof}

\begin{lemma}\label{TrivChange} The Azumaya bundle $\smA(\gamma),$ lifted
  to $Y,$ is completion isomorphic to the trivial bundle $\cK,$ with the
  isomorphism fixed up to homotopy, and this induces the natural
  isomorphisms \eqref{mms2002.192}.
\end{lemma}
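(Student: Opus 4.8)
The plan is to establish the triviality of $\smA(\gamma)$ after lifting to $Y$ by producing an explicit global section that serves as a trivializing frame, exactly paralleling the local construction of Proposition~\ref{mms2002.66} but now using the global section $s'$ of $\phi^*\tilde L$ supplied by the trivialization data $\gamma$. First I would observe that the global trivialization $\gamma$ (equivalently the section $s'$ in \eqref{mms2002.56}) decomposes the character $s$ globally on $Y^{[2]}$ as $s(z_1,z_2)=\chi(z_1)\chi(z_2)^{-1}$ with $\chi(z)=s(z,s'(\phi(z)))$, so that the formula $(z,\hat x+n,w)\simeq(z,\hat x,\chi(z)^nw)$ defines a line bundle $K=K(\gamma)$ over all of $Y$, not merely over $\phi^{-1}(U)$. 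Then the primitive bundle $J$ is globally isomorphic to $K\boxtimes K'$ on $Y^{[2]}$, compatibly with the primitivity isomorphism \eqref{mms2002.73}, and hence by Definition~\ref{def:Azumaya} the lifted Azumaya bundle is globally identified with $\Psi^{-\infty}(Y\times_X Y/Y; \text{pr}^*K)$ — that is, the fibrewise smoothing operators on $Z$ acting on sections of $K$ along the fibres of $\phi\circ\pi_1$.

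Next I would pass from this bundle of smoothing operators to the trivial Azumaya bundle $\cK$. Here the mechanism is Kuiper's theorem together with the discussion already in \S\ref{Stabiso}: the completion of $\Psi^{-\infty}(\phi\circ\pi_1; \text{pr}^*K)$ is a bundle of compact operators on the Hilbert bundle $L^2$-sections of $\text{pr}^*K$ along the fibres, and by Kuiper's theorem that Hilbert bundle is trivial with trivialization canonical up to homotopy. This produces the completion-isomorphism with $\cK$ together with its homotopy uniqueness. The final step is to check that the induced map on K-theory is precisely \eqref{mms2002.192}: the isomorphism $\Kt^0(Y;\phi^*\cA(\gamma))\xrightarrow{\simeq}\Kt^0(Y)$ is, by definition of twisted K-theory for a trivialized Azumaya bundle, the one coming from any such trivialization, and homotopy uniqueness of the trivialization guarantees independence of choices.

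The main obstacle, such as it is, is bookkeeping rather than substance: one must verify that the global identification $J\cong K\boxtimes K'$ is genuinely compatible with the groupoid/primitivity structure \eqref{mms2002.73}, so that the fibrewise composition \eqref{mms2002.190} is carried to honest composition of smoothing operators on sections of $K$. This is exactly the content of Proposition~\ref{mms2002.76} applied with the global section $\tau$ replaced by the global datum $s'$; since $s'$ is defined on all of $Y$ (this is precisely the hypothesis $\phi^*\beta=0$ made concrete in \eqref{mms2002.87}), the local argument there globalizes verbatim with no partition-of-unity patching needed. I would then remark that the homotopy class of the resulting isomorphism depends only on the homotopy class of $\gamma$, since a smooth homotopy of trivializations induces a smooth homotopy of the frames $\chi$ and hence of the induced Hilbert-bundle trivializations, which is what makes \eqref{mms2002.192} canonical and compatible with the stable isomorphisms of Proposition~\ref{mms2002.205}.
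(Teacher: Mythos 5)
Your opening step fails, and it is the step that carries the whole argument. The trivialization $\gamma$ of $\phi^*L$ gives a global section $s'$ of $\phi^*(\tilde L)$ over $Y$, but $s'$ takes values in a circle \emph{torsor}, not in $\UU(1)$, and it is not a section of the fibration $\phi:Y\longrightarrow X$; the expression $\chi(z)=s(z,s'(\phi(z)))$ does not typecheck, and no globally defined $\chi:Y\longrightarrow\UU(1)$ with $s(z_1,z_2)=\chi(z_1)\chi(z_2)^{-1}$ exists in general. Producing such a $\chi$ is equivalent to choosing a base point in each fibre consistently, i.e.\ to a global section $\tau$ of $\phi$ as in Proposition~\ref{mms2002.66} (or a trivialization of $\tilde L$ over $X$), which is exactly what one does not have. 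Worse, if your claimed global decomposition $J\cong K\boxtimes K'$ with $K$ a line bundle over $Y$ were true, then by the argument recorded in Appendix~\ref{appendix:cech} one would have $\CI(X;\smA)\cong\Psi^{-\infty}(Y/X;K)$ and the Dixmier--Douady invariant of $\cA$ over $X$ (not merely its lift to $Y$) would vanish, contradicting Proposition~\ref{mms2002.119} whenever $\alpha\cup\beta\neq 0$ --- e.g.\ $X=\bbS\times\bbS^2$ with $Y=\tilde L$. So the hypothesis $\phi^*\beta=0$, made concrete by $\gamma$, cannot deliver what you ask of it: it trivializes $\phi^*L$, not the character $s$.

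The correct mechanism, and the one the paper uses, does not involve $\gamma$ at all beyond its role in defining $J$: one lifts the fibration to $Y$, where the pulled-back family $Y^{[2]}\longrightarrow Y$ (projection off the first factor) \emph{does} have a tautological section, and the primitivity isomorphism \eqref{mms2002.73} of Lemma~\ref{mms2002.71} gives, over $Y^{[3]}$, $\pi_{23}^*J\cong\pi_{12}^*J'\otimes\pi_{13}^*J$; equivalently, for each $z_1\in Y_x$ the restriction of $J$ to $Y_x^{2}$ decomposes as $K_{z_1}'\boxtimes K_{z_1}$ with $K_{z_1}(\cdot)=J_{(z_1,\cdot)}$. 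Thus the lifted Azumaya bundle is identified with $\Psi^{-\infty}(Y^{[2]}/Y;J')$, smoothing operators along the fibres of $Y^{[2]}\to Y$ acting on sections of the line bundle $J'$ over the \emph{total space} of that lifted fibration --- the role you wanted a bundle $K$ over $Y$ to play is played by $J$ itself, viewed over $Y^{[2]}$ (this is Proposition~\ref{mms2002.76}). From that point your argument is fine and agrees with the paper: Kuiper's theorem trivializes the completion of such a bundle of smoothing operators, canonically up to homotopy, and this yields \eqref{mms2002.192}.
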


\begin{proof} The primitivity condition on $J$ shows that when lifted to
  the second two factors of $Y^{[3]}$ it is isomorphic to the bundle over
  $Y^{[3]}$ of which the elements of $\Psi(Y^{[2]}/Y;J'),$ the smoothing
  operators on the fibers of $Y^{[2]}$ as a bundle over $Y,$ are
  (density-valued) sections. As noted above, Kuiper's theorem shows that
  the completion of $\Psi(Y^{[2]}/Y;J')$ is naturally, up to homotopy,
  isomorphic to the trivial Azumaya bundle $\cK,$ from which
  \eqref{mms2002.192} follows.
\end{proof}

\section{Analytic index}\label{sect:analytic ind}

We now proceed to define the analytic index map \eqref{mms2002.55} using
the constructions in \S\ref{decompGeom}, \S\ref{SmoothAzbun} and
\S\ref{Stabiso}. The first step is to define the projective bundle of
pseudodifferential operators. We do this by direct generalization of
Definition~\ref{def:Azumaya}. So, for any $\bbZ_2$-graded bundle
$\bbE=(E_+,E_-)$ over $Y$ set
\begin{equation}
\Psi^{\ell}(Y/X; \cA\otimes\bbE)=
I^{\ell}(Y^{[2]},\Diag;J\otimes\Hom(\bbE)\otimes\Omega_R)
\label{mms2002.78}\end{equation}
where $\Hom(\bbE)=E_-\boxtimes E_+'$ over $Y^{[2]}$ and $I^{\ell}$ is the
space of (classical) conormal distributions. As is typical in projective index theory,
the Schwartz kernel of the projective family of elliptic operators is globally
defined, even though one only has local families of elliptic operators 
with a compatibility condition on triple overlaps given by a phase factor.  
More precisely, 
definition \eqref{mms2002.78} means that on any open set in
$Y^{[2]}$ over which $J$ is trivialized as $\Hom(K_\tau)$ as in 
Proposition \ref{mms2002.66}, the kernel is that of a family of
pseudodifferential operators on the fibres of $Y$ acting from sections of
$E_+$ to sections of $E_-.$ It follows from the standard case that
\eqref{mms2002.69} also extends immediately to show that if
$\tau:U\longrightarrow Y$ is a section over an open set, then
\begin{multline}
\Psi^{\ell}(\phi^{-1}(U)/U;\cA\otimes\bbE) \cong_{\tau}
\Psi^{\ell}(\phi^{-1}(U)/U;K_{\tau}\otimes\bbE)\\
\overset{\sigma _\ell}
\longrightarrow \CI(S^*(\phi^{-1}(U)/U);\hom(\bbE)\otimes N_{\ell}),
\label{mms2002.79}\end{multline}
where we have used the fact that $\hom(K_\tau)$ is canonically trivial.
The principal symbol map here is invariant under conjugation by functions and
hence well-defined independent of the trivialization; $N_{\ell}$ is
the trivial line bundle corresponding to functions of homogeneity $\ell$ on
$T^*(\phi^{-1}(U)/U)$ and $\hom(\bbE)$ is the bundle (over
$S^*(\phi^{-1}(U)/U))$ of homomorphisms from $E_+$ to $E_-.$ Thus the usual
composition properties of pseudodifferential operators extend without any
difficulty as do the symbolic properties. More precisely, 

\begin{lemma}\label{mms2002.80} The spaces of smooth sections of
$\Psi^{\ell}(Y/X; \cA\otimes\bbE)$ form graded modules under composition
  and the principal symbol defined through \eqref{mms2002.79} is
  independent of $\tau$ and gives a multiplicative short exact sequence for
  any $\ell:$ 
\begin{equation}
\Psi^{\ell-1}(Y/X; \cA\otimes\bbE) \longhookrightarrow
\Psi^{\ell}(Y/X; \cA\otimes\bbE)\stackrel{\sigma_{\ell}}{\longrightarrow}
\CI(S^*(Y/X;p^*\hom(\bbE)\otimes N_{\ell})).
\label{mms2002.81}\end{equation}
\end{lemma}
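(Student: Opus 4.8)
The plan is to reduce everything to the standard (untwisted) calculus of families of pseudodifferential operators by means of the local trivializations of Proposition~\ref{mms2002.66}, and then to check that the transition data is benign enough that all of the standard structure descends and is globally well defined.

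First I would fix an open cover $\{U_a\}$ of $X$ together with sections $\tau_a\colon U_a\to\phi^{-1}(U_a)$ of $\phi$. By Proposition~\ref{mms2002.66}, and the identification \eqref{mms2002.79}, each restriction $\Psi^{\ell}(\phi^{-1}(U_a)/U_a;\cA\otimes\bbE)$ is identified with the ordinary space $\Psi^{\ell}(\phi^{-1}(U_a)/U_a;K_{\tau_a}\otimes\bbE)$ of families of classical pseudodifferential operators on the fibres of $Y$ acting from sections of $E_+\otimes K_{\tau_a}$ to sections of $E_-\otimes K_{\tau_a}$. For such families the graded module property under composition and the short exact symbol sequence are standard facts about the conormal spaces $I^{\ell}(Y^{[2]},\Diag;\cdots)$; in particular $I^{\ell_1}\circ I^{\ell_2}\subset I^{\ell_1+\ell_2}$ and $\sigma_{\ell_1+\ell_2}(A\circ B)=\sigma_{\ell_1}(A)\,\sigma_{\ell_2}(B)$, and the kernel of $\sigma_{\ell}$ is precisely $\Psi^{\ell-1}$. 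So all the asserted structure holds over each $U_a$.

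Next I would examine the transition maps. By \eqref{mms2002.67} two sections $\tau_a,\tau_b$ differ over $U_a\cap U_b$ by a nonvanishing smooth function $\chi_{\tau_a}\chi_{\tau_b}^{-1}\in\CI(\phi^{-1}(U_a\cap U_b);\UU(1))$, and the two local identifications above differ by conjugation by this function acting on the fibres. Conjugation by a smooth nonvanishing function preserves each $I^{\ell}$, preserves the filtration $I^{\ell-1}\subset I^{\ell}$, and acts trivially on the principal symbol. Two consequences follow. First, the local descriptions of the composition are consistent on overlaps, so fibrewise composition is globally defined on $\CI(Y^{[2]};J\otimes\Omega_R)$ — which one can equally see directly from the primitivity isomorphism \eqref{mms2002.73} and the composition formula \eqref{mms2002.190}, with associativity provided by the naturality of \eqref{mms2002.73} over $Y^{[4]}$ as in Lemma~\ref{mms2002.71} — and the order is additive, giving the graded module structure. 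Second, the locally defined principal symbols agree on overlaps and hence patch to a single global map $\sigma_{\ell}$ with values in the genuinely untwisted space $\CI(S^*(Y/X);p^*\hom(\bbE)\otimes N_{\ell})$, using that $\hom(K_{\tau_a})$ is canonically trivial so no residual twisting survives on the symbol side; multiplicativity of $\sigma_\ell$ is inherited from the local statement.

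It then remains to verify exactness of \eqref{mms2002.81} globally. The inclusion $\Psi^{\ell-1}(Y/X;\cA\otimes\bbE)\hookrightarrow\Psi^{\ell}(Y/X;\cA\otimes\bbE)$ and the vanishing of $\sigma_{\ell}$ on it are immediate from the definition \eqref{mms2002.78}, and that the kernel of $\sigma_{\ell}$ is exactly $\Psi^{\ell-1}$ is the corresponding local statement, already conjugation-invariant. For surjectivity of $\sigma_{\ell}$ I would take a partition of unity $\{\rho_a\}$ on $X$ subordinate to $\{U_a\}$, lift the given symbol over each $U_a$ to an element of $\Psi^{\ell}(\phi^{-1}(U_a)/U_a;\cA\otimes\bbE)$ whose Schwartz kernel is supported near the fibre diagonal, multiply by $\phi^*\rho_a$, and sum; the result is a global conormal section with the prescribed symbol. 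The single point needing genuine (if light) care — the step I would call the main obstacle — is the global well-definedness of $\sigma_{\ell}$ as a map into an \emph{untwisted} symbol space: one must confirm that the local principal symbols agree on overlaps, which is exactly the conjugation-invariance of the principal symbol under multiplication by $\UU(1)$-valued functions together with the canonical triviality of $\hom(K_\tau)$. Everything else is a transcription of the standard pseudodifferential calculus through the local pictures.
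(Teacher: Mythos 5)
Your argument is correct, but it is organized differently from the paper's. You prove everything by patching local trivializations: identify $\Psi^{\ell}(\phi^{-1}(U_a)/U_a;\cA\otimes\bbE)$ with the ordinary calculus acting on $K_{\tau_a}\otimes\bbE$, check that the transitions preserve order, composition and principal symbol, and then glue (with surjectivity of $\sigma_\ell$ by partition of unity). The paper handles the module structure this way in the discussion around \eqref{mms2002.79}, but its actual proof of the symbol statement is global and avoids patching altogether: the principal symbol of an element of $I^{\ell}(Y^{[2]},\Diag;J\otimes\Hom(\bbE)\otimes\Omega_R)$ is, by the conormal-distribution calculus, a homogeneous section over the conormal bundle of the fibre diagonal of the restricted coefficient bundle (densities cancelling), and the primitivity of $J$ makes $J\big|_{\Diag}$ canonically trivial, so the symbol lands at once in the untwisted space $\CI(S^*(Y/X);\hom(\bbE)\otimes N_\ell)$ with no consistency check needed; your observation that $\hom(K_\tau)$ is canonically trivial is the local shadow of this. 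One small imprecision in your overlap analysis: $\chi_{\tau_a}\chi_{\tau_b}^{-1}(z)=s(\tau_b(\phi(z)),\tau_a(\phi(z)))$ is pulled back from the base, so conjugation by it is literally the identity on fibrewise kernels; the true discrepancy between the two local pictures is that $K_{\tau_a}=K_{\tau_b}\otimes\phi^*(L_{\tau_a,\tau_b})$ as in \eqref{mms2002.68'}, i.e.\ a twist by a line bundle from the base, which is equally harmless for kernels, orders and symbols (the genuine conjugation-by-nonvanishing-functions argument enters only after one further trivializes $K_\tau$ over sets where $\hat X$ is trivial). With that correction your patching proof goes through, and your additional global justification of the composition via the primitivity isomorphism \eqref{mms2002.73} and \eqref{mms2002.190} is exactly in the spirit of the paper; what the paper's route buys is that well-definedness and untwistedness of $\sigma_\ell$ come in one stroke from $J\big|_{\Diag}\cong\bbC$, while yours makes the local structure, and where each hypothesis is used, more explicit.
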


\begin{proof} The theory of conormal distributional sections of a complex
  vector bundle with respect to a submanifold, implicit already in
  H\"ormander's paper \cite{HoFIOI}, shows that these have well-defined
  principal symbols which are homogeneous sections over the conormal bundle
  of the submanifold, in this case the fibre diagonal, of the pull-back of
  the bundle tensored with a density bundle. In this case, as for
  pseudodifferential operators, the density bundles cancel. Moreover the
  bundle $J$ is canonically trivial over the (fiber) diagonal in $Y^{[2]}$
  by the primitivity property of $J.$ The symbol in \eqref{mms2002.81}
  therefore does not involve any twisting -- it takes values in the same
  space as in the untwisted case, and is a well-defined homogeneous section
  of the homomorphism bundle of $E$ (hence section of that bundle tensored
  with the homogeneity bundle $N_l)$ on the fibre cotangent bundle -- which
  is canonically the conormal bundle of the fibre diagonal, as claimed.
\end{proof}

With the trivialization $\kappa$ fixed, the symbol of a projective family 
of elliptic pseudodifferential operators determines an element 
in $\Kc^0(T^*(Y/X))$
We now show that the index of such a projective elliptic family is an
element in twisted K-theory of the base, $\Kt^0(X,\cA).$ More precisely,
let $P\in\Psi^{m}(Y/X;\cA\otimes\bbE)$ be a projective family of elliptic
operators. This means that the symbol is invertible in the usual sense, so
from the standard ellipticity construction (using iteration over $\ell$ in
the sequence \eqref{mms2002.81}) $P$ has a parametrix $Q \in
\Psi^{-m}(Y/X;\cA\otimes\bbE_-),$ where $\bbE_-=(E_-,E_+),$ such that $S_0 =
1- QP \in \Psi^{-\infty}(Y/X;\cA\otimes E_+)$ and $S_1 = 1- PQ \in 
\Psi^{-\infty}(Y/X;\cA\otimes E_-).$ Then the index is realized using the
idempotent
$$
E_1= \left(\begin{array}{ccc} 1-S_0^2 & Q(S_1 + S_1^2) \\
S_1 P  & S_1^2 \end{array}\right)
\in M_2(\Psi^{-\infty}(Y/X;\cA\otimes\bbE)^\dagger).
$$
Here, $\dagger$ denotes the unital extension of the algebra. It is standard
to verify that $E_1$ is an idempotent.

Then, as in the usual case, the analytic index of $P$ expressed in terms of
idempotents is 
\begin{equation}
\begin{gathered}
\ind_a{(P)}= [E_1-E_0]\in\Kt_0( \Psi^{-\infty}(Y/X;\cA\otimes\bbE))\Mwhere\\
E_0 = \left(\begin{array}{cc}1 & 0 \\0 & 0\end{array}\right) \in
 M_2(\Psi^{-\infty}(Y/X;\cA\otimes\bbE)^\dagger).
\end{gathered}
\label{mms2002.183}\end{equation}
That $\ind_a{(P)}$ is a well-defined element in the K-theory follows from
invariance of K-theory under Morita equivalence of algebras. Thus, the inclusion
$$
\CI(X, \cA)=\Psi^{-\infty}(Y/X;\cA)
\longhookrightarrow \Psi^{-\infty}(Y/X;\cA\otimes\bbE),
$$ 
induces a natural isomorphism of $\Kt_0(\Psi^{-\infty}(Y/X;\cA\otimes \bbE))$ and
$\Kt^0(X;\cA).$ Therefore we have defined the {\em analytic index} of any
projective family of elliptic pseudodifferential operators.

To see that this fixes the map,  
\begin{equation}
\ind_a:\Kc^0(T^*(Y/X);\rho ^*\phi^*\cA) \longrightarrow \Kt^0(X,\cA)
\label{mms2002.135}\end{equation}
we need, as usual, to check homotopy invariance, invariance under bundle
isomorphisms and stability. However, this all follows as in the standard case.

Of particular geometric interest are examples arising from projective
families of (twisted) Dirac operators. If the fibres of $Y$ are
even-dimensional and consistently oriented, let $\Cl(Y/X)$ denote the
bundle of Clifford algebras associated to some family of fiber metrics and
let $\bbE$ be a $\bbZ_2$-graded hermitian Clifford module over $Y$ with
unitary Clifford connection $\nabla^\bbE.$

This data determines a family of (twisted) Dirac operators $\eth_\bbE$
acting fibrewise on the sections of $\bbE.$ We can further twist
$\eth_\bbE$ by a connection $\nabla^\tau$ of
the line bundle $K_\tau$ over $\phi^{-1}(U) \subset Y$
for contractible open subsets $U \subset X.$
In this way, we get a projective family of
(twisted) Dirac operators $\eth_{\cA\otimes\bbE} \in \Psi^1(Y/X; \bbE \otimes\cA)$
which can be viewed as a
family of twisted Dirac operators acting on a projective Hilbert bundle
$\bbP(\phi_*(\bbE\otimes K_\tau))$ over $X.$ Here the local bundle
$\phi_*(\bbE\otimes K_\tau)$ is given by $U \times
L^2(\phi^{-1}(U)/U; \bbE\otimes K_\tau)$ for contractible open
subsets $U \subset X.$

The above projective Dirac family can be globally defined as follows.
Consider the delta distributional section
$\delta_Z^{\bbE, J} \in I^\bullet(Y^{[2]}, J\otimes\Hom(\bbE)\otimes\Omega_R)$,
which is supported on the fibrewise diagonal in $Y^{[2]}$. Let ${}^L\nabla^\bbE$
denote the unitary Clifford connection acting on the left variables, and
$\nabla^J$ a connection on $J$ which is compatible with the primitive
property of $J$. Then $$(1\otimes {}^L\nabla^\bbE+ \nabla^J\otimes 1)
\delta_Z^{\bbE, J} \in I^{\bullet-1}(Y^{[2]}, J\otimes\Hom(\bbE)\otimes T^*(Y/X)\otimes \Omega_R),$$
and composition with the contraction given by Clifford multiplication gives
$$c\circ(1\otimes {}^L\nabla^\bbE+ \nabla^J\otimes 1) \delta_Z^{\bbE, J}\in I^{\bullet-1}(Y^{[2]}, J\otimes\Hom(\bbE)\otimes \Omega_R),$$
which represents the Schwartz kernels of the projective
family of (twisted) Dirac operators denoted above by $\eth_{\cA\otimes\bbE}$.

\section{The topological index}\label{sect:top ind}

In this section we define the {\em topological index} map for the setup in
the previous section,
\begin{equation}
\ind_t: \Kc^0(T(Y/X);\rho ^*\phi^*\cA)\longrightarrow\Kt^0(X; \cA).
\end{equation}
It is defined in terms of Gysin maps in twisted $K$-theory, which have been
studied in the case of torsion twists in \cite{mms1}, which extends
routinely to the general case as in \cite{BEM, CW}. In the particular
case that we consider here, there are several simplifications that we shall
highlight.

We first recall some functorial properties of twisted $K$-theory. Let
$F:Z\longrightarrow X$ be a smooth map between compact manifolds.
Then the pullback map,
$$
F^!:\Kt^0(X, \cA) \longrightarrow \Kt^0(Z, F^*\cA ),
$$
is well defined. 

\begin{lemma}\label{Bott} There is a canonical isomorphism,
$$
j_! : \Kt^0(X, \cA )\cong \Kc^0 (X\times \R^{2N}, \pi_1^*\cA),
$$
determined by Bott periodicity, where the inclusion $j : X \to X
\times \R^{2N}$ is onto the zero section. Here $\pi_1: X\times \R^{2N} \to
X$ is the projection onto the first factor.
\end{lemma}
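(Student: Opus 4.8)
The plan is to recognise $j_!$ as a Thom isomorphism and then reduce it to Bott periodicity for $C^*$-algebras. Recall from the Introduction that, with $\cA$ the completed Azumaya bundle, $\Kt^0(X;\cA)=\Kt_0(\cB)$ is the $C^*$-algebra K-theory of the algebra $\cB=C(X;\cA)$ of continuous sections of $\cA$, and likewise $\Kc^0(X\times\R^{2N};\pi_1^*\cA)=\Kt_0\bigl(C_0(X\times\R^{2N};\pi_1^*\cA)\bigr)$, where $C_0$ denotes sections vanishing at infinity in the $\R^{2N}$ variables. Since the fibre of $\pi_1^*\cA$ over $(x,v)$ is $\cA_x$, independent of $v\in\R^{2N}$, this last section algebra is canonically isomorphic to $C_0(\R^{2N})\otimes\cB$; thus $\Kc^0(X\times\R^{2N};\pi_1^*\cA)\cong \Kt_0\bigl(C_0(\R^{2N})\otimes\cB\bigr)$, the K-theory of the $2N$-fold suspension of $\cB$.

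Next I would identify the geometric map $j_!$. The inclusion $j:X\hookrightarrow X\times\R^{2N}$ of the zero section has normal bundle the trivial bundle $X\times\R^{2N}\to X$, which carries the canonical $\SpinC$ (equivalently complex) structure; moreover $X\times\R^{2N}$ is itself a tubular neighbourhood of $X$, so no excision step is needed and $j_!$ is exactly the Thom isomorphism in twisted K-theory associated to this bundle (the relevant twisted Thom isomorphism being established in \cite{mms1} for torsion twists and in \cite{BEM, CW} in general, and needed here only for a trivial bundle with the base twist pulled back). For a trivial rank-$2N$ bundle the Thom class is the external product of $N$ copies of the Bott generator on $\R^2=\CC$, so under the identifications of the previous paragraph $j_!$ becomes the $N$-fold iterate of the $C^*$-algebra Bott map $\Kt_0(\cB)\to\Kt_0\bigl(C_0(\R^2)\otimes\cB\bigr)$.

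It then remains to invoke Bott periodicity in $C^*$-algebra K-theory: the Bott map is an isomorphism for every $C^*$-algebra $\cB$, hence so is its $N$-fold iterate, and therefore $j_!$ is the asserted canonical isomorphism. The only points that need checking are bookkeeping ones \mhy{} that $\pi_1^*\cA$ genuinely produces the suspension $C_0(\R^{2N})\otimes\cB$ (true because the twist is pulled back from $X$ and $\R^{2N}$ is contractible, so no extra twisting arises in the fibre directions), and that the geometrically-defined $j_!$ is intertwined with the iterated analytic Bott map (the standard compatibility of the topological and analytic Thom isomorphisms for trivial bundles, as in the cited references). I do not anticipate any genuine obstacle: the content of the lemma is precisely $C^*$-Bott periodicity with coefficients in $\cB=C(X;\cA)$.
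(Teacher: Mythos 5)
Your proposal is correct and is essentially the paper's own argument: identify the twisted K-groups with the K-theory of the section algebras, observe that the pulled-back twist makes the section algebra over $X\times\R^{2N}$ a (suspension-type) tensor product $C_0(\R^{2N})\otimes C(X;\cA)$, and conclude by Bott periodicity for operator algebras. The only cosmetic difference is that the paper phrases this with smooth, compactly supported section algebras rather than the $C_0$/$C^*$ completions, which changes nothing in the substance.
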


\begin{proof} First notice that $\Kt^\bullet(X, \cA ) =
  \Kt_\bullet(C^\infty(X, \cA))$ and  $\Kt_c^\bullet (X\times \R^{2N},
  \pi_1^*\cA) =  \Kt_\bullet (C_c^\infty(X \times \R^{2N}; \pi_1^*\cA) ).$
  But $C_{\text{c}}^\infty(X \times \R^{2N}; \pi_1^*\cA)=  C^\infty(X, \cA)\otimes
  C_c^{\infty}(\R^{2N})$. So the lemma follows from Bott periodicity for
  the $K$-theory of (smooth) operators algebras. 
\end{proof}

For the  fiber bundle $\phi:Y\longrightarrow X$ of compact manifolds, 
we know that there is an embedding $i:Y\longrightarrow X \times \R^{N}$,
cf\@.~\cite{Atiyah-Singer1} \S 3. The {fibrewise} differential is an
embedding $Di:T(Y/X)\longrightarrow X\times\R^{2N}$ with complex normal
bundle $\cN$.

Let $\cA$ be the smooth Azumaya algebra over $X$ as defined earlier in 
\S\ref{SmoothAzbun}; there is a fixed trivialization of $\phi^*\cA.$ 
Let $ \cA_\cN$ be the lift of $\phi^*\cA$ to $\cN$. Let
$\rho:T(Y/X)\longrightarrow Y$ be the projection map. Then since
$\phi^*\cA$ is trivialized, we have the commutative diagram 
\begin{equation}
\begin{CD} \Kt_c^0(T(Y/X), \rho^*\phi^* \cA)   @>Di_!:>>  \Kt_c^0 (\cN,  \cA_\cN)   \\         
      @V{\cong_{\kappa^{-1}}}VV          @VV{\cong} V     \\
 \Kt_c^0(T(Y/X))    @>Di_!:>>  \Kt_c^0 (\cN),    
\end{CD}
\label{Extra}\end{equation}
where $Di_!$ in the lower horizontal arrow 
is given by $\xi = (\xi^+, \xi^-, G) \longmapsto \pi^*\xi \otimes (\pi^* {\cS^+},
\pi^*{\cS^-}, c(v))$. 
Here $\xi=(\xi^+,\xi^-)$ is pair of vector bundles over $T(Y/X),$
$G:\xi^+\longrightarrow \xi^-$ a bundle map between them which is an
isomorphism outside a compact subset and $(\pi^* {\cS^+}, \pi^*{\cS^-}, c(v))$
is the usual Thom class of the complex vector bundle $\cN,$ where 
$\pi$ is the projection map of $\cN$ and $\cS^\pm$ denotes the bundle
of half spinors on $\cN$. On the the
right hand side the the graded pair of vector bundle data is  
\begin{equation*}
(\pi^*\xi^+\otimes\pi^*\cS^+\oplus\pi^*\xi^-\otimes\pi^*\cS^-,
\pi^*\xi^+\otimes\pi^*\cS^-\oplus\pi^*\xi^-\otimes\pi^*\cS^+)
\label{Extra2}\end{equation*}
with map between them being
\begin{equation*}
\left[\begin{matrix}
G&c(v)\\ c(v)&G
\end{matrix}\right],\ v\in \cN.
\end{equation*}
This is an isomorphism outside a compact subset of $\cN$ and defines
a class in $\Kt_{\text{c}}^0 (\cN)$
which is independent of choices, provided the trivialization of $\phi^*(\cA)$ is
kept fixed. 
Then the usual {Thom isomorphism} theorem
asserts that $\;Di_!\;$ is an isomorphism. The upper horizontal arrow 
is defined in the same way by tensoring with the same Thom class.

Now, $\cN$ is diffeomorphic to a tubular neighborhood $\cU$ of the image of
$Y;$ let $\Phi: \cU \longrightarrow \cN$ denote this diffeomorphism. 
Then the induced map in $\Kt$-theory gives isomorphisms, 
$$
\Phi^* : \Kt_c ^0 (\cN) \cong \Kt_c^0  (\cU), \qquad \Phi^*:\Kt_c^0  (\cN,  \cA_\cN)\cong
\Kt_c^0  (\cU,  \Phi^*( \cA _\cN)). 
$$

We will next show that the inclusion $i_\cU: \cU \to X \times R^{2N}$ 
of the open set $\cU$ in $X \times R^{2N}$ induces a natural extension map
$$
(i_\cU)_! : \Kc^0 (\cU,  \Phi^*( \cA_\cN))\longrightarrow\Kc^0(X\times R^{2N},
\pi_{1}^{*} \cA)
$$ 
To see this, we need to show that the restriction $i_\cU^*\pi_1^* \cA$ is
trivialized.  Note that $\phi \circ \tau = \pi_1 \circ i_\cU$, where
$\tau : \cU\longrightarrow Y$ is equal to the composition, $\lambda
\circ\Phi $, and $\lambda: \cN\longrightarrow Y$ the projection map. Since
$(\phi \circ \tau)^*\cA = \tau^*\phi^*\cA$ is trivializable because
$\phi^*\cA$ is trivialized, it follows that $i_\cU^*\pi_1^* \cA$ is trivialized.

We have the
following commutative diagram.

\begin{equation} \label{ind_t}
\xymatrix @=3pc { 
\Kt_c^0(T(Y/X)) \ar@{->}[r]^{Di_!}_{\cong} \ar@{->}[d]^\cong_{\rho^*\kappa_*}& \Kt_c^0  (\cN) \ar@{->}[d]^{\cong}  \ar@{->}[r]^{\Phi^*}_{\cong}   &
 \Kt_c^0  (\cU)  \ar@{->}[d]^{\cong} 
 \\
\Kt_c^0(T(Y/X), \rho^*\phi^* \cA) \ar@{->}[ddrr]_{\widetilde\ind_t}
\ar@{->}[r]^{Di_!}_{\cong}  &  \Kt_c^0 (\cN,  \cA_\cN) \ar@{->}[r]^{\Phi^*}_{\cong}  & \Kt_c^0 (\cU,  \Phi^*( \cA_\cN))
 \ar[d]^{(i_\cU)_!} \\& & \Kc^0(X\times R^{2N}, \pi_{1}^{*} \cA)
  \ar[d]^{j_!^{-1}}_{\cong} \\& &  \Kc^0(X, \cA).
}
\end{equation}

%


The composition of the maps in the diagram above defines the Gysin map in twisted K-theory,
$$
Di_! :\Kc^0(T(Y/X)) \longrightarrow\Kc^0(X\times\R^{2N},
\pi_1^* \cA).
$$
Here we have used the fact that since $\pi = \pi_1 \circ i$ it follows that
$Di^*\pi_1^* \cA = \rho^*\phi^* \cA $ is trivialized. Now define the
\emph{topological index}, as the map
\begin{equation}\label{defn:indt}
\ind_t=j_!^{-1}\circ Di_! : \Kc^0(T(Y/X))
\longrightarrow \Kt^0(X; \cA),
\end{equation}
where we apply the Thom isomorphism in Lemma~\ref{Bott} to see that the
inverse $j_!^{-1}$ exists. We also note that $\widetilde\ind_t \circ
\rho^*\kappa_* = \ind_t,$ consistent with the corresponding analytic
indices.

The source is untwisted since $\cA$ is trivialized by $\kappa,$ as an
Azumaya bundle, when pulled back to $Y.$ The identification of twisted and
untwisted K-theory in \eqref{mms2002.55} depends on the choice of
trivialization \eqref{mms2002.87} but then so does the Azumaya bundle and
these choices do not change the index map $\widetilde\ind_{\text{t}}.$

\section{Twisted Chern character}\label{TwistedChern}

First we recall an explicit formula for the odd Chern character in the
untwisted case. For any compact manifold (of positive dimension), $Z,$ the
group of invertible, smoothing, perturbations of the identity operator
\begin{equation}
G^{-\infty}(Z)=\{a\in\Psi^{-\infty}(Z);\exists\ (\Id+a)^{-1}=\Id+b,\
b\in\Psi^{-\infty}(Z)\} 
\label{mms2002.151}\end{equation}
is classifying for odd K-theory. So there is a canonical identification of the
odd K-theory of a compact manifold $X$ with the (smooth) homotopy classes
of (smooth) maps 
\begin{equation}
\Ko(X)=[X;G^{-\infty}(Z)].
\label{mms2002.152}\end{equation}

The odd Chern character is then represented in deRham cohomology by the
pull-back of the universal Chern character on $G^{-\infty}(Z):$ 
\begin{equation}
\begin{aligned}
\Ch=\sum\limits_{k=0}^\infty &c_k\tr\left((A^{-1}dA)^{2k+1}\right)\\
&=-\frac1{2\pi i}
\int_0^1\tr\left((A^{-1}dA)\exp(\frac{t(1-t)}{2\pi i}(A^{-1}dA)^2)\right)dt,\
A=\Id+a.
\end{aligned}
\label{mms2002.153}\end{equation}
Here $dA=da,$ as for finite dimensional Lie groups, is the natural
identification of $T_aG^{-\infty}$ with $\Psi^{-\infty}(Z)$ coming from the
fact that $G^{-\infty}(Z)$ is an open (and dense) set in
$\Psi^{-\infty}(Z).$ Thus for an odd K-class
\begin{equation}
\begin{aligned}
a:X\longrightarrow &G^{-\infty}(Z),\ \Ch([a])=[a^*\Ch]\in
\operatorname{H}^{\odd}(X),\\
&a^*\Ch=-\frac1{2\pi i}
\int_0^1\tr\left((\Id+a)^{-1}da
\exp(\frac{t(1-t)}{2\pi i}((\Id+a)^{-1}da)^2\right)dt
\end{aligned}
\label{mms2002.154}\end{equation}
where now the differential can be interpreted in the usual way for
functions valued in the fixed vector space $\Psi^{-\infty}(Z).$

For any fiber bundle $\phi:Y\longrightarrow X,$ with typical fiber $Z,$ 
$\Ko(X)$ is also naturally identified with the abelian group of homotopy classes
of sections of the bundle of groups over $X$ with fiber $G^{-\infty}(Z_x)$
at $x\in X.$ That is, the twisting by the diffeomorphism group does not
affect this property. The formula \eqref{mms2002.154} can be extended to
this geometric setting by choosing a connection on $\phi,$ i\@.e\@.~a lift of
vector fields from $X$ to $Y.$ Indeed, such a connection can be identified
as a connection on the bundle $\CI(Y/X),$ with fibres $\CI(Z_x)$ (and space
of sections $\CI(Y)),$ as a differential operator
\begin{equation}
\begin{aligned}
\nabla:\CI(Y)\longrightarrow
&\CI(Y;\phi^*T^*X),\\
&\nabla(hg)=(dh)g+h\nabla g,\ h\in\CI(X),\ g\in\CI(Y).
\end{aligned}
\label{mms2002.155}\end{equation}
The curvature of such a connection (extended to a superconnection),
is a first order differential operator on the fibres
$w=\nabla^2/2\pi i\in\Diff1(Y/X;\bbC,\phi^*\Lambda ^2X)$ from the trivial
bundle to the 2-form bundle lifted from the base. The connection on $Y$
induces a connection on $\Psi^{-\infty}(Y/X),$ as a bundle of operators on
$\CI(Y/X),$ acting by conjugation and then
\eqref{mms2002.154} is replaced by
\begin{multline}
\Ch(A)=\\
-\frac1{2\pi i}
\int_0^1\tr\bigg((A^{-1}\nabla A)
\exp\big((1-t)w+tA^{-1}wA+\frac{t(1-t)}{2\pi i}(A^{-1}\nabla A)^2\big)\bigg)dt,
\\
A:X\longmapsto G^{-\infty}(Y/X),\ \pi\circ A=\Id.
\label{mms2002.156}\end{multline}
Note that any such section is homotopic to a section which is a finite rank
perturbation of the identity, in which case \eqref{mms2002.156} becomes the
more familiar formula. The same conclusions, and formula hold, if the
bundle of groups of smoothing perturbations of the identity acting on a
vector bundle over $Y,$ $G^{-\infty}(Y/X;E),$ is considered, provided the
connection (and curvature) are lifted to a connection on $E.$

Note that \eqref{mms2002.156} can also be considered as the pull-back of a
universal form on the total space of the fibration $G^{-\infty}(Y/X).$ It
then has the property that restricted to a fiber, so that the curvature
vanishes, one recovers the original form in \eqref{mms2002.153}.

The case of immediate interest arises from a circle bundle $p:\tilde
L\longrightarrow X.$ As explained in \S\ref{SmoothAzbun} we consider the
fiber product $\tilde L^{[2]}$ fibred over $\tilde L$ with the fibres taken
to be in the second factor, with the smoothing operators acting on sections
of $J.$ Of course these operators are acting on the restriction of this
line bundle to each fiber, which is a circle, so they can always be
identified on each fiber with ordinary smoothing operators. On the other
hand $J$ has the primitivity property of Lemma~\ref{mms2002.71} which
allows us to identify the smoothing operators on sections of $J$ on the
fibres of $\tilde L^{[2]}$ with $\CI(\tilde L^{[3]};\pi_F^*J)$ as in
Proposition~\ref{mms2002.76}. An explicit fiber density factor is not needed
since this is supplied naturally by the Hermitian structure.

\begin{proposition}\label{mms2002.163} Suppose $a\in\CI(\tilde L^{[2]};J)$ is
  such that $A=\Id+a$ is everywhere invertible over $X.$ Then the odd Chern
  character of $\Id+a,$ as a form on $\tilde L$ computed with respect to a
  unitary connection on $L$ and the primitive connection of
  Lemma~\ref{mms2002.64} on $J,$ with combined curvature $\Omega,$
\begin{multline}
\Ch_{\cA}(A)=\\
-\frac{1}{2\pi i}\int_0^1\tr\left((A^{-1}\nabla A)
\exp\left((1-t)\Omega +tA^{-1}\Omega A+t(1-t)(A^{-1}\nabla A)^2\right)\right)dt\\
\in\CI(\tilde L;\Lambda^{\odd})
\label{mms2002.164}\end{multline}
is closed and satisfies the conditions in \eqref{mms2002.147}.
\end{proposition}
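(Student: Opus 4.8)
The plan is to verify the two assertions—closedness of $\Ch_{\cA}(A)$ and the two conditions \eqref{mms2002.147}—essentially by pulling back to $\tilde L$ and using the primitivity of $J$ together with the explicit form of the combined curvature $\Omega$ from Lemma~\ref{mms2002.64}. The starting point is that, by Proposition~\ref{mms2002.76}, lifting under $\pi_F$ identifies $\CI(\tilde L^{[2]};J\otimes\Omega_R)$ with the smoothing operators on the fibres of $\tilde L^{[2]}\to\tilde L$ acting on sections of $J'$. So \eqref{mms2002.164} is literally an instance of the universal transgression formula \eqref{mms2002.156} applied to this fibration of smoothing operators over $\tilde L$, with the connection being the one induced by the chosen unitary connection on $L$ on the base circle bundle together with the primitive connection $\nabla^J$ on $J$; its curvature is the operator-valued $2$-form $\Omega$. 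Closedness is then immediate from the corresponding statement for \eqref{mms2002.156}, which is the standard fact that the universal odd Chern character form is closed; nothing twisted enters here.

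\medskip
\noindent The real content is checking the two equivariance conditions in \eqref{mms2002.147} with respect to the infinitesimal generator $\pa/\pa\theta$ of the $\UU(1)$-action on $\tilde L$. Here I would exploit the way the $\UU(1)$-action interacts with the data: the fibre-shift character $s$ of \eqref{mms2002.89} satisfies $d\log s=p_1^*\gamma-p_2^*\gamma$, and under the diagonal $\UU(1)$-action on $\tilde L^{[2]}$ the character $s$ is invariant, while $m^*\gamma=i\,d\theta+\gamma$. The combined curvature is, by Lemma~\ref{mms2002.64} (in the circle-bundle normalization of \S\ref{Circlecase}), of the form $\Omega=w+\omega_J$ where $w$ is the fibre curvature of the smoothing-operator bundle over $\tilde L$ and $\omega_J=\overline{\alpha}\wedge\frac{1}{2\pi i}(p_1^*\gamma-p_2^*\gamma)$; the key feature is that $\iota_{\pa/\pa\theta}$ applied to the relevant pull-backs of $\gamma$ produces exactly the factor $\frac{p^*\overline{\alpha}}{2\pi}$. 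So the plan for the second condition in \eqref{mms2002.147} is: compute $\iota_{\pa/\pa\theta}$ of the integrand of \eqref{mms2002.164} term by term, observing that $\pa/\pa\theta$ annihilates $A$ and $A^{-1}\nabla A$ in the "horizontal" directions (since $a$ descends from $\tilde L^{[2]}$ and the connection is $\UU(1)$-equivariant), so the contraction only hits the curvature exponential, and there it extracts precisely $\frac{p^*\overline{\alpha}}{2\pi}\wedge(\,\cdot\,)$ from the $\omega_J$ part of $\Omega$ (the $w$ part being vertical-free in the $d\theta$ direction). The first condition, $\cL_{\pa/\pa\theta}\Ch_{\cA}(A)=0$, then follows either directly from $\UU(1)$-invariance of all the ingredients, or from Cartan's formula $\cL_{\pa/\pa\theta}=d\iota_{\pa/\pa\theta}+\iota_{\pa/\pa\theta}d$ combined with closedness and the second condition.

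\medskip
\noindent The main obstacle I anticipate is bookkeeping in the contraction computation: one must be careful that $\Omega$ is operator-valued and that $A^{-1}\Omega A$ and $(1-t)\Omega+tA^{-1}\Omega A$ mix the scalar $d\theta$-bearing piece $\omega_J$ with conjugation by $A$, so that $\iota_{\pa/\pa\theta}$ of the conjugated term is $A^{-1}\big(\iota_{\pa/\pa\theta}\Omega\big)A=\frac{p^*\overline{\alpha}}{2\pi}\wedge\Id$ again (the scalar $2$-form $\omega_J$ contracts to a scalar $1$-form, which commutes past $A$). Once one sees that every term in $\iota_{\pa/\pa\theta}$ of the exponential's argument reduces to the \emph{same} central $1$-form $\frac{p^*\overline{\alpha}}{2\pi}$, one can pull it out of the trace and the $t$-integral, and what remains is exactly $\Ch_{\cA}(A)$ itself wedged on the left by $\frac{p^*\overline{\alpha}}{2\pi}$—which is the assertion. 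I would present this as a short lemma-level calculation, citing Lemma~\ref{mms2002.64} for the shape of $\Omega$ and Proposition~\ref{mms2002.76} for the identification with an honest smoothing-operator bundle over $\tilde L$, and then invoke Proposition~\ref{mms2002.146} to conclude that $\Ch_{\cA}(A)$ represents a class in the $\overline{\delta}$-twisted cohomology of $X$.
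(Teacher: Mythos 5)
Your proposal is correct and is essentially the paper's own argument: closedness is quoted from the untwisted theory, and both conditions in \eqref{mms2002.147} reduce to the structural facts you isolate --- in the operator picture over $\tilde L$ (via Proposition~\ref{mms2002.76}) the kernel is independent of the parameter fibre variable, the $d\theta_1$-components of the induced connection are central so that $\iota_{\pa/\pa\theta}(A^{-1}\nabla A)=0$, and the only $d\theta_1$-dependence of $\Omega$ is through the central scalar term $\frac1{2\pi i}\bar\alpha\wedge\gamma$ in \eqref{mms2002.180}, which conjugation by $A$ does not change, so the contraction pulls the same $\frac{p^*\bar\alpha}{2\pi}$ out of every term in the exponent. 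The paper packages the endgame slightly differently: working in a local trivialization where $u$ has a smooth logarithm it notes that the two terms of \eqref{mms2002.180} commute and factors $\Ch_{\cA}(A)=e^{\bar\alpha\wedge\gamma/2\pi i}\,v$ with $v$ pulled back from the base, i.e.\ the normal form \eqref{mms2002.150} of Proposition~\ref{mms2002.146}, whereas you verify \eqref{mms2002.147} by direct contraction and get the $\cL_{\pa/\pa\theta}$-condition from Cartan's formula --- the two finishes are interchangeable (your appeal to ``equivariance'' alone would only give the Lie-derivative condition; the contraction statement is exactly the centrality of the $d\theta_1$-part of the connection, which you do use).
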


\begin{proof} That the Chern form \eqref{mms2002.164} is closed follows
  from the standard properties. To see the other stated properties, we
  choose a section of $\tilde L$ over an open set $U\subset X$ over which
  $u$ has a smooth logarithm and set $f=\frac1{2\pi i}\log u.$ In terms of
  the induced trivializations
\begin{equation}
\tilde L_U=U\times\bbS,\ \tilde L^{[2]}_U=U\times\bbS\times\bbS,
\label{mms2002.168}\end{equation}
let the fiber variables be $\theta _1$ and $\theta _2.$ The operators are
acting in the $\theta_2$ variable and the lifted connection on $\tilde L$
as a fibration over $\tilde L$ is therefore 
\begin{equation}
\nabla=d_x+d_{\theta _1}+\overline{\gamma}\pa_{\theta_2}
\label{mms2002.178}\end{equation}
where $\overline{\gamma}\in\CI(U;\Lambda)$ is the local connection form for
$L.$ The corresponding connection on $\CI(\tilde L^{[2]}/\tilde L;J)$ in
terms of this trivialization and of the connection on $J$ from
Lemma~\ref{mms2002.64}
\begin{equation}
\nabla_J=d_x+d_{\theta _1}+\overline{\gamma}\pa_{\theta_2}+fd\theta
_1-\overline{\gamma}f.
\label{mms2002.179}\end{equation}
The curvature is
\begin{equation}
\Omega=\nabla_J^2/2\pi
i=\overline{\beta}(\pa_{\theta_2}-f)+
\frac1{2\pi i}\overline{\alpha}\wedge\gamma,\ \gamma=d\theta_1+\overline{\gamma}.
\label{mms2002.180}\end{equation}
In terms of this local trivialization $a=a(x,\theta_2,\theta_3)$ is
independent of the first (parameter) fiber variable. Inserting
\eqref{mms2002.180} into \eqref{mms2002.164} observe that the two terms
in \eqref{mms2002.180} commute so 
\begin{equation}
\Ch_{\cA}(A)=e^{\frac{\overline{\alpha}\wedge\gamma}{2\pi i}}v,\
v\in\CI(U;\Lambda ^{\odd})
\label{mms2002.181}\end{equation}
satisfies the conditions of \eqref{mms2002.147}.
\end{proof}

Note that under a deck transformation of $\hat X,$ i\@.e\@.~integral shift
of $f$ by $n\in\bbN,$ each term undergoes conjugation by $\exp(in\theta)$
and the Chern form itself is therefore unchanged. 

It follows from Proposition~\ref{mms2002.146} that $\Ch_{\cA}(A)$ defines
an element in the twisted cohomology of $X,$ given explicitly by the form $v$
in \eqref{mms2002.181}. Although the proof above is written out for
sections of $J$ over $\tilde L^{[2]}$ the passage to matrix-valued sections
is merely notational, so it applies essentially unchanged to elements of 
\begin{multline}
G(X;\cA\otimes M(N,\bbC))=\\
\{a\in\CI(\tilde L^{[2]};J\otimes M(N,\bbC);
\Id_{N\times N}+a(x)\text{ is invertible for all }x\in X\}.
\label{mms2002.182}\end{multline}

\begin{lemma}\label{mms2002.159} The Chern form \eqref{mms2002.164}
  descends to represent the twisted Chern character
\begin{equation}
G(X;\cA\otimes M(N,\bbC))/\kern-3pt\sim\ =\Ko(X;\cA)\longrightarrow\Ho(X;\delta )
\label{mms2002.157}\end{equation} 
where the equivalence relation on invertible matrix-valued sections of the
Azumaya bundle is homotopy and stability.
\end{lemma}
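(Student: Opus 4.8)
The plan is to check three independent facts: that the Chern form descends to a well-defined twisted-cohomology class, that this class depends only on the homotopy-and-stability equivalence class of the section $a$, and that the resulting map is a homomorphism representing the twisted Chern character.

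First I would establish that the class is well-defined. By Proposition~\ref{mms2002.163} (applied in its matrix-valued form, which is merely notational as already observed), the form $\Ch_{\cA}(A)$ is closed and satisfies the conditions \eqref{mms2002.147} with respect to the infinitesimal generator of the $\UU(1)$-action on $\tilde L$. Hence Proposition~\ref{mms2002.146} identifies its cohomology class with an element of $\Ho(X;\delta)$; explicitly it is represented by the form $v$ produced in \eqref{mms2002.181}. One must also check that the class is independent of the auxiliary choices, namely the unitary connection on $L$ and the primitive connection on $J$: for this I would use the standard transgression argument, noting that a path of connections produces a path of (super)connections and hence, by the usual Chern--Simons style computation, an explicitly exact correction term; since the correction is built from the same local data it again satisfies \eqref{mms2002.147}, so it is exact in the twisted complex.

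Next I would treat invariance under the equivalence relation. For homotopy: a smooth family $a_s$, $s\in[0,1]$, of invertible sections gives, by the transgression formula for the odd Chern character, a form $\eta$ on $X$ (locally of the shape \eqref{mms2002.181}) with $d\Ch_{\cA}(A_1)-d\Ch_{\cA}(A_0)$ replaced by $(d+\delta\wedge)$ of $v_1-v_0$, i.e.\ the two representatives differ by $(d+\delta\wedge)\eta$. For stability: enlarging $N$ by the block $\Id$ changes $A$ to $A\oplus\Id$, and $(A\oplus\Id)^{-1}\nabla(A\oplus\Id)=(A^{-1}\nabla A)\oplus 0$, so the trace in \eqref{mms2002.164} is unchanged; thus the map factors through $G(X;\cA\otimes M(N,\bbC))/\!\sim\ =\Ko(X;\cA)$. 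That this quotient really is $\Ko(X;\cA)$ in the $C^*$-algebraic sense is the content of the preceding discussion (Kuiper's theorem plus Morita invariance as in Lemma~\ref{TrivChange} and \S\ref{Stabiso}), which I would simply cite.

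Finally, the additivity $\Ch_{\cA}(A\oplus B)=\Ch_{\cA}(A)+\Ch_{\cA}(B)$ is immediate from block-diagonality of $A^{-1}\nabla A$ and of the curvature, so the map is a group homomorphism; and the normalization making it the genuine twisted Chern character follows by comparison with the untwisted formula \eqref{mms2002.156}, to which \eqref{mms2002.164} reduces on any fiber where the $\UU(1)$-curvature is killed and the twisting trivializes (cf.\ the remark after Proposition~\ref{mms2002.146} and the restriction-to-a-fiber property noted after \eqref{mms2002.156}). The main obstacle, such as it is, will be the bookkeeping in the transgression argument: one must verify that every correction and homotopy term stays inside the subcomplex cut out by \eqref{mms2002.147}, so that all the identities take place in the twisted deRham complex on $X$ rather than merely in the total deRham complex on $\tilde L$. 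Once the local form \eqref{mms2002.180}--\eqref{mms2002.181} is in hand this is routine, since every ingredient is manifestly built from $\overline{\alpha}$, $\overline{\beta}$, $\gamma$ and $a$, but it is the one place where care is genuinely required.
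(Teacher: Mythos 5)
Your treatment of the ``descends'' half of the statement \mhy\ closedness of \eqref{mms2002.164} together with the conditions \eqref{mms2002.147} via Proposition~\ref{mms2002.163}, homotopy invariance by viewing a homotopy as a family over $X\times[0,1]$, and stability by block-diagonality \mhy\ is essentially the paper's argument, and your extra check that the class does not depend on the choice of connections is a reasonable supplement. The gap is in the final step, which is really the content of the word ``represent'' in the lemma: you assert that the identification with the genuine twisted Chern character ``follows by comparison with the untwisted formula \eqref{mms2002.156}'' on fibres where the twisting trivializes. Agreement with the ordinary odd Chern character on trivially twisted data does not by itself identify your map with the topologically defined twisted Chern character: two natural transformations $\Ko(X;\cA)\longrightarrow\Ho(X;\delta)$ could agree whenever the twist is trivial and still differ for a nontrivial Dixmier--Douady class, so a uniqueness or characterization statement is required and you do not supply one.

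The paper closes exactly this gap by an argument you omit: the form \eqref{mms2002.164} is the pull-back of a universal form on the total space of the bundle of groups $G^{-\infty}(X;\cA)$, it restricts on the fibres to the untwisted odd Chern character \eqref{mms2002.153}, and it is compatible with the Thom isomorphism and the module structure (the diagram \eqref{mms2002.176}); one then invokes the characterization of Atiyah and Segal \cite{AtSe}, by which the (even) twisted Chern character is determined by universality under pull-back from the twisted $\PU$ bundle over $K(\bbZ,3)$, hence by its pull-back to $\bbS^3$ (transferred to $\bbS\times\bbS^2$), where multiplicativity and the fibrewise normalization pin it down, the odd case following by suspension. Your proposal contains the fibrewise normalization and additivity, but neither the multiplicativity over $\Kt^0(X)$ nor any universality argument, so the identification with ``the'' twisted Chern character is asserted rather than proved; to complete the proof you would need to add such a characterization step or reproduce the Atiyah--Segal reduction to the universal example.
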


\begin{proof} The invariance of the twisted cohomology class under
  stabilization follows directly from the definition. Invariance under
  homotopy follows as usual from the fact that the construction is
  universal and the form is closed, so is closed for a homotopy when
  interpreted as a family over $X\times[0,1]$ and this proves the
  invariance of the cohomology class.

It also follows directly from the definition that the twisted Chern
character behaves appropriately under the Thom isomorphism for a complex
(or symplectic) vector bundle $w:W\longrightarrow X.$ That is, there is a
commutative diagram with horizontal isomorphisms
\begin{equation}\xymatrix{
\Ko(X;\cA)\ar[r]^-{\boxtimes b}\ar[d]_{\Ch_{\cA}}
&\Ko(W;w^*\cA)\ar[d]^{\Ch_{\cA}}\\
\Ho(X;\delta)\ar[r]^-{\wedge\Td}
&\Hoc(W;w^*\delta)
}
\label{mms2002.176}\end{equation}

As in the case of the bundle of groups $G^{-\infty}(Y/X)$ the form
\eqref{mms2002.164} is again the pull-back from the total space of the
bundle of groups $G^{-\infty}(X;\cA)$ of invertible sections of the unital
extension of the Azumaya bundle and then restricting this universal form to
a fiber one again recovers the standard odd Chern character in
\eqref{mms2002.153}. This is enough to show that the Chern form here does
represent the twisted Chern character as widely discussed in the
literature, for instance recently by Atiyah and Segal in
\cite{AtSe}. Namely they remark that the Chern character as they describe
it (in the even case), which is determined by universality under pull-back
from the twisted $\PU$ bundle over $K(\bbZ,3),$ is actually determined by
its pull-back to the 3-sphere. The $\PU$ bundle over $\bbS^3$ with
generating DD class is trivial over points and so can be transferred to
$(0,\pi)\times\bbS^2$ to be trivial outside a compact set and thence to
$\bbS\times\bbS^2$ where it reduces to the twisted bundle again with
generating DD class. As shown in \cite{AtSe}, the universal twisted Chern
character over the $3$-sphere is determined by multiplicativity and the
fact that it restricts to the standard Chern character on the fibres over
points. The odd case follows by suspension so the deRham version of the
Chern character above does correspond with more topological definitions.
\end{proof}

We need some extensions of this discussion of the odd twisted Chern
character. In particular we need to discuss the even case. However, the
context needs to be broadened to cover operators on the fibres of a
trivializing bundle $Y$ as in \eqref{mms2002.115} --
\eqref{mms2002.87}. Finally the relative case is needed for the discussion
of the Chern character of the symbol and the index formula in twisted
cohomology. Fortunately these are all straightforward generalizations of
the untwisted case.

We start with the extension of the odd twisted Chern character to the more
general geometric case under discussion here. Thus, instead of being over
$\tilde L^{[2]},$ the bundle $J$ is defined over $Y^{[2]}.$ Still, when
lifted to the fiber product
\begin{equation}
\tilde Y^{[2]}=\tilde L\times_XY^{[2]},
\label{mms2002.186}\end{equation}
$J$ is reduced to a the exterior tensor product 
\begin{equation}
p^*J=\tilde J\boxtimes\tilde J'\text{ over }\tilde Y^{[2]}
\label{mms2002.188}\end{equation}
where $\tilde J$ is a line bundle over $\tilde Y=\tilde L\times_X Y.$ Namely,
there is a character property for $s:Y^{[2]}\longrightarrow \UU(1),$ which
is determined by the trivialization of $L$ over $Y,$ when lifted to $\tilde
Y^{[2]}:$ 
\begin{equation}
s(z_1,z_2)=\tilde s(z_1,\tilde l)\tilde s(z_2,\tilde l)^{-1},\ \tilde s:
\tilde Y\longrightarrow \UU(1).
\label{mms2002.187}\end{equation}
Here $\tilde s$ is fixed by the demand that it intertwine the
trivializations over $\tilde L$ and $Y.$ Thus, using $\tilde s$ to define
$\tilde J$ by the same procedure as previously used to define $J,$
\eqref{mms2002.188} follows.

From this point the discussion proceeds as before. That is, the Azumaya
bundle $\cA_Y$ acting on the fibres of $Y$ over $X$ lifts to $\tilde Y,$
acting on the same fibres but now over $\tilde L,$ into a subalgebra of
$\Psi^{-\infty}(\tilde Y/\tilde L;\tilde J).$ Then, as above, the odd Chern
character for invertible sections of the unital extension of the Azumaya
bundle is a differential form on $\tilde L$ satisfying \eqref{mms2002.147}
and so defines the twisted odd Chern character in this more general
geometric setting.

Next, consider the even twisted Chern character. To do so, recall that for
a complex vector bundle $E$ embedded as a subbundle of some trivial
$\bbC^N$ over a manifold $X$ the curvature, and Chern character, can be
written in terms of an idempotent $e$ projecting onto the range as the
2-form valued homomorphism $\omega_E=e(de)(1-e)(de)e/2\pi i.$ There is a
similar formula if $E$ is embedded in a possibly non-trivial bundle $F$
with connection $\nabla_F$ which is projected onto $E$ using $e.$ The same
formula applies in the case of a subbundle of $\CI(Y/X;E)$ given by a
family of idempotents $e.$ In the untwisted case, the K-theory of $X$ can
be represented by formal differences of finite rank idempotents in the
fibres of $\CI(Y/X;E),$ giving finite dimensional bundles. In general, in
the twisted case, the K-theory is interpreted as the $C^*$ K-theory of a
non-unital algebra (the completion of $\cA$ in the compacts), it is
necessary to take pairs of infinite rank idempotents in
$\bbC^N\otimes\cA^\dagger$ with differences valued in $\bbC^N\otimes\cA.$
In fact it is enough to take single idempotents in
$\bbC^N\otimes\cA^\dagger$ with constant unital part $e_0\in M(N,\bbC)$ and
consider the formal difference $e\ominus e_0$ to generate the K-theory. For
the untwisted case, the usual Chern character is given by 
\begin{equation}
\tr(\exp(\frac{\omega_E^2}{2\pi i})-e_0)
\label{mms2002.189}\end{equation}
as can be shown by suspension from the odd case if desired. Here all terms
in $\Lambda ^{>0}$ involve a derivative of $e$ and hence are smoothing, as
is the normalized term of form degree zero, so the trace functional can be
applied.

To carry this discussion of the even Chern character to the twisted case,
we can proceed precisely as above. Namely, given an idempotent section,
$e,$ of $\bbC^N\otimes\cA_Y^\dagger$ as a bundle over $X$ with constant
unital term $e_0$ one can compute the Chern form \eqref{mms2002.189} after
lifting the idempotent to $\bbC^N\otimes\Psi^{-\infty}(\tilde Y/\tilde
L;\tilde J\otimes\bbC^N)$ as discussed above. Then, for the same reason,
the form satisfies \eqref{mms2002.147} and defines the even Chern character
as a twisted deRham form on $X.$

The final extension is to the relative case to handle the Chern character
of the symbol of a pseudodifferential operator. As discussed in
\cite{Albin-Melrose} for any real vector bundle $W\longrightarrow Y$ (here
applied to $T^*(Y/X))$ the compactly supported cohomology of $W$ can be
obtained directly as from the relative deRham complex of $SW,$ the sphere
bundle of $W,$ and $Y.$ This involves the same odd Chern class on $SW$ (which is no
longer closed) and the even Chern class on $Y$ which `corrects' the failure
of the odd form to be closed. The extension to the twisted case just
combines the two cases discussed above; this is briefly considered in
\S\ref{sect:coh}.

\section{Semiclassical quantization}\label{scl}

To avoid the usual complications which arise in the proof of the index
theorem, especially concerning the multiplicativity of the analytic index
-- although they are no worse in the present twisted setting than the
standard one -- we introduce another definition of the index map using
semiclassical pseudodifferential operators. This approach is discussed in
more detail in \cite{fmtga} but the underlying notion of a semiclassical
family of pseudodifferential operators is well established in the
literature \cite{GrigSjos}. The method of `asymptotic morphism' of Connes
and Higson is closely related to the notion of semiclassical limit.

\begin{proposition}\label{mms2002.93} Let $\psi:M\longrightarrow B$ be a
  fibre bundle of possibly non-compact manifolds then the modules
$$
\Psi^\ell_{\comp,\scl}(M/B;\bbE)\subset
  \CI((0,1)_{\epsilon};\Psi^\ell_{\comp}(M/B;\bbE))
$$
of semiclassical families of classical, uniformly compactly-supported,
pseudodifferential operators on the fibres of $\psi$ are well defined for
any $\bbZ_2$-graded bundle $\bbE,$ have a global multiplicative exact
symbol sequence
\begin{equation}
0\longrightarrow
\epsilon \Psi^\ell_{\comp,\scl}(M/B;\bbE)\hookrightarrow 
\Psi^\ell_{\comp,\scl}(M/B;\bbE)\overset{\sigma _{\scl}}\longrightarrow  
S^{\ell}_{\comp}(T^*(M/B);\hom(\bbE))\longrightarrow 0
\label{mms2002.94}\end{equation}
and completeness property 
\begin{equation}
\bigcap_j\epsilon
^j\Psi^\ell_{\comp,\scl}(M/B;\bbE)=\dCI([0,1);\Psi^\ell_{\comp}(M/B;\bbE)).
\label{mms2002.96}\end{equation}
\end{proposition}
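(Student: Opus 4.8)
The plan is to reduce the proposition to the standard local theory of semiclassical pseudodifferential operators, as in \cite{GrigSjos}, organized invariantly along the lines of \cite{fmtga}; I only indicate the structure, since the substance is classical.

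First I would fix the local model. Over a fibre coordinate patch, in which $M/B$ is modelled on $B'\times\RR^n$ with fibre variable $x$, a semiclassical family of order $\ell$ is taken to be an operator
$$
(\operatorname{Op}_\epsilon(a)u)(x)=(2\pi\epsilon)^{-n}\int e^{i(x-y)\cdot\xi/\epsilon}\,a(x,\xi,\epsilon)\,u(y)\,dy\,d\xi ,
$$
with $a\in\CI([0,1)_\epsilon;S^\ell(\RR^n_x\times\RR^n_\xi))$ compactly supported in $x$ uniformly in $\epsilon$; equivalently the Schwartz kernel is $\epsilon^{-n}$ times a classical conormal distribution of order $\ell$ in the rescaled variable $(x-y)/\epsilon$, depending smoothly on $\epsilon$ up to $\epsilon=0$ and supported near the diagonal. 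The one point requiring work is coordinate invariance: under a fibrewise diffeomorphism such an operator is again of this form, with transformed full symbol given by the usual semiclassical asymptotic expansion, which is genuinely asymptotic because each term in the expansion of the phase and amplitude carries an extra power of $\epsilon$. Granting this, $\Psi^\ell_{\comp,\scl}(M/B;\bbE)$ is defined by patching over a locally finite cover of $B$ by local trivializations with a partition of unity, the $\bbZ_2$-grading merely indexing the matrix entries; the uniform compactness of supports in $M\times_B M$ makes the composition of two such families well defined and again of this form. Invariantly, one may instead describe $\Psi^\ell_{\comp,\scl}(M/B;\bbE)$ as the conormal distributions of order $\ell$, conormal to the lifted diagonal, on the manifold with corners obtained from $[0,1)_\epsilon\times(M\times_B M)$ by blowing up $\{\epsilon=0\}\times\Diag_{M/B}$, smooth up to the front face of this blow-up and vanishing to infinite order at the lift of $\{\epsilon=0\}$; the semiclassical symbol is then restriction to the front face, which (after fibrewise Fourier transform) gives an element of $S^\ell_\comp(T^*(M/B);\hom(\bbE))$.

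With the framework in place, the exact sequence \eqref{mms2002.94} is nearly formal. The map $\sigma_\scl$ is evaluation of the full symbol at $\epsilon=0$, equivalently restriction to the front face, and is independent of all choices by the coordinate invariance just discussed. Injectivity of the inclusion $\epsilon\Psi^\ell_{\comp,\scl}\hookrightarrow\Psi^\ell_{\comp,\scl}$ is immediate, and exactness in the middle is Taylor's theorem: if the full symbol vanishes at $\epsilon=0$ it is $\epsilon$ times a smooth family of order-$\ell$ symbols, so the operator lies in $\epsilon\Psi^\ell_{\comp,\scl}(M/B;\bbE)$, and the reverse inclusion is clear. Surjectivity is quantization: given $s\in S^\ell_\comp(T^*(M/B);\hom(\bbE))$, a partition of unity and local quantizations produce a semiclassical family with $\sigma_\scl=s$. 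Multiplicativity, $\sigma_\scl(AB)=\sigma_\scl(A)\,\sigma_\scl(B)$, follows from the semiclassical composition formula, the leading ($\epsilon^0$) term of the composed symbol being the pointwise product and all corrections carrying at least one power of $\epsilon$. Finally, for \eqref{mms2002.96}: an element of $\bigcap_j\epsilon^j\Psi^\ell_{\comp,\scl}(M/B;\bbE)$ has full symbol vanishing to infinite order at $\epsilon=0$, hence (Taylor again) its symbol lies in $\dCI([0,1);S^\ell_\comp)$ and the operator is a smooth family of classical fibrewise pseudodifferential operators of order $\ell$ all of whose $\epsilon$-derivatives vanish at $\epsilon=0$, i.e. an element of $\dCI([0,1);\Psi^\ell_\comp(M/B;\bbE))$; conversely any such family is trivially semiclassical and vanishing to all orders, giving the reverse inclusion.

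The only genuinely technical step is thus the coordinate invariance of the local model and, with it, the verification that the globalized class is stable under composition when $B$ and the fibres are non-compact — that is, that the ``uniformly compactly supported'' condition is preserved; once this is settled the symbolic and algebraic assertions are routine, and full details may be found in \cite{fmtga}.
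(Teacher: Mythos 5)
Your sketch is correct and is essentially the standard construction the paper itself relies on: in fact the paper gives no proof of Proposition~\ref{mms2002.93}, deferring exactly as you do to \cite{GrigSjos} and \cite{fmtga}, and only records the kernel characterization \eqref{mms2002.97} for $\ell=-\infty$, which is precisely the rescaled-diagonal/front-face picture underlying your blow-up description and your symbol map. The step you rightly identify as the technical core --- fibrewise coordinate invariance of the local model (equivalently, well-definedness of the conormal class on the blown-up double space) together with preservation of uniform compact supports under composition --- is likewise left to \cite{fmtga} by the paper, so your outline does not diverge from the intended argument.
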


\noindent Note that the space of functions on the right in
\eqref{mms2002.94} consists of the \emph{global} classical symbols on
$T^*(M/B),$ with compact support in the base $M,$ not the quotient by the
symbols of order $\ell-1.$ The space on the right in \eqref{mms2002.96}
consists of the smooth families of pseudodifferential operators with
uniformly compact support in the usual sense, depending smoothly on the
additional parameter $\epsilon \in[0,1)$ down to $\epsilon =0$ where they
vanish with all derivatives. Thus, by iteration, the semiclassical symbol
in \eqref{mms2002.94} captures the complete behaviour of these operators
as $\epsilon \downarrow0.$

To define the semiclassical index maps, one for each parity, we only need
the smoothing operators of this type, for $\ell=-\infty;$ indeed this is
the key to their utility. In this special case the Schwartz kernels of the
operators are easily described explicitly. Namely they correspond to the
subspace of $\CI((0,1)\times M^{[2]}_{\psi};\Hom(\bbE)\otimes\Omega _R))$
consisting of those functions which have support in some set $(0,1)\times
K$ with $K\subset M^{[2]}_{\psi}$ compact, which tend to $0$ rapidly with
all derivatives as $\epsilon \downarrow0$ in any closed set in
$M^{[2]}_{\psi}$ disjoint from the diagonal and which near each point of
the diagonal take the from
\begin{equation}
\epsilon ^{-d}K(\epsilon,b,z,z',\frac{z-z'}\epsilon)|dz'|
\label{mms2002.97}\end{equation}
where $K$ is a smooth bundle homomorphism which is uniformly Schwartz in
the last variable and $d$ is the fiber dimension.

As with usual pseudodifferential operators, there is no obstruction to
defining $\Psi^{\ell}_{\scl}(Y/X;\cA\otimes\bbE)$ either by transferring
the kernels directly to sections of $J\otimes\Hom(\bbE)$ over $Y^{[2]}$ or
by using the local form \eqref{mms2002.69}.

\begin{proposition}\label{mms2002.98} The space of invertible elements in
  the unital extension of the semiclassical twisted smoothing operators
  defines an odd index map via the diagram
\begin{equation}
\xymatrix@C=-1pc{
&\bigcup_N\left\{(A,B)\in\Psi^{-\infty}_{\scl}(M/B;\cA\otimes\bbC^N);
(\Id+A)^{-1}=\Id+B\right\}
\ar[dl]_{[\Id+\sigma_{\scl}(A)]\quad}\ar[dr]^{[(\Id+A)\big|_{\epsilon=\ha}]}
\\
\Kt^1_{\comp}(T^*(Y/X))\ar[rr]^{\ind^1_{\scl}}&&\Kt^1_{\comp}(X;\cA).
}
\label{mms2002.99}\end{equation}
\end{proposition}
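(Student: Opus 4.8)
The plan is to realise $\ind^1_{\scl}$ as the twisted analogue of the semiclassical (asymptotic morphism) index of \cite{fmtga}. The two downward maps in \eqref{mms2002.99}, $(A,B)\mapsto[\Id+\sigma_{\scl}(A)]$ and $(A,B)\mapsto[(\Id+A)\big|_{\epsilon=\ha}]$, are homomorphisms under block (stabilised) sum, so it suffices to prove (i) the symbol map is onto $\Kt^1_{\comp}(T^*(Y/X))$ and (ii) its kernel is contained in that of the evaluation map; then $\ind^1_{\scl}$ is the induced map and the triangle commutes by construction. The first thing to record is that the $\cA$-twist is inert throughout: by the local presentation \eqref{mms2002.69} (or by transferring kernels directly to sections of $J\otimes M(N,\bbC)\otimes\Omega_R$ over $Y^{[2]}$ as in Definition~\ref{def:Azumaya}) the transition functions of $\Psi^{-\infty}_{\scl}(Y/X;\cA\otimes\bbC^N)$ are multiplications by $\UU(1)$-valued functions, which preserve the semiclassical filtration and act trivially on semiclassical symbols; hence Proposition~\ref{mms2002.93}, in particular the exact sequence \eqref{mms2002.94} and the completeness \eqref{mms2002.96}, holds verbatim in the twisted $\ell=-\infty$ case, and --- exactly as in Lemma~\ref{mms2002.80} --- the semiclassical symbol lands in the \emph{untwisted} space $S^{-\infty}_{\comp}(T^*(Y/X);M(N,\bbC))$ because $J$ is canonically trivial on the fibre diagonal. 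This pins down the two targets in \eqref{mms2002.99}, $(\Id+A)\big|_{\epsilon=\ha}$ being an invertible element of the unital extension of $\CI(Y^{[2]};J\otimes M(N,\bbC)\otimes\Omega_R)$, i.e.\ of $G(X;\cA\otimes M(N,\bbC))$, with a well-defined class in $\Kt^1_{\comp}(X;\cA)$ by Morita invariance in $N$.

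For surjectivity I would start from an invertible symbol $\Id_N+\sigma$, quantise it by \eqref{mms2002.94} to some $A_0\in\Psi^{-\infty}_{\scl}(Y/X;\cA\otimes\bbC^N)$ with $\sigma_{\scl}(A_0)=\sigma$, and run the semiclassical elliptic parametrix construction: iterating \eqref{mms2002.94} produces $\Id_N+B_0$ with both remainders $\Id_N-(\Id_N+A_0)(\Id_N+B_0)$, $\Id_N-(\Id_N+B_0)(\Id_N+A_0)$ lying in $\bigcap_j\epsilon^j\Psi^{-\infty}_{\scl}=\dCI([0,1)_\epsilon;\Psi^{-\infty}(Y/X;\cA\otimes\bbC^N))$ by \eqref{mms2002.96}; these extend smoothly to $\epsilon=0$ and vanish there, so $\Id_N+A_0$ is invertible on some $(0,\epsilon_0]$. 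Rescaling $\epsilon\mapsto\epsilon_0\epsilon$ --- which keeps us inside $\Psi^{-\infty}_{\scl}$ by the explicit kernel normal form \eqref{mms2002.97} and only changes the symbol by the fibrewise dilation $\zeta\mapsto\epsilon_0\zeta$ of $T^*(Y/X)$, hence not its class in $\Kt^1_{\comp}(T^*(Y/X))$ --- yields $\Id_N+A$ invertible on all of $(0,1)_\epsilon$ with $[\Id_N+\sigma_{\scl}(A)]=[\Id_N+\sigma]$. The same rescaling remark shows that the evaluation map is independent of the chosen $\epsilon\in(0,1)$: for any $\Id+A$ invertible on $(0,1)$ the curve $\epsilon\mapsto(\Id+A)\big|_\epsilon$ is a path in $G(X;\cA\otimes M(N,\bbC))$.

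For (ii) the key elementary point is that if $\sigma_{\scl}(C)=0$ then $C\in\epsilon\Psi^{-\infty}_{\scl}$, so the asymptotic sum of $\sum_k(-C)^k$ is a parametrix for $\Id+C$ with remainder in $\dCI([0,1)_\epsilon;\Psi^{-\infty}(Y/X;\cA\otimes\bbC^N))$, whence $\Id+C$ is invertible for $\epsilon$ small, uniformly over any compact family of such $C$. Now given $[\Id_N+\sigma_{\scl}(A)]=0$, after stabilising $N$ choose a homotopy of invertible symbols $\Id_N+\sigma_t$ with $\sigma_0=\sigma_{\scl}(A)$ and $\sigma_1=0$, lift it by $A_t=A+Q(\sigma_t-\sigma_0)$ for a fixed quantisation $Q$ (so $\sigma_{\scl}(A_t)=\sigma_t$, $A_0=A$), and apply the parametrix of Proposition~\ref{mms2002.93} uniformly in $t$ (to $Y\times[0,1]\to X\times[0,1]$) to get $\epsilon_1\in(0,1)$ with $\Id_N+A_t$ invertible on $(0,\epsilon_1]$ for all $t$. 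Then $t\mapsto(\Id_N+A_t)\big|_{\epsilon_1}$ is a path in $G(X;\cA\otimes M(N,\bbC))$, so by the $\epsilon$-independence above $[(\Id+A)\big|_{\epsilon=\ha}]=[(\Id_N+A_1)\big|_{\epsilon_1}]$; and since $\sigma_{\scl}(sA_1)=s\sigma_{\scl}(A_1)=0$ for all $s\in[0,1]$, the family $\Id_N+sA_1$ is invertible near $\epsilon=0$ uniformly in $s$, so $s\mapsto(\Id_N+sA_1)\big|_{\epsilon}$ (for $\epsilon$ small) is a path from $\Id_N$ to $(\Id_N+A_1)\big|_{\epsilon}$, forcing $[(\Id_N+A_1)\big|_{\epsilon_1}]=0$ and hence $[(\Id+A)\big|_{\epsilon=\ha}]=0$.

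I expect the only delicate aspect to be bookkeeping rather than a genuine obstacle: one must carry out the uniform-in-parameters semiclassical parametrix construction with care, and, more importantly, check systematically that the Azumaya twist does not interfere anywhere --- i.e.\ that the local trivialisations \eqref{mms2002.69} are by multiplication operators, so that the semiclassical symbol map, its exactness and completeness \eqref{mms2002.94}--\eqref{mms2002.96}, and the $\epsilon\downarrow0$ asymptotics of the kernels \eqref{mms2002.97} are all unchanged. Everything else is a direct transcription of the untwisted semiclassical calculus of \cite{fmtga, GrigSjos} together with homotopy-lifting arguments of the same flavour as those used for the Gysin maps in \S\ref{sect:top ind}, and the even-parity index map $\ind^0_{\scl}$ is obtained in the same way, or by suspension.
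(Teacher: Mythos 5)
Your proposal is correct and follows essentially the same route as the paper's own (much terser) argument: the symbol of an invertible semiclassical family is invertible because the inverse is of the same form, surjectivity comes from exactness of the semiclassical symbol sequence plus invertibility for small $\epsilon$ and a rescaling of the parameter, and well-definedness of the induced map is the standard homotopy-and-stability argument, with the Azumaya twist inert because the local trivializations act by multiplication and the symbol is untwisted. You have merely filled in the bookkeeping (uniform parametrices, $\epsilon$-independence of the evaluation, reduction to the trivial symbol class) that the paper leaves to the reader.
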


\begin{proof} The space on the top line in \eqref{mms2002.99} consists of
  the invertible perturbations of the identity by semiclassical smoothing
  operators, with the inverse of the same form. Thus it follows that
  $\Id+\sigma _{\scl}(A)$ is invertible as a smooth family of $N\times N$
  matrices over $T^*(Y/X),$ reducing to the identity at infinity. It
  therefore defines an element of odd K-theory giving the map on the
  left. Conversely, the invertibility of $\Id+a$ for a symbol $a$ implies,
  using the exactness of the symbol sequence, that $\Id+A,$ where
  $\sigma_{\scl}(A)=a,$ is invertible at least for small $\epsilon.$
  Modifying the semiclassical family to remain invertible for $\epsilon
  \in(0,1)$ shows that this map is surjective. The map on the right,
  defined by restriction to $\epsilon =\ha$ (or any other positive value)
  immediately gives an element of the odd twisted K-theory of the base.
 
To see that the `odd semiclassical index', or push-forward map, is defined
from this diagram it suffices to note that the `quantized' class on the
right only depends on the class on the left up to homotopy and stability,
which as usual follows directly from the properties of the algebra.
\end{proof}

For this odd index there is a companion even index map. Recall that a
compactly supported K-class can be defined by a smooth map into $N\times N$
matrices which takes values in the idempotents and is constant outside a
compact set, where the class can be identified with the difference of the
projection and the limiting constant projection.

\begin{proposition}\label{mms2002.101} If $a\in\CIc(T^*(Y/X);\bbC^N)$ is
  such that $\Pi_\infty+a$ takes values in the idempotents, where $\Pi_\infty\in
  M(N,\bbC),$ then $a$ has a semiclassical quantization 
\begin{equation}
A\in\Psi^{-\infty}_{\scl}(Y/X;\cA\otimes\bbC^N),\ \sigma _{\scl}(A)=a,
\label{mms2002.102}\end{equation}
such that $(\Pi_{\infty}+A)^2=\Pi_{\infty}+A$ and this leads to a well-defined even
semiclassical index map 
\begin{equation}
\xymatrix@1{
**[l]\Kt^0_{\comp}(T^*(Y/X))\ar@<1ex>[r]^-{\ind^0_{\scl}}
\ar@<-1ex>[r]^-=_-{\ind^0_{\text{a}}}&\Kt^0_{\comp}(X;\cA)
}
\label{mms2002.103}\end{equation}
analogous to \eqref{mms2002.99} and as indicated, equal to the analytic
index as defined in \S\ref{sect:analytic ind}.
\end{proposition}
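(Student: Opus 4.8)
The plan is to transcribe to the twisted semiclassical setting the standard construction of a $K$-class from an idempotent symbol: first take a rough quantization of $a$, then correct it to an \emph{exact} idempotent by a Riesz-projection argument. The only structure needed is the multiplicative exact sequence \eqref{mms2002.94}, which for $\cA$-twisted semiclassical smoothing operators is still \emph{untwisted} in the symbol (just as in Lemma~\ref{mms2002.80}, because the relevant line bundle is canonically trivial on the fibre diagonal), so the twisting enters only as a locally trivial coefficient and the whole argument is fibrewise. Concretely, I would first use surjectivity of $\sigma_{\scl}$ in \eqref{mms2002.94} to pick some $A_0\in\Psi^{-\infty}_{\scl}(Y/X;\cA\otimes\bbC^N)$ with $\sigma_{\scl}(A_0)=a$. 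Since $a$ vanishes at fibre-infinity, $\Pi_\infty$ is itself idempotent, and from $(\Pi_\infty+a)^2=\Pi_\infty+a$ the element $R_0=(\Pi_\infty+A_0)^2-(\Pi_\infty+A_0)$ lies in $\ker\sigma_{\scl}=\epsilon\Psi^{-\infty}_{\scl}(Y/X;\cA\otimes\bbC^N)$. The key quantitative point is that any element of $\epsilon\Psi^{-\infty}_{\scl}$, restricted to a fixed $\epsilon$, is a family of fibrewise operators of $L^2$-operator norm $O(\epsilon)$, uniformly over $X$: writing its Schwartz kernel near the diagonal in the form of \eqref{mms2002.97} with the extra factor of $\epsilon$ and substituting $z'=z-\epsilon W$ exhibits it as a fibrewise convolution against a kernel of $L^1_W$-size $O(\epsilon)$. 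Hence $\|R_0|_\epsilon\|<1/4$ for all $\epsilon$ in some interval $(0,\epsilon_*)$.

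To upgrade this to the whole interval I would reparametrize cleanly: choose a smooth $\zeta\colon[0,1)\to[0,\epsilon_*)$ with $\zeta(\epsilon)=\epsilon$ near $\epsilon=0$, with $\zeta/\epsilon$ smooth and strictly positive on $[0,1)$, and with $\zeta$ eventually constant. Replacing $\epsilon$ by $\zeta(\epsilon)$ inside the kernel \eqref{mms2002.97} carries $A_0$ to an $A_0'\in\Psi^{-\infty}_{\scl}(Y/X;\cA\otimes\bbC^N)$ with $A_0'|_\epsilon=A_0|_{\zeta(\epsilon)}$ as operators, still satisfying $\sigma_{\scl}(A_0')=a$ since $\zeta(0)=0$ and $\zeta'(0)=1$. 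Then $Q_\epsilon:=\Pi_\infty+A_0'|_\epsilon$ obeys $\|Q_\epsilon^2-Q_\epsilon\|\le 1/4-\delta$ for a uniform $\delta>0$ and all $\epsilon\in(0,1)$, so $\spec Q_\epsilon\subset\{|\lambda^2-\lambda|\le 1/4-\delta\}$, which is the disjoint union of a neighbourhood of $0$ and one of $1$ and misses the circle $|z-1|=\ha$ uniformly. Set
\begin{equation*}
\Pi_\infty+A\;:=\;\frac{1}{2\pi i}\oint_{|z-1|=\ha}(z-Q_\epsilon)^{-1}\,dz .
\end{equation*}
This is an exact idempotent for every $\epsilon\in(0,1)$; it lies in the unital extension of $\Psi^{-\infty}_{\scl}(Y/X;\cA\otimes\bbC^N)$ because holomorphic functional calculus preserves this algebra (the resolvent of $Q_\epsilon$ differs from that of the matrix $\Pi_\infty$ by a semiclassical smoothing family, the contour staying at uniform distance from $\spec Q_\epsilon$, and $\Psi^{-\infty}_{\scl}$ is an ideal stable under left multiplication by uniformly bounded operators); and $\sigma_{\scl}(A)=a$ because $(\Pi_\infty+A)-Q_\epsilon$ is of the $O(\epsilon)$ type described above, hence lies in $\epsilon\Psi^{-\infty}_{\scl}$. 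This gives \eqref{mms2002.102} with the asserted exact idempotency.

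With $A$ in hand, put $\ind^0_{\scl}[a]:=[\Pi_\infty+A]\big|_{\epsilon=\ha}\ominus[\Pi_\infty]\in\Kt^0_{\comp}(X;\cA)$. Independence of the choices of $A_0$, of $\zeta$, and of the representative of $[a]$ follows exactly as in the odd case of Proposition~\ref{mms2002.98}, from homotopy and stability invariance of the $\Kt$-theory of the algebra, since any two choices are joined by the same construction carried out over $X\times[0,1]$; this yields the map \eqref{mms2002.103}. The equality $\ind^0_{\scl}=\ind^0_{\text{a}}$ with the analytic index of \S\ref{sect:analytic ind} is then obtained by comparing the semiclassical and pseudodifferential quantizations exactly as in the untwisted treatment in \cite{fmtga}: the comparison is local on $X$ and fibrewise, so the local triviality of $\cA$, together with the fact that both sides depend on $[a]$ only up to homotopy and stabilization, reduces it to the standard semiclassical index identity.

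I expect the only genuine work to be the uniform-in-$\epsilon$ control just used: the clean reparametrization step (checking it preserves membership in $\Psi^{-\infty}_{\scl}$ and the semiclassical symbol) and the verification that the holomorphic functional calculus keeps one inside $\Psi^{-\infty}_{\scl}$ all the way down to $\epsilon=0$. Everything else is a routine transcription of the even Chern character / index-by-idempotents argument from the untwisted case.
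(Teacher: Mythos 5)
The serious gap is in your last paragraph, the equality $\ind^0_{\scl}=\ind^0_{\text{a}}$, which the paper regards as the major step. The two maps are defined on different presentations of $\Kt^0_{\comp}(T^*(Y/X))$ \mhy{} idempotent-valued symbols constant outside a compact set on one side, elliptic triples $(E_+,E_-,a)$ quantized by ordinary (twisted) pseudodifferential operators on the other \mhy{} and both take values in the global group $\Kt^0(X;\cA)$. An equality of K-theory-valued index homomorphisms cannot be checked ``locally on $X$'': local triviality of $\cA$ plus homotopy/stability invariance gives no patching principle for classes in $\Kt^0(X;\cA)$, so your proposed reduction to ``the standard semiclassical index identity'' is not an argument. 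What the paper actually does is introduce a combined model of compactly supported K-theory by triples $(E,F,a)$ of projection-valued matrices over the radial compactification of the fibre cotangent bundle with an intertwiner over the sphere bundle, then carry out a simultaneous analytic--semiclassical quantization: quantize $E,F$ to genuine semiclassical projections $P,Q$ (with $\epsilon$-independent symbols), quantize $a$ to a Toeplitz-type operator $A=Q(\ha)A'P(\ha)$ with a relative parametrix, define one index from this combined data, and observe that it specializes to $\ind^0_{\text{a}}$ and to $\ind^0_{\scl}$ on the two sub-models \mhy{} the twisting being harmless because all the symbolic constructions are untwisted. Some genuinely global comparison of this kind is needed; citing \cite{fmtga} for the untwisted machinery is fine, but you have not identified the mechanism that makes the two maps agree.

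There is also a soft spot in your construction of the idempotent quantization. Starting from an error $R_0$ with only one factor of $\epsilon$, the Riesz-projection step requires knowing that $(z-Q_\epsilon)^{-1}$ is, modulo constants, again a semiclassical family; your justification that $\Psi^{-\infty}_{\scl}$ is ``stable under left multiplication by uniformly bounded operators'' is not correct for this purpose: composing a semiclassical smoothing family with a merely norm-bounded family preserves neither smoothness of the kernel in the second variable nor, more to the point, the conormal structure \eqref{mms2002.97} at $\epsilon=0$. Consequently neither membership of $A$ in the algebra nor membership of $(\Pi_\infty+A)-Q_\epsilon$ in $\epsilon\Psi^{-\infty}_{\scl}$ (which you need to conclude $\sigma_{\scl}(A)=a$; operator-norm smallness $O(\epsilon)$ is not membership in the ideal) is established. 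The paper's route avoids this: first use the exact multiplicative sequence \eqref{mms2002.94} iteratively to correct the quantization to a formal idempotent, with error in the residual space $\dCI([0,1);\Psi^{-\infty}_{\comp})$ of \eqref{mms2002.96}, and only then apply the contour integral, whose correction term stays in that residual space; alternatively one must prove a semiclassical resolvent/spectral-invariance statement, which amounts to the same symbolic iteration. Your reparametrization in $\epsilon$ matches the paper's ``stretching the parameter'', and the definition of the map together with its homotopy and stability invariance is as in Proposition~\ref{mms2002.98}; those parts are fine.
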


\begin{proof} Certainly a quantization of $a$ exists by the surjectivity of
  the symbol map. Moreover the idempotent $\Pi_{\infty}+a$ can be extended to a
  `formal' idempotent, meaning that, using the symbol calculus, the quantization
  can be arranged to be idempotent up to infinite order error at $\epsilon
  =0.$ The error terms of order $-\infty$ in the semiclassical smoothing
  algebra are simply smoothing operators vanishing to infinite order with
  $\epsilon.$ Use of the functional calculus then allows one to perturb the
  quantization by such a term to give a true idempotent for small $\epsilon
  >0.$ Then stretching the parameter arranges this for $\epsilon \in(0,1).$
  The pair of this projection, for any $\epsilon >0,$ and the limiting
  constant projection, $\Pi_{\infty}$ defines a K-class. The existence of
  the map \eqref{mms2002.103} then follows in view of the homotopy
  invariance and stability of this construction.

To see the equality with the analytic index as previously defined is the
major step in the proof of the index theorem. This amounts to a construction
giving both this semiclassical index map and the usual analytic index map
at the same time. The two index maps, semiclassical and analytic are based
on two different models for the compactly supported K-theory of $T^*(Y/X)$
-- or more generally of a vector bundle $W.$ The first reduces to the set
of projection-valued smooth maps $W\longrightarrow M(N,\bbC)$ into
matrices which are constant outside a compact set. The second is defined in
terms of triples, consisting of a pair of vector bundles over the base
together with an isomorphism between their lifts to $S^*W.$

These two models can be combined into a larger one, in which the set of
objects are triples $(E,F,a)$ where $E$ and $F$ are vector bundles over
$\com{W},$ the radial compactification of $W,$ given directly as smooth
projection-valued matrices into some $\bbC^N$ and where $a$ intertwines
these two smooth families of projections over $S^*W,$ the boundary of the
radial compactification. The equivalence relation
$(E_1,F_1,a_1)\simeq(E_2,F_2,a_2)$ is generated by isomorphisms, meaning
smooth intertwinings of $E_1,$ and $E_2$ and of $F_1$ and
$F_2$ over $\com{W}$ which also intertwine the isomorphisms over $S^*W,$ the
boundary, plus stability. This again gives $\Kt_{\comp}(W).$

Standard arguments show that any such class in this general sense is
equivalent to an `analytic class' in which the bundles are lifted from the
base, or a `semiclassical class' in which the projections are constant
outside a compact set and the isomorphism between them is the identity --
in fact the second projection can be taken to be globally
constant. Moreover equivalence is preserved under these reductions.

Using these more general triples a combined analytic-semiclassical
quantization procedure may be defined by first taking semiclassical
quantizations of the projections $E,$ $F$ to actual semiclassical families
$P,$ $Q$ which are projections; the classical symbols of these projections
can be chosen to be independent of $\epsilon .$ This is again the standard
argument for quantizations of idempotents which is outlined above. Then the
isomorphism $a$ can be quantized to a pseudodifferential operator $A$ in the
ordinary sense but this can be chosen to satisfy $AP(\ha)=A=Q(\ha)A$ so it
`acts between' the images of $P(\ha)$ and $Q(\ha).$ This is accomplished by
choosing some $A'$ with symbol $a$ and replacing it by the
`Toeplitz operator' $A=Q(\ha)A'P(\ha)$ which necessarily has the same symbol.

Then $A$ is relatively elliptic, in the sense that it has a parametrix $B$
satisfying $BQ(\ha)=B=P(\ha)B$ and with $AB-Q(\ha)$ and $BA-P(\ha)$
smoothing operators. The analytic-semiclassical index can now be defined
using the using the same formula as the analytic index above. That it is
well-defined involves the standard homotopy and stability arguments. 

Finally then this map clearly reduces to the analytic and semiclassical
index maps on the corresponding subsets of data and hence these two maps
must be equal. The introduction of the Dixmier-Douady twisting makes
essentially no difference to these constructions so the equality in
\eqref{mms2002.103} follows.
\end{proof}

\begin{proposition}\label{mms2002.104} The appropriate form of Bott
periodicity can be proved directly giving commutative diagrams
\begin{equation}
\xymatrix{
\Kt_{\comp}^1(T^*(Y/X))\ar[r]\ar[d]_{\ind_{\scl}^1}
&\Kt_{\comp}^0(T^*(Y\times\bbR)/(X\times\bbR))
\ar[d]^{\ind_{\scl}^0}\\
\Kt^1(X;\cA)\ar[r]&\Kt^0(X\times \bbR;\cA)
}
\label{mms2002.105}\end{equation}
where the horizontal maps are the clutching construction and
\begin{equation}
\xymatrix{
\Kt_{\comp}^0(T^*(Y/X))\ar[r]\ar[d]_{\ind_{\scl}1^0}
&\Kt_{\comp}^1(T^*(Y\times\bbR)/(X\times\bbR))
\ar[d]^{\ind_{\scl}^1}\\
\Kt^0(X;\cA)\ar[r]&\Kt^1(X\times \bbR;\cA)
}
\label{mms2002.106}\end{equation}
where the inverses of the horizontal isomorphism are the Toeplitz index maps.
\end{proposition}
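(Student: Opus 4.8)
The plan is to realize both horizontal maps at the level of operators rather than merely of $K$-classes, and then to check that the two vertical maps are assembled from operations --- taking the semiclassical symbol, and restricting to $\epsilon=\ha$ --- that commute with these realizations. The squares then commute already before any passage to homotopy classes, which is exactly the assertion.

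For \eqref{mms2002.105} I would begin with a class in $\Kt^1_{\comp}(T^*(Y/X))$, represent it as in the proof of Proposition~\ref{mms2002.98} by $\Id+a$ with $a\in\CIc(T^*(Y/X);\bbC^N)$ and $(\Id+a)^{-1}=\Id+b$, and choose a semiclassical quantization $\Id+A\in\Psi^{-\infty}_{\scl}(Y/X;\cA\otimes\bbC^N)^\dagger$ that is invertible for every $\epsilon\in(0,1)$ with $\sigma_{\scl}(A)=a$; by construction $\ind^1_{\scl}$ of the class is then $[(\Id+A)\big|_{\epsilon=\ha}]$. I would then apply the standard operator-level realization of the suspension (clutching) isomorphism, $g\mapsto\mathrm{cl}(g)$, which sends an invertible perturbation of the identity to an idempotent with constant unital part $e_0\in M_{2N}(\bbC)$ and is natural in the algebra in which $g$ lives. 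Applied to $g=\Id+A$, and since the new $\bbR$-factor enters only as an extra \emph{base} coordinate for the fibration $Y\times\bbR\to X\times\bbR$ --- never through the $\epsilon$-rescaled fibre variables of \eqref{mms2002.97} --- this produces $P=\mathrm{cl}(\Id+A)\in\Psi^{-\infty}_{\scl}\bigl((Y\times\bbR)/(X\times\bbR);\cA\otimes\bbC^{2N}\bigr)^\dagger$, an idempotent for every $\epsilon\in(0,1)$ with $\Pi_\infty=e_0$ in the sense of Proposition~\ref{mms2002.101}. Naturality of $\mathrm{cl}$ with respect to the homomorphisms $\sigma_{\scl}$ and ``restrict to $\epsilon=\ha$'' gives $\sigma_{\scl}(P)=\mathrm{cl}(\Id+a)$, which represents the image of the source class under the top horizontal map, and $P\big|_{\epsilon=\ha}=\mathrm{cl}\bigl((\Id+A)\big|_{\epsilon=\ha}\bigr)$. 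Feeding $P$ into the definition of $\ind^0_{\scl}$ from Proposition~\ref{mms2002.101} then yields
\[
\ind^0_{\scl}\bigl(\mathrm{cl}([\Id+a])\bigr)=\bigl[P\big|_{\epsilon=\ha}\bigr]
=\mathrm{cl}\bigl(\ind^1_{\scl}([\Id+a])\bigr),
\]
which is the commutativity of \eqref{mms2002.105}.

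For \eqref{mms2002.106} I would run the identical argument with the two parities interchanged: represent an even class by $\Pi_\infty+a$ with values in idempotents and constant outside a compact set, quantize it by Proposition~\ref{mms2002.101} to a semiclassical idempotent $\Pi_\infty+A$ that is idempotent for all $\epsilon\in(0,1)$, and apply to the formal difference $[\Pi_\infty+A]-[\Pi_\infty]$ the standard operator-level realization of the inverse Toeplitz (Wiener--Hopf) isomorphism, built from the idempotent, the constant part $\Pi_\infty$, and a fixed winding-number-one unitary-valued function on the new $\bbR$-factor; this outputs an invertible perturbation of the identity by a compactly supported semiclassical operator over $(Y\times\bbR)/(X\times\bbR)$. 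As before its semiclassical symbol is the corresponding Toeplitz datum of $a$ --- the image of the source class under the top horizontal map --- and its restriction to $\epsilon=\ha$ is the Toeplitz datum of $(\Pi_\infty+A)\big|_{\epsilon=\ha}$; reading off $\ind^1_{\scl}$ from Proposition~\ref{mms2002.98} gives the claimed commutativity.

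The one point deserving attention --- and what I expect to be the only real, though entirely routine, obstacle --- is the assertion, used in both cases, that these operator-level clutching and Toeplitz constructions carry $\Psi^{-\infty}_{\scl}(Y/X;\cA)$ into $\Psi^{-\infty}_{\scl}\bigl((Y\times\bbR)/(X\times\bbR);\cA\bigr)$ while preserving invertibility, resp.\ idempotency, for $\epsilon\in(0,1)$. This is immediate from the kernel description \eqref{mms2002.97}: the added variable is a new base variable, so a smooth $\epsilon$-family for $Y/X$ combined (by the relevant natural formulas) with fixed, suitably decaying data in that variable is again a smooth $\epsilon$-family of the form \eqref{mms2002.97} for the enlarged fibration, and invertibility/idempotency is inherited fibrewise over $\bbR$. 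There is no analytic difficulty, since the semiclassical parameter $\epsilon$ and the Bott parameter never interact. Alternatively, one could deduce \eqref{mms2002.105}--\eqref{mms2002.106} from the equality of $\ind^0_{\scl}$ with the analytic index established in Proposition~\ref{mms2002.101}, together with the standard multiplicativity of the analytic index under exterior product with the Bott generator on $\bbR^2$; but I would prefer the direct argument above, which stays inside the semiclassical smoothing algebra and never invokes multiplicativity.
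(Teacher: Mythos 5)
Your argument is correct and is precisely the ``direct'' proof the paper alludes to but does not write out: you realize the clutching map and the (inverse) Toeplitz map by exact operator-level formulas inside the semiclassical smoothing algebras, and since both $\sigma_{\scl}$ and restriction to $\epsilon=\ha$ are (multiplicative) algebra homomorphisms, the squares \eqref{mms2002.105} and \eqref{mms2002.106} commute already at the level of representatives, with well-definedness of the vertical maps supplied by Propositions~\ref{mms2002.98} and~\ref{mms2002.101}. The only point requiring care --- that these constructions remain in the uniformly compactly supported semiclassical calculus for $(Y\times\bbR)/(X\times\bbR)$ and do not disturb the twisting, because the Bott variable enters only as a base variable through scalar coefficient functions and never through the $\epsilon$-rescaled fibre variables of \eqref{mms2002.97} --- is exactly the one you address.
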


\begin{corollary}\label{mms2002.107} To prove the equality of the analytic
  and topological index maps it suffices to prove the equality of the odd
  semiclassical and odd topological index maps.
\end{corollary}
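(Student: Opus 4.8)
The plan is to deduce the asserted equality purely from results already established in this section, so that the argument reduces to a diagram chase through Bott periodicity. First I would invoke Proposition~\ref{mms2002.101}, which identifies the even semiclassical index map with the even analytic index map, $\ind^0_{\scl}=\ind^0_{\text a}$ on $\Kc^0(T^*(Y/X))$; consequently it suffices to prove $\ind^0_{\scl}=\ind^0_t$.

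To pass from this even-degree statement to the odd-degree one I would transgress through a factor of $\bbR$. Given $u\in\Kc^0(T^*(Y/X))$, the clutching isomorphism appearing in \eqref{mms2002.106} sends it to a class $\tilde u\in\Kc^1\big(T^*(Y\times\bbR)/(X\times\bbR)\big)$, and commutativity of \eqref{mms2002.106} from Proposition~\ref{mms2002.104}, applied to the fibration $Y\times\bbR\longrightarrow X\times\bbR$, identifies $\ind^1_{\scl}(\tilde u)$ with the image of $\ind^0_{\scl}(u)$ under the clutching isomorphism $\Kt^0(X;\cA)\overset{\simeq}\longrightarrow\Kt^1(X\times\bbR;\cA)$, whose inverse is the Toeplitz index map. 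The topological index satisfies the exact analogue of \eqref{mms2002.106}: it is assembled in \S\ref{sect:top ind} out of the Gysin map $Di_!$ and the Bott isomorphism $j_!$ of Lemma~\ref{Bott}, both of which are natural under taking the product of a fibration with $\bbR$, while the trivialization of $\phi^*\cA$ used throughout is unaffected by that product; hence $\ind^1_t(\tilde u)$ is likewise the image of $\ind^0_t(u)$ under the same clutching isomorphism. Assuming now that the odd semiclassical and odd topological index maps coincide, we get $\ind^1_{\scl}(\tilde u)=\ind^1_t(\tilde u)$, and applying the inverse clutching isomorphism yields $\ind^0_{\scl}(u)=\ind^0_t(u)$. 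Since $u$ was arbitrary this gives $\ind^0_{\scl}=\ind^0_t$, and together with Proposition~\ref{mms2002.101} we obtain $\ind^0_{\text a}=\ind^0_t$, which is the equality of the analytic and topological index maps.

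The one step that requires genuine attention is the compatibility quoted above: that the twisted topological index commutes with the clutching and Toeplitz isomorphisms of Bott periodicity in the same way Proposition~\ref{mms2002.104} asserts for the semiclassical index. This means first setting up the odd twisted topological index in parallel with the even one in \S\ref{sect:top ind}, and then verifying that the Gysin-map construction respects suspension — the twisted counterpart of a standard property — after which all remaining bookkeeping is formal. Of course the substantive content behind this corollary, the equality of the odd semiclassical and odd topological index maps, is precisely what is proved in \S\ref{sect:index thm}; that is where the real difficulty lies, and the corollary simply records that establishing the index theorem in odd degree suffices.
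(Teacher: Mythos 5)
Your argument is correct and follows essentially the same route as the paper: the paper's proof likewise observes that both the semiclassical and the topological index maps fit into the Bott periodicity diagrams of Proposition~\ref{mms2002.104}, so the odd equality transfers to the even one, with Proposition~\ref{mms2002.101} supplying the identification of the even semiclassical and analytic indices. You simply spell out the diagram chase through \eqref{mms2002.106} and flag explicitly the suspension-compatibility of the twisted Gysin/Bott construction, which the paper asserts without further comment.
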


\begin{proof} Suppose we have proved the equality of odd semiclassical and
  odd topological index maps 
\begin{equation}
\xymatrix@1{
**[l]\Kt^1_{\comp}(T^*(Y/X))\ar@<1ex>[r]^-{\ind^1_{\scl}}
\ar@<-1ex>[r]^-=_-{\ind^1_{\text{t}}}&\Kt^1_{\comp}(X;\cA).
}
\label{mms2002.108}\end{equation}
Both the topological and the semiclassical index maps give commutative
diagrams as in Proposition~\ref{mms2002.104}, so it follows that the more
standard, even, versions of these maps are also equal.
\end{proof}

\begin{lemma}\label{mms2002.110} For an iterated fibration of manifolds 
\begin{equation}
\xymatrix{Z'\ar@{-}[r]&M'\ar[d]^{\psi}\\
Z\ar@{-}[r]&M\ar[d]^{\phi}\\
&B
}
\label{mms2002.111}\end{equation}
the semiclassical index gives a commutative diagram
\begin{equation}\xymatrix{
\Kco(T^*(M'/Y))\ar[dd]^{\ind_{\scl}^1}\ar[dr]^{\ind_{\scl}^1}\\
&\Kco(T^*(M/Y))\ar[dl]^{\ind_{\scl}^1}\\
\Kco(Y)
}
\label{mms2002.112}\end{equation}
where the map on the top right is the semiclassical index map for the
fibration of $M'$ over $M$ pulled back to $T^*(M/Y).$
\end{lemma}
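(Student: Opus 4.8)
The plan is to reduce both composites to invertible semiclassical perturbations of the identity, as in Proposition~\ref{mms2002.98}, and to exhibit a single semiclassical object that computes both. First fix a connection for $\psi\colon M'\longrightarrow M$, i.e.\ a splitting of $0\to T(M'/M)\to T(M'/B)\to\psi^*T(M/B)\to0$. Dualising identifies $T^*(M'/B)$ with the relative cotangent bundle of the base-changed fibration $\Phi\colon M'\times_M T^*(M/B)\longrightarrow T^*(M/B)$, so $\Kco(T^*(M'/B))$ is precisely the domain of the slanted arrow of \eqref{mms2002.112} — the ``$M'/M$ index pulled back to $T^*(M/B)$'' — and, since $\psi$ carries no twisting, Morita invariance (the classifying property of the fibrewise smoothing group) identifies its codomain with $\Kco(T^*(M/B))$, as shown. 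The structural point is that, near the fibre diagonal of $M'/B$, the semiclassical rescaling by $\epsilon$ restricts along the two summands of the splitting to the $M'/M$- and the $M/B$-rescalings; correspondingly an $M'/B$-semiclassical smoothing family is the restriction to the diagonal $\epsilon_i=\epsilon_o$ of an element of the bi-semiclassical calculus $\Psi^{-\infty}_{\scl}\big(M/B;\Psi^{-\infty}_{\scl}(M'/M)\big)$ with independent parameters $(\epsilon_o,\epsilon_i)$, while at a fixed positive value of the parameter one has the tautological identification of Schwartz kernels $\Psi^{-\infty}(M'/B)=\Psi^{-\infty}\big(M/B;\Psi^{-\infty}(M'/M)\big)$.

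With this framework the computation is short. Given a class in $\Kco(T^*(M'/B))$, choose a bi-semiclassical $\Id+\mathbb A(\epsilon_o,\epsilon_i)$, invertible on (a neighbourhood of) the diagonal, whose joint semiclassical symbol represents the class. Its diagonal restriction $A=\mathbb A|_{\epsilon_o=\epsilon_i}$ is an invertible $M'/B$-semiclassical family with the same symbol, so $\ind^1_{\scl}$ for $M'/B$ sends the class to $[\Id+A|_{\epsilon=\ha}]=[\Id+\mathbb A(\ha,\ha)]\in\Kco(B)$. For the composite, a semiclassical Fourier transform in the $M/B$-fibre directions, at scale $\epsilon_o$, turns $\mathbb A(\epsilon_o,\cdot)$ into a $\Phi$-semiclassical family over $T^*(M/B)$ in the parameter $\epsilon_i$ with the same symbol; its restriction at $\epsilon_i=\ha$ represents the image of the class under the slanted arrow, and applying $\ind^1_{\scl}$ for $M/B$ to this representative — by inverting the same Fourier transform and restricting $\epsilon_o=\ha$ — recovers $\Id+\mathbb A(\ha,\ha)$, hence the same element of $\Kco(B)$. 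The various ``modify the semiclassical family to stay invertible for all parameter values'' steps leave every one of these classes unchanged, by homotopy and stability invariance of the index (Propositions~\ref{mms2002.98}, \ref{mms2002.101}); so the two composites agree on an arbitrary representative, and therefore as maps.

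The main obstacle, and the only substantial point, is making the bi-semiclassical calculus and its symbol maps precise — in particular checking that the ``inner'' limit $\epsilon_i\downarrow0$ and the ``outer'' limit $\epsilon_o\downarrow0$ are compatible uniformly down to the corner, so that the symbol produced over $T^*(M/B)$ by the $M'/M$ index is exactly the one re-quantised by the $M/B$ index, and that the fibrewise semiclassical Fourier transform behaves as claimed. This is the iterated-semiclassical bookkeeping developed in \cite{fmtga}; once it is imported, together with the routine connection-splitting and Morita identifications of the domain and codomain of the slanted arrow, commutativity of \eqref{mms2002.112} is essentially forced, since on Schwartz kernels each of these index maps is simply ``restrict the semiclassical parameter to $\ha$''.
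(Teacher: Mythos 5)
Your argument is correct and is precisely the paper's own proof: the paper disposes of this lemma in one sentence, by invoking a double semiclassical quantization with independent parameters in the two fibres and deferring the iterated-calculus bookkeeping to \cite{fmtga}, which is exactly your bi-semiclassical family $\mathbb A(\epsilon_o,\epsilon_i)$ with diagonal restriction and restriction at $\epsilon=\ha$. You have simply written out the details that the paper leaves to the reference.
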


\begin{proof} The commutativity of \eqref{mms2002.112} follows from the
use of a double semiclassical quantization, with different parameters in
the two fibres (see the extensive discussion in \cite{fmtga}).
\end{proof}

\begin{lemma}\label{mms2002.113} For any complex, or real-symplectic,
vector bundle $W$ over a manifold $X$ the semiclassical index implements
the Thom isomorphism 
\begin{equation}\xymatrix{
\Kco(W)\ar@<1ex>[r]^{\ind_{\scl}^1}_{=}\ar@<-1ex>[r]_\Thom&\Kco(X).
}
\label{mms2002.114}\end{equation}
\end{lemma}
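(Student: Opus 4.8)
The plan is to reduce the lemma to the classical Bott periodicity computation, using the two results already established: the equality of the even semiclassical and even analytic index maps (Proposition~\ref{mms2002.101}) and the compatibility of the semiclassical index with Bott periodicity (Proposition~\ref{mms2002.104}). First I would trade the odd statement for the even one. Both $\Thom$ and $\ind^1_{\scl}$ intertwine the clutching/suspension isomorphisms passing from $X$ to $X\times\RR$ and from $W$ to $W\times\RR$ (carrying the complex structure along): for $\ind^1_{\scl}$ this is exactly the diagram of Proposition~\ref{mms2002.104}, and for the topological Thom class it is the standard behaviour of the Thom class under external product with the Bott generator on $\RR$. Hence it is enough to show that $\ind^0_{\scl}$ and $\Thom$ agree as homomorphisms $\Kce(W)\longrightarrow\Kt^0(X)$.

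For the even case I would exploit that $\ind^0_{\scl}$ is a homomorphism of $\Kt^0(X)$-modules: tensoring a semiclassical quantization by the pullback of a bundle $E$ on $X$ produces the quantization of the correspondingly twisted symbol, so the resulting idempotent, and hence the index class, is simply tensored by $E$; this is the usual stability argument, unaffected by the semiclassical parameter. The inverse of the Thom isomorphism is the $\Kt^0(X)$-module map $\eta\longmapsto p^{*}\eta\cdot\lambda_W$, where $p\colon W\to X$ is the bundle projection and $\lambda_W\in\Kce(W)$ the Thom class, and these classes generate $\Kce(W)$ as a $\Kt^0(X)$-module. Therefore it suffices to compute $\ind^0_{\scl}$ on the single class $\lambda_W$: once $\ind^0_{\scl}(\lambda_W)=1\in\Kt^0(X)$ is known, the module property gives $\ind^0_{\scl}(p^{*}\eta\cdot\lambda_W)=\eta\cdot\ind^0_{\scl}(\lambda_W)=\eta$, which is precisely the assertion that $\ind^0_{\scl}$ inverts $\Thom$ on generators, i.e.\ $\ind^0_{\scl}=\Thom$.

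The remaining ingredient is the identity $\ind^0_{\scl}(\lambda_W)=1$. By Proposition~\ref{mms2002.101} the semiclassical index agrees with the analytic index on $\Kce(W)$, so this is the classical statement that the analytic index of the Bott element of a complex vector bundle is the trivial line: represent $\lambda_W$ by the fibrewise symbol of Clifford multiplication on $\Lambda^{\bullet}$ of the fibre (the Atiyah--Bott--Shapiro construction), quantize it fibrewise after choosing a fibre metric to obtain the standard bundle of harmonic-oscillator-type operators, and note that this family has one-dimensional kernel, spanned by the Gaussian ground state, and vanishing cokernel, so its index is the canonically trivial line. The same conclusion is visible on the semiclassical side, where quantizing the linear Bott symbol gives the standard semiclassical family of harmonic oscillators whose null space is the rank-one ground-state projection, constant over $X$. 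As an alternative to the module argument one may reduce directly to a trivial bundle $W=X\times\CC^{n}$: by naturality of both maps under pullback it is enough to treat $W$ lifted to a flag bundle of $W$, a fibration with compact fibres over which $W$ splits into line bundles, whereupon the iterated-fibration multiplicativity of the semiclassical index (Lemma~\ref{mms2002.110}, via a double quantization) together with the multiplicativity of the Thom class reduces everything to a one-dimensional fibre, settled by the harmonic-oscillator computation.

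The hard part will not be any single deep step but the bookkeeping that legitimises these reductions: establishing the $\Kt^0(X)$-module property of $\ind^0_{\scl}$ in the presence of the non-compact fibres of $W\to X$ and the associated compact-support conditions, checking that the Atiyah--Bott--Shapiro model for $\lambda_W$ lies in the projection model of $\Kce(W)$ used in Proposition~\ref{mms2002.101} (constant idempotent outside a compact set), and confirming that the explicit semiclassical quantization of the linear Bott symbol lies in the semiclassical smoothing algebra with the predicted ground-state index, rather than merely invoking this as a black box.
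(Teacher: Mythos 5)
Your argument is essentially correct, but it takes a different route from the paper. The paper's own proof is very short: it invokes the semiclassical quantization in the \emph{isotropic} (Weyl) algebra on a symplectic vector space, where the symbol map is shown in \cite{fmtga} to induce a K-theory isomorphism by Atiyah's argument, and then uses homotopy invariance of the resulting Thom map to pass from the model fibre to an arbitrary complex (or real-symplectic) bundle $W.$ You instead take the topological Thom isomorphism as known, reduce odd to even via the suspension diagrams of Proposition~\ref{mms2002.104}, and characterize $\ind^0_{\scl}$ by the $\Kt^0(X)$-module property together with the normalization $\ind^0_{\scl}(\lambda_W)=1,$ the latter computed from the harmonic-oscillator quantization of the Atiyah--Bott--Shapiro symbol. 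Both arguments ultimately rest on the same input (the oscillator ground state, i.e.\ Atiyah's rotation argument), but they buy different things: the paper's appeal to the isotropic calculus delivers the isomorphism statement in one stroke and, because Schwartz kernels are native to that calculus, sidesteps the support bookkeeping you correctly flag --- the algebra $\Psi^{-\infty}_{\comp,\scl}$ of Proposition~\ref{mms2002.93} has uniformly compactly supported kernels, whereas the oscillator ground-state projection is only Schwartz, so your explicit quantization of the linear Bott symbol must either be corrected by functional calculus inside the compactly supported algebra or carried out in the enlarged isotropic algebra before the class can be identified with the trivial line. Your module-plus-normalization scheme, including the invocation of Proposition~\ref{mms2002.101} over the non-compact fibres of $W\to X$ and the multiplicativity of Lemma~\ref{mms2002.110} in the alternative flag-bundle reduction, is sound once that enlargement (or correction) is made explicit; with it, your proof is a more self-contained, if longer, substitute for the paper's citation of \cite{fmtga}.
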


\begin{proof} This again follows from the use of semiclassical quantization
  in the `isotropic' of (pseudodifferential) Weyl algebra of operators on a
  symplectic vector space. The resulting symbol map is shown, in
  \cite{fmtga}, to be an isomorphism using the argument of Atiyah. Since
  the Thom map constructed this way is homotopy invariant it applies to to
  case of a complex vector bundle where the `positive' sympectic structure
  on the underlying real bundle is fixed up to homotopy.
\end{proof}

\section{The index theorem}\label{sect:index thm}

The odd topological index is defined as the composite map arising from an
embedding so we wish to prove the commutativity of the diagram
\begin{equation}
\xymatrix{
\Kco(T^*(Y/X))\ar[r]\ar[rrd]_{\ind_{\scl}}&
\Kco(T^*(\Omega/X);\cA)\ar[r]^{\iota^*}\ar[rd]^{\ind_{\scl}}&
\Kco(T^*(\bbR^M)\times X);\cA)\ar@/^{-.5pc}/[d]^{=}_{\ind_{\scl}}
\ar@<1.5ex>[d]^{\Thom}\\
&&\Kco(X;\cA).
}
\label{mms2002.109}\end{equation}
Here $\Omega$ is a collar neighbourhood of $Y$ embedded in $\bbR^M,$ so is
isomorphic to the normal bundle to $Y.$ Thus, it suffices to prove
commutativity in three places. The last of these is equality of the two
maps on the right, that the semiclassical index map implements the Thom
isomorphism (or in this trivial case, Bott periodicity). The second is
`excision' which is immediate from the definition of the semiclassical
index. The first commutativity, for the triangle on the left corresponds to
multiplicativity of the semiclassical index which in this case reduces to
(a special case of) Lemma~\ref{mms2002.110}. 
 
This leads to the main theorem. Here we tacitly identify the tangent and cotangent
bundles via a Riemannian metric.

\begin{theorem}[The index theorem in $K $-theory]\label{K-indexthm} Let
  $\phi:Y\longrightarrow X$ be a fiber bundle of compact manifolds,
  together with the other data in \eqref{mms2002.115} --
  \eqref{mms2002.87}. Let $\cA$ be the smooth Azumaya bundle over $X$ as
  defined in \S\ref{SmoothAzbun} and $P\in\Psi^\bullet (Y/X,\cA\otimes\bbE)$ be a
  projective family of elliptic pseudodifferential operators acting on the
  projective Hilbert bundle $\bbP(\phi_*(\bbE \otimes K_\tau))$ over $X,$
  with symbol $p\in\Kc(T(Y/X)),$ then
\begin{equation}
\ind_a(P) = \ind_t(p) \in K^0(X, \cA).
\end{equation}
\end{theorem}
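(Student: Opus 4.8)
The plan is to route the desired equality through the semiclassical index map of \S\ref{scl} and then to verify, one piece at a time, the commutativity of the diagram \eqref{mms2002.109} that exhibits the topological index as the composite of an embedding, an excision and an inverse Bott/Thom map. Concretely, I would write $\ind_a(P)=\ind_t(p)$ as the concatenation of three identifications: $\ind_a(P)=\ind_a(p)$ (the analytic index of an operator depends only on its symbol class), $\ind_a(p)=\ind^0_{\scl}(p)$ (Proposition~\ref{mms2002.101}), and $\ind^0_{\scl}(p)=\ind_t(p)$.

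The first identification is the content of the well-definedness of the analytic index map \eqref{mms2002.135} on $\Kc^0(T^*(Y/X);\rho^*\phi^*\cA)$ — its invariance under homotopy, bundle isomorphism and stabilization — so that $\ind_a(P)$ factors through the symbol class $p=\sigma(P)\in\Kc^0(T(Y/X))$; the source here is untwisted precisely because $\phi^*\cA$ is trivialized by $\gamma$ (equivalently by $\kappa$), hence so is $\rho^*\phi^*\cA$ over $T^*(Y/X)$. The second identification is Proposition~\ref{mms2002.101} itself, $\ind^0_{\scl}=\ind^0_{\text{a}}$. By Corollary~\ref{mms2002.107} — whose hypothesis is the compatibility of both index maps with Bott periodicity, supplied for the semiclassical side by Proposition~\ref{mms2002.104} and for the topological side by the construction of the Gysin maps — the third identification reduces to proving $\ind^1_{\scl}=\ind^1_{\text{t}}$ in odd degree.

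For that I would follow \eqref{mms2002.109}: fix an embedding $i:Y\hookrightarrow\bbR^M\times X$ over $X$ and a fibred collar neighbourhood $\Omega$ of its image, identified with the normal bundle of $Y$, and note that the trivialization of $\phi^*\cA$ extends over $\Omega$ and then over $\bbR^M\times X$ as the pullback from $X$. The topological index is then the composite of the Gysin push-forward along $T^*(Y/X)\hookrightarrow T^*(\Omega/X)$, the excision identification with $T^*(\bbR^M)\times X$, and the inverse Bott/Thom isomorphism landing in $\Kt^\bullet(X;\cA)$. Three commutativities remain. The excision square commutes with $\ind_{\scl}$ essentially by construction, since semiclassical smoothing kernels are supported near the fibre diagonal and extend by zero. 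The Bott/Thom square is Lemma~\ref{mms2002.113}: the semiclassical index of the isotropic (Weyl) calculus implements the Thom isomorphism, after \cite{fmtga}, with the twisting carried along unchanged because $\cA$ is pulled back from $X$ throughout. The remaining triangle is multiplicativity of $\ind_{\scl}$ for the iterated fibration $\Omega\to Y\to X$ with $\Omega\to Y$ a vector bundle, which is exactly Lemma~\ref{mms2002.110}, proved by a double semiclassical quantization using independent parameters in the two sets of fibres.

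The decisive step — and the whole point of passing through the semiclassical picture — is this last triangle: functoriality of the index under the composite fibration, which in the classical pseudodifferential formulation is the notoriously delicate comparison of the index of a family of families with that of the composite family. The twisted setting adds a genuine subtlety: the intermediate fibration $\bbR^M\times X\to X$ does \emph{not} satisfy $\phi^*\beta=0$, so the smooth Azumaya bundle of \S\ref{SmoothAzbun} is simply unavailable over it. The argument survives because the semiclassical index maps of Propositions~\ref{mms2002.98} and \ref{mms2002.101} are defined for an \emph{arbitrary} fibration, using only the pulled-back Azumaya bundle, so Lemma~\ref{mms2002.110} applies without change. Granting the double-parameter quantization, what is left is the usual homotopy-and-stability bookkeeping, together with tracking the trivialization $\gamma$ (equivalently $\kappa$) through the diagram: it feeds both into the identification \eqref{mms2002.55} of twisted with untwisted K-theory and into the Azumaya bundle itself, and these two dependences cancel, as already recorded in \eqref{mms2002.204}.
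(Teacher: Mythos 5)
Your proposal is correct and follows essentially the same route as the paper: it passes to the semiclassical index (Proposition~\ref{mms2002.101}), reduces to the odd case via Corollary~\ref{mms2002.107}, and proves the three commutativities in \eqref{mms2002.109} by multiplicativity (Lemma~\ref{mms2002.110}), excision, and the Thom/Bott step (Lemma~\ref{mms2002.113}), including the key observation that the semiclassical index is defined for the product fibration where $\phi^*\beta=0$ fails.
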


\section{The Chern character of the index}\label{sect:coh}

As discussed above, the index map in K-theory can be considered as acting
on the untwisted K-theory, with compact supports, of $T^*(Y/X),$ via the
identification with the (trivially) twisted K-theory coming from the
original choice of data \eqref{mms2002.115} -- \eqref{mms2002.87}. The
Chern character for the symbol class in the standard setting, 
\begin{equation}
\Kc^0(T^*(Y/X))\longrightarrow\operatorname{H}_{\text{c}}^{\even}(T^*(Y/X))
\label{mms2002.126}\end{equation}
can be represented explicitly in terms of symbol data and
connections in a relative version of the formul\ae\ in \S\ref{TwistedChern}
following Fedosov \cite{Fedosov}. A K-class is represented by bundles $(E_+,E_-)$
over $Y$ and an elliptic symbol $a$ identifying them over $S^*(Y/X).$ It is
convenient to use the relative interpretation of the cohomology from
\cite{Albin-Melrose}. Thus one can take the explicit representative 
\begin{equation}
\begin{gathered}
\Ch([(E_+,E_-,a)]=(\widetilde\Ch(a),\Ch(E_+)-\Ch(E_-)),\\
\widetilde\Ch(a)=-\frac1{2\pi i}
	\int_0^1 \tr\left(a^{-1}(\nabla a)e^{w(t)}\right)dt\Mwhere\\
w(t)=(1-t)F_++ta^{-1}F _-a + \frac 1{2\pi i} t(1-t)(a^{-1}\nabla a)^2\Mand\\
\Ch(E_{\pm})=\tr\exp(F_{\pm}/2\pi i),\quad F_{\pm}=\nabla_{\pm}^2.
\end{gathered}
\label{mms2002.127}\end{equation}
Here $\nabla_{\pm}$ are connections on $E_{\pm}$ over $Y$ and $\nabla$ is the
induced connection on $\hom(E_+,E_-)$ lifted to $S^*(Y/X).$ Note that the
underlying relative complex is the direct sum of the deRham complexes with
differential
\begin{equation}
\CI(S^*(Y/X);\Lambda ^*)\oplus\CI(Y;\Lambda ^*),\quad
D=\begin{pmatrix}
d&\pi^*
\\
0&-d
\end{pmatrix}.
\label{mms2002.128}\end{equation}

In our twisted case, as shown in \S\ref{TwistedChern} the line bundle $J$
over $Y^{[2]}$ decomposes
as $\tilde J\boxtimes \tilde J'$ when lifted to $\tilde Y^{[2]}$ which has
an additional fiber factor of $\tilde L.$  The discussion of the Chern
character therefore carries over directly to this relative setting.

\begin{proposition}\label{mms2002.131} For any element of $\Kc^0(T^*(Y/X))$
  represented by (untwisted) data $(E_+,E_-,a)$ the twisted Chern
  character of the image in $\Kc(T^*(Y/X);\rho^*\phi^*\cA)$ is represented by the
  pair of forms after lifting to $\tilde L$ and trivializing $J$ as in
  \eqref{mms2002.188}
\begin{equation}
\Ch_{ \rho^*\phi^*\cA}([(E_+,E_-,a)])=(\widetilde\Ch_{\cA}(a),\Ch_{\cA}(E_+)-\Ch_{\cA}(E_-))
\label{mms2002.132}\end{equation}
in the subcomplex of the relative deRham complex fixed by \eqref{mms2002.147},
and  $\rho\colon T^*(Y/X)\longrightarrow Y$ is the projection.
\end{proposition}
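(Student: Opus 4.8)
The plan is to reduce the computation to the untwisted relative Chern character formula \eqref{mms2002.127}, applied over $\tilde L$ rather than over $X$, and then to check that the resulting forms lie in the subcomplex cut out by \eqref{mms2002.147}. Recall from \eqref{mms2002.188} that, pulled back to $\tilde Y^{[2]}=\tilde L\times_XY^{[2]}$, the primitive line bundle $J$ splits as $\tilde J\boxtimes\tilde J'$, so that the Azumaya bundle acting on the fibres of $Y$ over $X$, once lifted to $\tilde Y=\tilde L\times_XY$, is realized inside $\Psi^{-\infty}(\tilde Y/\tilde L;\tilde J)$ acting on the same fibres but now fibred over $\tilde L$. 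Hence the image of $(E_+,E_-,a)$ under the natural map $\Kc^0(T^*(Y/X))\longrightarrow\Kc^0(T^*(Y/X);\rho^*\phi^*\cA)$ is represented, after this lift, by the (untwisted) bundles $\tilde J\otimes E_\pm$ over $\tilde Y$ together with the elliptic symbol $a$ identifying them over $S^*(Y/X)$, which is exactly the data of the untwisted relative theory, the only new feature being the extra fibre factor $\tilde L$.

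First I would fix the geometric data: unitary connections $\nabla_\pm$ on $E_\pm$ over $Y$, a connection on the fibration $\phi$, and the connection on $\tilde J$ induced from the primitive connection $\nabla^J$ of Lemma~\ref{mms2002.64}. Pulling these back to $\tilde Y$ gives a combined connection whose curvature $\Omega$ has, over any open set where $u$ has a smooth logarithm $f$, the local form \eqref{mms2002.180}, namely a sum of two commuting terms, one a multiple of the infinitesimal generator of the $\UU(1)$-action on $\tilde L$ and the other equal to $\frac{1}{2\pi i}\overline{\alpha}\wedge\gamma$. Substituting $\Omega$ for the ordinary curvatures in \eqref{mms2002.127} produces the pair \eqref{mms2002.132}: the transgression form $\widetilde\Ch_{\cA}(a)$ on $\tilde L\times_XS^*(Y/X)$ built from the Fedosov integral, and the even forms $\Ch_{\cA}(E_\pm)$ on $\tilde L\times_XY$, the latter treated as in the idempotent discussion around \eqref{mms2002.189}. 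Exactly as in the proof of Proposition~\ref{mms2002.163}, inserting \eqref{mms2002.180} into \eqref{mms2002.127} and using that the two summands of $\Omega$ commute shows that each component of \eqref{mms2002.132} acquires the shape $\exp(\overline{\alpha}\wedge\gamma/2\pi i)\,v$ of \eqref{mms2002.181}, which satisfies \eqref{mms2002.147}; under a deck transformation of $\hat X$ every term is conjugated by $e^{in\theta}$, so the forms are globally well defined on $\tilde L$.

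It remains to see that the pair \eqref{mms2002.132} is closed for the twisted relative differential, hence defines a class in the cohomology of the subcomplex of the relative deRham complex of $(S^*(Y/X),Y)$ fixed by \eqref{mms2002.147}, which by the relative version of Proposition~\ref{mms2002.146} (combined, as in \cite{Albin-Melrose}, with the relative description of compactly supported cohomology) is the $\rho^*\phi^*\overline{\delta}$-twisted cohomology of $T^*(Y/X)$. Since the additional curvature contribution in $\Omega$ is pulled back from $X$, hence closed and central along the fibres of both $S^*(Y/X)\to X$ and $Y\to X$, the transgression identity $d\,\widetilde\Ch_{\cA}(a)=\pi^*(\Ch_{\cA}(E_+)-\Ch_{\cA}(E_-))$ goes through formally as in the untwisted relative complex \eqref{mms2002.127}--\eqref{mms2002.128}; the twisting only replaces the differential $D$ on the fixed subcomplex by $D+\overline{\delta}\wedge$, which is precisely what is required. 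Finally, as in Lemma~\ref{mms2002.159}, the construction is the pull-back of a universal relative form and restricts on the fibres over points of $\tilde L$ — where the curvature term vanishes — to the standard relative Chern character \eqref{mms2002.127}; by universality and closedness the representative is independent of the choices and represents the twisted Chern character of the image class. The one step needing genuine care is this compatibility of the transgression structure with the constraint \eqref{mms2002.147}; the rest is bookkeeping combining Proposition~\ref{mms2002.146}, Proposition~\ref{mms2002.163} and the Fedosov relative formula.
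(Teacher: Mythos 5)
Your proposal is correct and follows exactly the route the paper intends: the paper states Proposition~\ref{mms2002.131} without a separate proof, asserting only that the discussion of \S\ref{TwistedChern} (the splitting \eqref{mms2002.188}, the local computation of Proposition~\ref{mms2002.163} giving the form \eqref{mms2002.181}, the idempotent treatment of the even case around \eqref{mms2002.189}, and the relative Fedosov formula \eqref{mms2002.127}--\eqref{mms2002.128}) ``carries over directly'' to the relative setting. Your write-up simply fills in those same steps — the lift to $\tilde L$, the commuting decomposition of the curvature \eqref{mms2002.180}, deck-transformation invariance, the transgression identity in the subcomplex fixed by \eqref{mms2002.147}, and the identification with the twisted Chern character via universality as in Lemma~\ref{mms2002.159} — so it matches the paper's argument.
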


Of course the point of this discussion is that these forms do give the
analogue of the index formula in (twisted) cohomology.

\begin{theorem}\label{mms2002.134} For the twisted index map
  \eqref{mms2002.135} the twisted Chern character is given by the push-forward
  of the differential form in \eqref{mms2002.132} 
\begin{equation}
\begin{gathered}
\Ch_{\cA}\circ\ind:\Kc^0(T^*(Y/X);\rho^*\phi^*\cA)\simeq \Kc^0(T^*(Y/X))\longrightarrow
\operatorname{H}^{\even}(X, \delta),\\
\Ch_{\cA}\circ\ind(p)=
 (-1)^n \phi_*\rho_*\left\{\rho^*{\rm
Todd}(T^*(Y/X)\otimes \bbC)
\wedge
\Ch_ {\rho^*\phi^*\cA}(p)\right\},
\end{gathered}
\label{mms2002.136}\end{equation}
where ${\rm Todd}(T^*(Y/X)\otimes\bbC)$ denotes the 
Todd class of the complexified vertical cotangent bundle and $p=[(E_+,E_-,a)]$
as identified in Proposition \ref{mms2002.131}.
\end{theorem}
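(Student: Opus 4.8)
The plan is to reduce to the topological index and then transport the twisted Chern character through the geometric construction of $\ind_t$. By Theorem~\ref{K-indexthm} we have $\ind_a(P)=\ind_t(p)$ in $\Kt^0(X,\cA)$, so it is enough to establish \eqref{mms2002.136} with $\ind$ replaced by $\ind_t$. Recall from \S\ref{sect:top ind} that $\ind_t=j_!^{-1}\circ Di_!$, where, following the diagram \eqref{ind_t}, $Di_!$ is the composite of the Thom isomorphism $Di_!\colon\Kc^0(T^*(Y/X);\rho^*\phi^*\cA)\to\Kc^0(\cN,\cA_\cN)$ for the complex normal bundle $\cN$ of the fibrewise embedding $T^*(Y/X)\hookrightarrow X\times\bbR^{2N}$, the excision isomorphism $\Phi^*$ onto a tubular neighbourhood, the extension-by-zero map $(i_\cU)_!$ into $X\times\bbR^{2N}$, and the inverse Bott map $j_!^{-1}$. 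The proof consists in checking that $\Ch_\cA$---in the relative form of Proposition~\ref{mms2002.131} at the source and the even form of \S\ref{TwistedChern} at the target---intertwines each of these four arrows with the corresponding operation on twisted deRham cohomology, and then in assembling the resulting commutative squares.

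First I would handle the Thom/Gysin step, which is where the Todd class enters. The relative Chern--Weil representatives \eqref{mms2002.127}--\eqref{mms2002.132} express $\Ch_{\rho^*\phi^*\cA}(p)$, after lifting to $\tilde L$ and trivialising $J$ as in \eqref{mms2002.188}, as a pair of forms lying in the subcomplex cut out by the conditions \eqref{mms2002.147}. Because the twisting $\rho^*\phi^*\cA$ is pulled back from the base and $\cN$ is a complex bundle pulled back from $T^*(Y/X)$, the twisted Thom class of $\cN$ is represented by the ordinary relative Thom form up to the standard untwisted $\Td$-correction; the relative version of the commutative diagram \eqref{mms2002.176} then shows that $Di_!$ acts on $\Ch$ by the cohomological Thom push-forward composed with wedging by $\Td(\cN)^{-1}$. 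This is the Riemann--Roch identity for the embedding, and the only genuinely twisted input is that all forms remain in the subcomplex \eqref{mms2002.147}, which is automatic since the twisting data lives entirely on $X$.

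Next I would assemble. The excision maps $\Phi^*$ and $(i_\cU)_!$ are compatible with $\Ch_\cA$ because the Chern--Weil forms have the required supports, and $j_!^{-1}$ introduces no Todd factor since $\Td$ of a trivial bundle is $1$. The fibrewise embedding $i\colon Y\hookrightarrow X\times\bbR^N$ has normal bundle $N_Y$ with $T(Y/X)\oplus N_Y$ trivial, and $\cN\cong\rho^*(N_Y\otimes\bbC)$; multiplicativity of $\Td$ therefore gives $\Td(\cN)^{-1}=\rho^*\Td(T^*(Y/X)\otimes\bbC)$ after identifying tangent and cotangent by the metric. Commuting the cohomological Thom and Bott push-forwards with integration over the fibres of $\rho$ and then of $\phi$---legitimate on the subcomplex \eqref{mms2002.147}, where $\rho^*\phi^*\delta$-twisted forms push down to $\delta$-twisted forms---collapses the composite push-forward to $\phi_*\rho_*$, yielding \eqref{mms2002.136} up to sign. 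The sign $(-1)^n$, $n=\dim Z$, is the standard orientation factor relating the almost-complex orientation used in the Thom isomorphism to the symplectic orientation of $T^*(Y/X)$; it is tracked through \eqref{ind_t} exactly as in the untwisted Atiyah--Singer computation.

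The hard part will be making the twisted Riemann--Roch step rigorous in the \emph{relative}, compactly supported twisted deRham complex of \cite{Albin-Melrose}: one must check that the relative twisted Chern--Weil form \eqref{mms2002.132}, lifted to $\tilde L$ and with $J$ trivialised via \eqref{mms2002.188}, transforms under the twisted Thom map precisely by wedging with $\rho^*\Td(T^*(Y/X)\otimes\bbC)$ while never leaving the subcomplex defined by \eqref{mms2002.147}, and that the identification $\Kc^0(T^*(Y/X);\rho^*\phi^*\cA)\cong\Kc^0(T^*(Y/X))$ used throughout is compatible with all the cohomological push-forwards. Granting this naturality together with the sign bookkeeping, \eqref{mms2002.136} follows by concatenating the commutative squares attached to the arrows of \eqref{ind_t}.
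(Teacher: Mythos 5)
Your proposal is correct and follows essentially the same route as the paper: reduce to $\ind_t$ via Theorem~\ref{K-indexthm}, apply a twisted Riemann--Roch identity at the embedding step (producing the factor $\Td(\cN)^{-1}=\Td(T(Y/X)\otimes\bbC)$), handle the Bott/excision steps with no further Todd correction, and collapse the push-forwards to $(-1)^n\phi_*\rho_*$. The only difference is presentational: the paper deduces the twisted Riemann--Roch formula \eqref{RR} formally from functoriality and the $\Kt^0(X)$-module property of $\Ch_\cA$ (via the projection formula $i_!F=i_!1\otimes\pi^*F$), whereas you propose verifying the same compatibilities directly on the Chern--Weil representatives in the subcomplex \eqref{mms2002.147}, which amounts to the same argument.
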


\begin{proof} 
 By the Index Theorem in K-theory, Theorem \ref{K-indexthm} of the previous section, it suffices 
 to compute the twisted Chern character of the topological index of the projective family
of elliptic pseudodifferential operators. We
begin with by recalling the basic properties of the twisted Chern character.
As before, we assume that the primitive line bundle $J$ defining the smooth Azumaya bundle 
$\cA$ is endowed with a fixed connection respecting the primitive property.
It gives a homomorphism,
\begin{equation}\label{Chern-char}
\Ch_\cA :\Kt^0(X, \mathcal{A})\longrightarrow \operatorname{H}^{even}(X, \delta),
\end{equation}
satisfying the following properties.

\begin{enumerate}
\item The Chern character is \emph{functorial} under smooth maps
in the sense that if $f\colon W \longrightarrow X$ is a smooth map between
compact manifolds, then the following diagram commutes:
\begin{equation} \label{natural2}
\begin{CD}
\Kt^0(X, \mathcal{A}) @>f^!>> \Kt^0(W, f^*\mathcal{A}) \\
                @VV{\Ch_\cA}V
@VV{\Ch_{f^*\cA}}V&
\\
\operatorname{H}^{even}(X, \delta) @>f^*>>   \operatorname{H}^{even}(W, f^*\delta).
\end{CD}
\end{equation}
Here the pullback primitive line bundle ${f^{[2]}}^*J$ defining the pullback 
smooth Azumaya bundle 
$f^*\cA$ is endowed with the pullback of the fixed connection respecting the primitive property.

\item The Chern character respects the structure of $\Kt^0(X,
\mathcal{A})$ as a module over $\Kt^0(X),$ in the sense that the following
diagram commutes:
\begin{equation} \label{module}
\begin{CD}
\Kt^0(X) \times\Kt^0(X, \mathcal{A}) @>>> \Kt^0(X, \mathcal{A}) \\
                @VV{\Ch \times \Ch_\cA}V
@VV{\Ch_{\cA}}V&
\\
\operatorname{H}^{even}(X, \bbQ) \times \operatorname{H}^{even}(X, \delta	) @>>>   \operatorname{H}^{even}(X, \delta	)
\end{CD}
\end{equation}
where the top horizontal arrow is the action of $\Kt^0(X)$ on $\Kt^0(X,\mathcal{A})$
given by tensor product and the bottom horizontal arrow is given by
the cup product.
\end{enumerate}


The theorem now follows rather routinely from the index theorem in $K$-theory,
Theorem~\ref{K-indexthm}. The key step to getting the formula
is the analog of the Riemann-Roch formula in the context of
twisted K-theory, which we now give details.

Let $\pi: E\longrightarrow X$ be a spin$\bbC$ vector bundle over $X$,
$i: X\longrightarrow E$ the zero section embedding, and
$F \in \Kt^0(X, \cA)$. Then using the properties of the twisted Chern character
as above, we compute,
$$
\begin{array}{lcl}
\Ch_{\pi^*\cA}(i_! F) &=& \Ch_{\pi^*\cA}(i_! 1 \otimes \pi^*F)\\[+7pt]
		 &=& \Ch(i_! 1) \wedge \Ch_{\pi^*\cA}(\pi^* F).
\end{array}
$$ 
The standard Riemann-Roch formula asserts that
$$
\Ch(i_! 1) = i_*{\rm Todd}(E)^{-1}.
$$
Therefore we deduce the following Riemann-Roch formula for twisted
$K$-theory,
\begin{equation}\label{RR}
\Ch_{\pi^*\cA}(i_! F) = i_*\left\{{\rm Todd}(E)^{-1} \wedge \Ch_\cA( F)
\right\}.
\end{equation}

We need to compute $\Ch_\cA (\ind_t p)$ where
$$
p= [E_+, E_-, a] \in \Kc^0(T(Y/X)) \cong 
\Kc^0(T(Y/X), \rho^*\phi^*\cA).
$$
We will henceforth identify $T(Y/X)\cong T^*(Y/X)$ via a Riemannian
metric. Recall from \S6 that the topological index, $\ind_t = j_!^{-1}
\circ (Di)_! $ where $i  : Y \hookrightarrow X\times \bbR^{2N}$ is an
embedding that commutes with the projections
$\phi : Y\longrightarrow X$ and $\pi_1 : X\times  \bbR^{2N} \longrightarrow X$, and
$j : X \hookrightarrow X\times  \bbR^{2N}$ is the zero section
embedding. Therefore
$$
\Ch_\cA (\ind_t p) = \Ch_\cA (j_!^{-1} \circ (Di)_! p)
$$
By the Riemann-Roch formula for twisted K-theory \eqref{RR},
$$
\Ch_{\pi_1^* \cA}(j_! F) =  j_*\Ch_\cA(F)
$$
since $\pi_1: X\times  \bbR^{2N} \longrightarrow X$ is a trivial bundle.
Since
${\pi_1}_* j_* 1 = (-1)^n$, it follows that
for $\xi \in \Kc^0(X \times  \bbR^{2N}, \pi_1^*\cA) $, one has
$$
\Ch_\cA(j_!^{-1} \xi) = (-1)^n{\pi_1}_*\Ch_{\pi_1^*\cA}( \xi)
$$
Therefore
\begin{equation}\label{a1}
\Ch_\cA (j_!^{-1} \circ (Di)_! p)
=  (-1)^n {\pi_1}_*\Ch_{\pi_1^*\cA}((Di)_! p)
\end{equation}
By the Riemann-Roch formula for twisted
$K$-theory \eqref{RR},
\begin{equation}\label{a2}
\Ch_{\pi_1^*\cA}((Di)_!  p) = (Di)_*\left\{\rho^*{\rm Todd}(\cN)^{-1}
\wedge\Ch_ {\rho^*\phi^*\cA}( p)\right\}
\end{equation}
where $\cN$ is the complexified normal bundle to the embedding
$Di : T(Y/X) \longrightarrow X \times T\bbR^{2N}$, that is,
$\cN =  X \times T\bbR^{2N}/Di(T(Y/X))\otimes \bbC$.
Therefore ${\rm Todd}(\cN)^{-1} = {\rm Todd}(T(Y/X)\otimes \bbC)$ and
          \eqref{a2} becomes
$$
\Ch_{\pi_1^*\cA}((Di)_! p) =(Di)_*\left\{\rho^*{\rm Todd}(T(Y/X)\otimes \bbC)
\wedge\Ch_ {\rho^*\phi^*\cA}( p)\right\}.
$$
Therefore \eqref{a1} becomes
\begin{equation}
\begin{array}{lcl}
\Ch_\cA (j_!^{-1} \circ (Di)_! p) &= &
(-1)^n {\pi_1}_*(Di)_*\left\{\rho^*{\rm Todd}(T(Y/X)\otimes \bbC)  \wedge
\Ch_ {\rho^*\phi^*\cA}( p)\right\}\\[+7pt]
& = & (-1)^n \phi_*\rho_*\left\{\rho^*{\rm Todd}(T(Y/X)\otimes \bbC)  \wedge
\Ch_ {\rho^*\phi^*\cA}( p)\right\}
\end{array}
\end{equation}
since $\phi_* \rho_*= {\pi_1}_* (Di)_*$. Therefore
\begin{equation}\label{cohtopindex}
\Ch_\cA (\ind_t p) = (-1)^n \phi_*\rho_*\left\{\rho^*{\rm
Todd}(T(Y/X)\otimes \bbC)
\wedge
\Ch_ {\rho^*\phi^*\cA}(p)\right\},
\end{equation}
proving Theorem~\ref{mms2002.134}.

\end{proof}

\appendix
\section{Differential characters}\label{diffchar}

We will refine Lemma \ref{mms2002.64} to an equality of differential
characters. For an account of differential characters, see \cite{CS, HS}.

We first relate a connection $\tilde\gamma$ on $\tilde L$ to the 1-form $\gamma$ on $Y$. 
Consider the commutative diagram 

\begin{equation}
\xymatrix{
\phi^*(\tilde L) \ar[r]_{\tilde \phi}\ar[d]_{pr_1} & {\tilde L} \ar[d]_{\pi}\\
Y\ar[r]_{\phi} & X
}
\end{equation}
where $\phi^*(\tilde L)=Y\times {\mathbb S}$ as observed earlier, and 
$pr_1 : Y\times {\mathbb S} \longrightarrow Y$ denotes projection to the 
first factor. Then the connection 1-form $\tilde \gamma$ on $\tilde L$ with curvature
equal to $\beta$ is 
related to the 1-form $\gamma$ on $Y$ by ${\tilde \phi}^*(\tilde\gamma)= 
\gamma + \theta$ where $\theta$ is the Cartan-Maurer 1-form on $\bbS$.
If $\iota : Y \to Y \times {\mathbb S}$ denotes the inclusion map into the first factor,
then $\iota^*{\tilde \phi}^*(\tilde\gamma) = \gamma$. 

Now the circle bundle $\tilde L$ has a section $\tilde \tau : X\setminus M_1 \to \tilde L$,
where $M_1$ is a codimension 2 submanifold of $X$. We define a section 
$\tau :  X\setminus M_1 \to Y$ such that $\tilde\phi\circ\iota\circ\tau = \tilde\tau$.
Then we have a well defined singular 1-form $\varphi_1:= \tilde\tau^*(\tilde \gamma)
= \tau^*(\gamma)$ on $X$ with the property that $d\varphi_1 = \beta$.
The differential character associated to $\varphi_1$ is (cf. \cite{Cheeger})
$$
S(\varphi_1)(z) = \varphi_1(z') + \beta(c)
$$
where $z, z' \in Z_1(X, \bbZ)$ and $c\in C_2(X, \bbZ)$ is such that 
$\partial c = z-z'$, where $z'\cap M_1=\emptyset$.

The smooth map $u:X\to \bbR/\bbZ$ gives rise to a singular function $\varphi_0$ on $X$ as follows. If
$t \in \bbR/\bbZ$ is a regular value for $u$, then $M_0:=u^{-1}(t)$ is a codimension $1$ submanifold  of $X$, and
the Cartan-Maurer $1$-form $\theta$ on $\bbR/\bbZ$ is exact on $\bbR/\bbZ\setminus\{t\}$, say 
$dg$, where $g$ is a smooth function
on $\bbR/\bbZ\setminus\{t\}$. Then the pullback function $\varphi_0=u^*(g)$ is a smooth function on 
$X\setminus M_0$, ie it is a singular function
on $X$ such that $d\varphi_0= u^*(\theta)=\alpha$ is the associated global smooth $1$-form on $X$ with integer periods.

With $\mu$ as in Lemma~\ref{mms2002.64}, $\varphi_2:=\tau^*(\mu)= d\varphi_0\wedge\varphi_1$ 
is a singular 2-form on $X$, whose associated differential character is
$$
S(\varphi_2)(z) = d\varphi_0\wedge\varphi_1(z') + \alpha\wedge\beta(c)
$$
where $z, z' \in Z_2(X, \bbZ)$ and $c\in C_3(X, \bbZ)$ is such that 
$\partial c = z-z'$, where $z'\cap M_1=\emptyset$.
By the argument given above, it is also the differential character associated to 
the Azumaya bundle $\cA$ with connection.

\begin{lemma}
In the notation above, $S(\varphi_2) = S(\varphi_0)\star S(\varphi_1)$, where $\star$ denotes the
Cheeger-Simons product of differential characters. 
\end{lemma}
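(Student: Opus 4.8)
The plan is to recognize the product $S(\varphi_0)\star S(\varphi_1)$ as a Gysin pushforward, via the projection formula in differential cohomology, and then to identify that pushforward with the explicit presentation of $S(\varphi_2)$.

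First, fix the identifications. The character $S(\varphi_0)$, whose curvature is the closed $1$-form $\alpha=u^*\theta$ with integral periods, is exactly the degree-one differential character corresponding to the smooth map $u\colon X\to\bbR/\bbZ$; equivalently, for a regular value $t$ of $u$ with level set $M_0=u^{-1}(t)$ (a cooriented codimension-one submanifold) and inclusion $\iota_0\colon M_0\hookrightarrow X$, it is the pushforward $(\iota_0)_!(1)$ of the unit, in which the ``angular form'' of $u$, namely $\alpha$, plays the role of the Thom form. Cheeger's construction (\cite{Cheeger}) presents it by the singular primitive $\varphi_0$, with $\delta\varphi_0=\alpha-m_0$ where $m_0\in Z^1(X;\bbZ)$ is the weighted intersection cocycle of $M_0$. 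Likewise $S(\varphi_1)$, with curvature $\beta$, is the holonomy character of $(L,\nabla^L)$, presented by the singular primitive $\varphi_1$ (off the codimension-two locus $M_1$) with $\delta\varphi_1=\beta-m_1$; and $S(\varphi_2)$, presented by $\varphi_2=d\varphi_0\wedge\varphi_1$ with curvature $\alpha\wedge\beta$, is --- as noted just before the Lemma --- the differential character of the Azumaya bundle $\cA$ with its primitive connection.

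Next, I would apply the projection formula in differential cohomology: since $S(\varphi_0)=(\iota_0)_!(1)$, one gets
\[
S(\varphi_0)\star S(\varphi_1)=(\iota_0)_!(1)\star S(\varphi_1)=(\iota_0)_!\bigl(\iota_0^*S(\varphi_1)\bigr),
\]
and $\iota_0^*S(\varphi_1)$ is the holonomy character of the restricted bundle-with-connection $(L|_{M_0},\nabla^L|_{M_0})$, presented by $\varphi_1|_{M_0}$. It then remains to compute this pushforward at the level of singular-form presentations. Near $M_0$ the differential-cohomology Gysin map for $\iota_0$ amounts to wedging an extension of the data on $M_0$ with a primitive of ``$(\text{angular form})-(\text{delta-current of }M_0)$'', and this primitive is supplied globally by $\varphi_0$ (since $d\varphi_0=\alpha$ and $\varphi_0$ jumps by the coorientation across $M_0$); carrying this out shows that $(\iota_0)_!\bigl(\iota_0^*S(\varphi_1)\bigr)$ is presented by $d\varphi_0\wedge\varphi_1=\varphi_2$ with curvature $\alpha\wedge\beta$, i.e.\ it equals $S(\varphi_2)$. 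Equivalently one can verify this on cycles: for an integral $2$-cycle $z$ made transverse to $M_0$ and $M_1$, the left side evaluates to $S(\varphi_1)(z\cap M_0)$ plus a form-correction, and a Stokes computation on a $3$-chain $c$ with $\partial c=z-z'$, $z'\cap M_1=\emptyset$, using that $\alpha$ has integral periods, reduces this to $\int_{z'}\varphi_2+\int_c(\alpha\wedge\beta)\bmod\bbZ=S(\varphi_2)(z)$.

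The hard part is precisely this last identification of presentations. One must match the explicit differential-cohomology pushforward along a codimension-one embedding with the formula $S(\varphi_2)(z)=\varphi_2(z')+(\alpha\wedge\beta)(c)$, which comes down to recognizing $\varphi_0$ as the correct transverse primitive (equivalently, $\alpha$ as the angular form of $u$) and keeping every sign, support condition, and jump term straight --- the two descriptions localize the integral content of $\alpha\cup\beta$ differently, along $M_0\cap M_1$ on the product side and along $M_1$ (weighted by the non-integral form $\alpha$) on the $\varphi_2$ side. A clean alternative that sidesteps all transversality bookkeeping is to pass to \v Cech--Deligne cochains as in Appendix~\ref{appendix:cech}, identify the Cheeger--Simons product with the Deligne cup product (a standard comparison), and match the two explicit \v Cech representatives of $\alpha\cup\beta$; I would use whichever route proves shorter. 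Signs throughout are pinned down by the conventions fixing $\overline\delta=\overline\alpha\wedge\overline\beta$ in Lemma~\ref{mms2002.64}.
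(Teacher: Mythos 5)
Your strategy is not wrong, but as written it is a plan rather than a proof, and the step you yourself flag as ``the hard part'' is exactly the content of the lemma. The paper's argument is short precisely because it does not re-derive anything: it first checks, as you do implicitly, that the two characters have the same field strength $\bar\alpha\wedge\bar\beta$ (Lemma~\ref{mms2002.64}) and the same characteristic class $\alpha\cup\beta$ (via Appendix~\ref{appendix:cech} and \cite{CS}), and then it quotes Cheeger's explicit localized formula for the star product \cite{Cheeger}: for a $2$-cycle $z$ transverse to $M_1$,
\begin{equation*}
S(\varphi_0)\star S(\varphi_1)(z) \;=\; -\,d\varphi_0\wedge\varphi_1(z)\;+\;\sum_{p\in z\cap M_1}\varphi_0(p),
\end{equation*}
so that on cycles with $z\cap M_1=\emptyset$ this coincides with the defining formula $S(\varphi_2)(z)=\varphi_2(z')+\alpha\wedge\beta(c)$, which together with the agreement of field strengths pins the character down. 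Your proposal instead defers this identification to either a projection-formula computation or a \v Cech--Deligne comparison, neither of which is carried out, and ends with ``I would use whichever route proves shorter''; that is a genuine gap, since everything before it (curvatures, classes, the shape of the answer) does not distinguish $S(\varphi_2)$ from $S(\varphi_2)$ plus any flat character vanishing on cycles missing $M_1$.

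Two further points about the route you sketch. First, the identity $S(\varphi_0)=(\iota_0)_!(1)$ and the projection formula $S(\varphi_0)\star S(\varphi_1)=(\iota_0)_!\bigl(\iota_0^*S(\varphi_1)\bigr)$ are not available off the shelf in the Cheeger--Simons framework the paper works in: a Gysin map for differential characters along $M_0$ requires auxiliary geometric choices, its compatibility with the $\star$ product must be proved, and matching its output with the singular-primitive presentation $S(\,\cdot\,)$ is essentially the same transverse bookkeeping (jumps of $\varphi_0$ across $M_0$, localization on $z\cap M_0$ versus $z\cap M_1$, signs from graded commutativity) that Cheeger's formula packages for you. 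Second, your cycle-level ``alternative verification'' asserts that the left side equals $S(\varphi_1)(z\cap M_0)$ plus a form correction --- that is again a restatement of the needed product formula (in the opposite order), not a derivation. To close the argument, either carry out the Deligne-cochain comparison in full or, more efficiently, do what the paper does and invoke Cheeger's product formula directly.
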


\begin{proof}  First note that by \cite{CS}, the field strength of $S(\varphi_0)\star
S(\varphi_1)$ is $\bar\alpha \wedge \bar\beta$, which by Lemma~\ref{mms2002.64}
  is equal to $\bar\delta$ which is the field strength of $S(\varphi_2)$.  
  That is, $$S(\varphi_0)\star S(\varphi_1) (\partial c) = S(\varphi_2) (\partial c)$$
for every degree 3 integral cochain $c$.   
  
  By
  \cite{CS}, we see that the characteristic class of $S(\varphi_0) \star S(\varphi_1)$ is
  equal to the cup product $\alpha\cup\beta$, and by Appendix
  \ref{appendix:cech}, also equal to $\delta$, which is the characteristic
  class of $S(\varphi_2).$ Note that the image in real cohomology of $\alpha$ and
  $\beta$ is equal to $[\bar\alpha]$ and $[\bar\beta]$ respectively.
 
According to \cite{Cheeger}, if $z \in Z_2(X, \bbZ)$ is transverse to $M_1$, then
$$
S(\varphi_0)\star S(\varphi_1)(z) = - d\varphi_0\wedge\varphi_1(z) + \sum_{p\in z\cap M_1}\varphi_0(p)
$$
 In particular, if $z\cap M_1 = \emptyset$, then 
 $$
S(\varphi_0)\star S(\varphi_1)(z) = S(\varphi_2)(z),
$$
proving the lemma.
   \end{proof}

\section{\v Cech class of the Azumaya bundle}\label{appendix:cech}

Suppose that there is a line bundle $K$ over $Y$ such that $J \cong K
\boxtimes K'.$ That is, in this case, $\CI(X, \cA)\cong \Psi^{-\infty}(Y/X,
K)$ is the algebra of smoothing operators along the fibres of
$\phi:Y\longrightarrow X$ acting on sections of $K,$ and therefore has
trivial Dixmier-Douady invariant. Here $\cA_x= \Psi^{-\infty}(\phi^{-1}(x),
K\big|_{\phi^{-1}(x)})$ for all $x\in X.$

Conversely, suppose that the Dixmier-Douady invariant of $\cA$ is trivial,
where $\CI(X, \cA) = \CI(Y^{[2]}, J).$ Then there is a line bundle $K$ over
$Y$ such that $J \cong K \boxtimes K'.$ To see this, we use the connection
$\nabla^J $ preserving the primitive property of $J,$ and Lemma
\ref{mms2002.64} to see that $d\mu=\phi^*dB$, for some global 2-form $B\in
\Omega^2(X).$ Then $d(\mu - \phi^*(B))=0$, and $\pi_1^*(\mu - \phi^*(B)) -
\pi_2^*(\mu - \phi^*(B)) = F_{\nabla^J}.$ So $\mu - \phi^*(B)$ is a closed
2-form on $Y$ and can be chosen to have integral periods, since
$F_{\nabla^J}$ has integral periods (this is clear from the \v Cech
description below).  Therefore there is a line bundle $K$ on $Y$ with
connection, whose curvature is equal to $\mu - \phi^*(B)$ such that $J
\cong K\boxtimes K'.$

Suppose that $J_1$ and $J_2$ are two primitive line bundles over $Y^{[2]}$
and let $\cA_1$ and $\cA_2$ be the corresponding Azumaya bundles, that is,
$\CI(X, \cA_j) = \CI(Y^{[2]}, J_j),$ $j=1,2$. Then we conclude by the
argument above that $\cA_1 \cong \cA_2$ if and only if there is a line
bundle $K$ over $Y$ such that $J_1 \cong J_2 \otimes (K \boxtimes K').$

The main result that we want to show here is the following.

\begin{lemma} Suppose $\phi:Y\longrightarrow X,$ $L\longrightarrow X$ and
  $u:X\longrightarrow \UU(1)$ are as in the introduction.  Then the
  Dixmier-Douady class of the Azumaya bundle $\cA$ constructed from this
  data as in \S\ref{SmoothAzbun}, is equal to $\alpha \cup\beta$, where
  $\alpha \in \operatorname{H}^1(X, \bbZ)$ is the cohomology defined by $u$ and $\beta \in
  \operatorname{H}^2(X, \bbZ)$ is the Chern class of $L.$
\end{lemma}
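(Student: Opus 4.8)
The plan is to compute the Dixmier--Douady class of $\cA$ directly from a \v Cech cocycle, using the local presentation of $\cA$ from \S\ref{SmoothAzbun}. First I would fix a good open cover $\{U_i\}$ of $X$ and choose, over each $U_i$, a smooth local section $\tau_i\colon U_i\longrightarrow \phi^{-1}(U_i)\subset Y$ of $\phi$ and a smooth branch $f_i=\tfrac1{2\pi i}\log u\in\CI(U_i;\R)$. These choices produce the two elementary pieces of \v Cech data. On $U_{ij}$ the integers $n_{ij}=f_i-f_j\in\bbZ$ form a cocycle representing $\alpha\in\operatorname{H}^1(X;\bbZ)$; and the functions $\lambda_{ij}=s(\tau_i(\cdot),\tau_j(\cdot))\colon U_{ij}\longrightarrow\UU(1)$ are, by \eqref{mms2002.58} and its local form in Proposition~\ref{mms2002.66}, transition functions of $L$ with respect to the trivializations $\tau_i^*\gamma$, so they form a $\UU(1)$-valued cocycle representing $\beta\in\operatorname{H}^2(X;\bbZ)$; writing $\lambda_{ij}=e^{2\pi i h_{ij}}$ over the contractible $U_{ij}$, the integers $m_{ijk}=h_{ij}+h_{jk}+h_{ki}$ give the usual integral representative of $\beta$.

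Next I would exhibit the gluing of $\cA$. By Proposition~\ref{mms2002.66} the section $\tau_i$ identifies $\cA|_{U_i}$ with $\Psi^{-\infty}(\phi^{-1}(U_i)/U_i;K_{\tau_i})$, and over $U_{ij}$ one has $K_{\tau_i}\cong K_{\tau_j}\otimes\phi^*(L_{\tau_i,\tau_j})$ where $L_{\tau_i,\tau_j}=(\tau_i,\tau_j)^*J$ is, from \eqref{mms2002.74}, the line bundle over $U_{ij}$ associated to the $\bbZ$-bundle $\hat X|_{U_{ij}}$ with $\bbZ$ acting through $\lambda_{ij}$. Trivializing $L_{\tau_i,\tau_j}$ over the contractible $U_{ij}$ by means of the section of $\hat X|_{U_{ij}}$ given by the branch $f_i$, and trivializing the fibrewise Hilbert bundles $L^2(\phi^{-1}(U_i)/U_i;K_{\tau_i})$ by Kuiper's theorem (whose trivializations are unique up to homotopy and so do not affect the class), the transition between the two induced trivializations of $\cA$ over $U_{ij}$ becomes conjugation by a unitary $\hat u_{ij}\colon U_{ij}\longrightarrow\UU(\cH)$. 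On a triple overlap $U_{ijk}$ the composite $\hat u_{ij}\hat u_{jk}\hat u_{ki}$ is conjugation by the automorphism induced by the canonical trivialization of $L_{\tau_i,\tau_j}\otimes L_{\tau_j,\tau_k}\otimes L_{\tau_k,\tau_i}$ furnished by the primitivity of $J$ (Lemma~\ref{mms2002.71} evaluated at $(\tau_i(x),\tau_j(x),\tau_k(x))\in Y^{[3]}$; equivalently the cocycle identity $\lambda_{ij}\lambda_{jk}\lambda_{ki}=1$), so $\hat u_{ij}\hat u_{jk}\hat u_{ki}=c_{ijk}\cdot\Id$ for a function $c_{ijk}\colon U_{ijk}\longrightarrow\UU(1)$ recording the discrepancy between the chosen trivialization of that product line bundle and this canonical one. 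By definition $[\{c_{ijk}\}]$ is the Dixmier--Douady class of $\cA$ in \v Cech cohomology $\operatorname{H}^2(X;\underline{\UU(1)})\cong\operatorname{H}^3(X;\bbZ)$.

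It then remains to evaluate $c_{ijk}$ and to recognise it. Tracing the definition of $L_{\tau_i,\tau_j}$ through the identification $\hat X|_{U_{ij}}\cong U_{ij}\times\bbZ$ afforded by $f_i$ and the $\bbZ$-action by $\lambda_{ij}$ (so that the trivialization coming from $f_j$ differs from that coming from $f_i$ by the factor $\lambda_{ij}^{\,n_{ij}}$), one finds that $\{c_{ijk}\}$ is cohomologous, up to sign, to the \v Cech $2$-cocycle $\{\lambda_{jk}^{\,n_{ij}}\}$ on $U_{ijk}$. A direct check confirms the cocycle condition $\lambda_{kl}^{n_{jk}}\lambda_{kl}^{-n_{ik}}\lambda_{jl}^{n_{ij}}\lambda_{jk}^{-n_{ij}}=1$ using $n_{ik}=n_{ij}+n_{jk}$ and $\lambda_{jl}=\lambda_{jk}\lambda_{kl}$, and $\{\lambda_{jk}^{\,n_{ij}}\}$ is precisely the standard \v Cech representative of the cup product of $\alpha=[\{n_{ij}\}]\in\operatorname{H}^1(X;\bbZ)$ with $\beta=[\{\lambda_{ij}\}]\in\operatorname{H}^1(X;\underline{\UU(1)})\cong\operatorname{H}^2(X;\bbZ)$, that is, the image of the integral cocycle $\{n_{ij}m_{jkl}\}$ under $\operatorname{H}^3(X;\bbZ)\cong\operatorname{H}^2(X;\underline{\UU(1)})$. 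This gives $\delta(\cA)=\alpha\cup\beta$. (Alternatively one could read this off the curvature formula in Lemma~\ref{mms2002.64} by passing to differential characters, but that route goes through the present \v Cech computation, so it is the \v Cech argument that has to be carried out here.)

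The step I expect to be the main obstacle is the explicit identification of the triple-overlap cocycle: one has three nested choices --- the section $\tau_i$, the branch $f_i$, and the Kuiper trivializations of the fibrewise Hilbert bundles --- and one must check both that the infinite-dimensional ambiguities really are washed out by homotopy and that, with consistent sign and ordering conventions, the defect collapses exactly to $\{\lambda_{jk}^{\,n_{ij}}\}$ rather than to an a priori messier cocycle. Once the cocycle has been pinned down, matching it to $\alpha\cup\beta$ is the routine \v Cech identity indicated above.
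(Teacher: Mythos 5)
Your proposal is correct and takes essentially the same route as the paper's own proof in Appendix~\ref{appendix:cech}: a \v Cech computation based on local sections $\tau_i$ of $\phi$, the transition integers $n_{ij}$ of $\hat X$ representing $\alpha$, and the $\UU(1)$-cocycle $\lambda_{ij}=s(\tau_i,\tau_j)$ representing $\beta$, ending with the standard cup-product representative of $\alpha\cup\beta$. The triple-overlap identification you flag as the main obstacle is exactly what the paper carries out directly, bypassing the Kuiper/transition-unitary detour: it pulls $J$ back by $(\tau_i\times\tau_j)$, takes the explicit nowhere-zero sections $\sigma_{ij}=s_j^{-n_{ij}}$, and a three-line computation gives $\sigma_{ij}\sigma_{jk}=c_{kj}^{-n_{ji}}\sigma_{ik}$, so the Dixmier--Douady cocycle is (up to the same sign conventions you mention) the cup-product cocycle $\lambda_{jk}^{-n_{ij}}$.
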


\begin{proof} As noted above, the Dixmier-Douady invariant of $\cA$ is the
  degree 3 cohomology class on $X$ associated to the primitive line bundle
  $J$ over $Y^{[2]}.$

As argued in \S\ref{decompGeom}, the line bundle $L$ gives rise to a
character $s:Y^{[2]} \to \UU(1)$. Suppose that $\tau_i:U_i \to Y$ are local
sections of $Y$. Then it is clear from \S\ref{decompGeom} that $c_{ij} :=
s(\tau_i, \tau_j)$ defines a $ \UU(1)$-valued Cech 1-cocycle representing
the first Chern class of $L$.

Using the same local sections of $Y$, we see that $J_{ij}:=(\tau_i  
\times \tau_j)^*J = L_j^{-n_{ij}}$,
where $n_{jk}: U_j\cap U_k\to \bbZ$ denotes the transition functions  
of $\hat X$.
If $s_j$ is a local nowhere zero section of $L_j$, then $ 
\sigma_{ij} :=s_j^{-n_{ij}}$ is a local
nowhere zero section of $J_{ij}$. We compute,
\begin{align}
\sigma_{ij}\sigma_{jk} &= s_j^{-n_{ij}} s_k^{-n_{jk}}\\
& = c_{jk}^{-n_{ij}}  s_k^{-n_{ij}}  s_k^{-n_{jk}}\\
& = c_{kj}^{-n_{ji}} s_k^{-n_{ik}} = c_{kj}^{-n_{ji}} \sigma_{ik}.
\end{align}
Therefore the $\UU(1)$-valued Cech 2-cocycle associated to $J$ is  
$d_{ijk} :=  c_{kj}^{-n_{ji}}$
But it is well known  (cf\@.~ equation (1-18), page 29, \cite{Bry})
that the right hand side represents the cup product
of the Cech cocycles $[c]$ and $[-n]$, that is, $[d]= [c] \cup [-n]
=- \beta \cup\alpha = \alpha\cup\beta\in \operatorname{H}^3(X, \bbZ)$,
proving the lemma.
\end{proof}

\section{The universal case}\label{appendix:universal}

Let $\phi: Y \rightarrow X$ be a fibre bundle of compact manifolds,
$L\to X$ a line bundle over $X$ with the property that the pullback
$\phi^*(\beta)=0$ in $\operatorname{H}^2(Y, \bbZ)$, where $\beta \in \operatorname{H}^2(X, \bbZ)$
is the first Chern class of $L$.

\begin{lemma}\label{append:univ}
In the notation above,  $\phi^*(\beta)=0$ in $\operatorname{H}^2(Y, \bbZ)$ if and  
only if
there is a $\tilde \beta \in \operatorname{H}^2(B{\rm Diff}(Z), \bbZ)$ such that
$\beta= f^*(\tilde\beta)$ in $\operatorname{H}^2(X, \bbZ)$,
where $f\colon~X~\to~B{\rm Diff}(Z)$ is the classifying map for $ 
\phi:Y \to X$, and
$Z$ is the typical fiber of $\phi:Y \to X.$
\end{lemma}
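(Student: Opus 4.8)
The plan is to realise $\phi\colon Y\to X$ as the pull-back of the universal $Z$-bundle $p\colon EZ\to B{\rm Diff}(Z)$ along the classifying map $f$, so that there is a map of fibrations $\tilde f\colon Y\to EZ$ covering $f$ with $\phi^{*}\circ f^{*}=\tilde f^{*}\circ p^{*}$, and then to compare the cohomology Serre spectral sequences of the two fibrations in total degree $2$.

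First I would compute $\ker\bigl(\phi^{*}\colon\operatorname{H}^{2}(X,\bbZ)\to\operatorname{H}^{2}(Y,\bbZ)\bigr)$ from the Serre spectral sequence of $Z\hookrightarrow Y\xrightarrow{\phi}X$, with $E_{2}^{s,t}=\operatorname{H}^{s}(X;\mathcal{H}^{t}(Z;\bbZ))$ and local system given by the monodromy action of $\pi_{1}(X)$ on $\operatorname{H}^{1}(Z;\bbZ)$. No differential leaves $E_{r}^{2,0}$, so every class of $\operatorname{H}^{2}(X,\bbZ)$ is a permanent cycle and $\phi^{*}\beta$ is its image in $E_{\infty}^{2,0}\subseteq\operatorname{H}^{2}(Y,\bbZ)$; the only differential into $E_{r}^{2,0}$ is $d_{2}\colon E_{2}^{0,1}\to E_{2}^{2,0}$. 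Hence $\phi^{*}\beta=0$ precisely when $\beta$ lies in the image of the transgression $d_{2}\colon\operatorname{H}^{0}(X;\mathcal{H}^{1}(Z;\bbZ))\to\operatorname{H}^{2}(X,\bbZ)$, i.e.\ precisely when $\beta$ is transgressed by a monodromy-invariant integral $1$-class on the fibre.

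Next I would run the identical analysis for the universal bundle $p\colon EZ\to B{\rm Diff}(Z)$ and invoke naturality of the spectral sequence under $\tilde f$: the induced morphism commutes with $d_{2}$, with $f^{*}$ on the $(2,0)$-line, and with the evident comparison of coefficient systems on the $(0,1)$-line, so the image of $d_{2}$ over $X$ is the $f^{*}$-image of the image of $d_{2}$ for $EZ\to B{\rm Diff}(Z)$, and this last image is exactly $\ker\bigl(p^{*}\colon\operatorname{H}^{2}(B{\rm Diff}(Z),\bbZ)\to\operatorname{H}^{2}(EZ,\bbZ)\bigr)$. Assembling the two computations yields both implications: if $\beta=f^{*}\tilde\beta$ with $\tilde\beta$ taken transgressive (so $p^{*}\tilde\beta=0$), then $\phi^{*}\beta=\tilde f^{*}p^{*}\tilde\beta=0$; conversely, if $\phi^{*}\beta=0$, then the monodromy-invariant $1$-class on the fibre transgressing to $\beta$ lifts to a ${\rm Diff}(Z)$-invariant class whose universal transgression $\tilde\beta\in\operatorname{H}^{2}(B{\rm Diff}(Z),\bbZ)$ satisfies $f^{*}\tilde\beta=\beta$.

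The step I expect to be the main obstacle is precisely this last universality argument. Naturality of the two spectral sequences is routine, but one must handle the local coefficient systems carefully — the monodromy of $\phi$ factors the $\pi_{1}(X)$-action on $\operatorname{H}^{1}(Z;\bbZ)$ through $\pi_{0}{\rm Diff}(Z)$ — and, for the `only if' direction, one must be able to promote the transgressing fibre class from merely $\pi_{1}(X)$-invariant to ${\rm Diff}(Z)$-invariant, equivalently to a genuinely universal class; this is automatic when ${\rm Diff}(Z)$ is connected and otherwise reflects the expected dependence on the monodromy. The remaining ingredients — the vanishing of differentials off the $(2,0)$-line, the identification of the universal $d_{2}$-image with $\ker p^{*}$, and the two-line assembly at the end — are bookkeeping.
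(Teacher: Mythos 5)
The paper offers no real argument for this lemma---it simply declares it ``standard algebraic topology'' and calls the `if' direction trivial---so the only thing to assess is your spectral-sequence argument on its own terms, and the obstacle you flag at the end is a genuine gap rather than bookkeeping. The comparison map on $E_2^{0,1}$-terms induced by $\tilde f$ is the inclusion of the $\pi_0{\rm Diff}(Z)$-invariants of $\operatorname{H}^1(Z;\bbZ)$ into the invariants of the (possibly much smaller) monodromy image of $\pi_1(X)$; it is not surjective in general, so naturality only yields $f^*\bigl(\operatorname{im} d_2^{\rm univ}\bigr)\subseteq \operatorname{im} d_2^{X}$, not the equality your assembly step uses. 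Hence the `only if' implication does not follow: a $\pi_1(X)$-invariant fibre class transgressing to $\beta$ need not be promoted to a ${\rm Diff}(Z)$-invariant one. Symmetrically, in the `if' direction you quietly strengthen the hypothesis to ``$\tilde\beta$ taken transgressive, so $p^*\tilde\beta=0$''; the lemma only supplies $\beta=f^*\tilde\beta$, and without $p^*\tilde\beta=0$ the identity $\phi^*\beta=\tilde f^*p^*\tilde\beta$ gives nothing.

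Moreover this gap cannot be closed, because in the very case the appendix goes on to use the needed inputs fail. For $Z=\Sigma_g$, $g\ge 2$, the universal total space is a classifying space $B\Gamma_{g,1}$ for the once-marked mapping class group, and $\pi_0{\rm Diff}^+(\Sigma_g)=\Gamma_g$ acts on $\operatorname{H}^1(\Sigma_g;\bbZ)$ through $\operatorname{Sp}(2g,\bbZ)$, which has no nonzero invariants; so the universal $E_2^{0,1}$ vanishes, the universal transgression is zero, and $p^*$ is \emph{injective} on $\operatorname{H}^2(B{\rm Diff}^+(\Sigma_g);\bbZ)$. Since $e_1\ne 0$ in $\operatorname{H}^2(B\Gamma_g;\bbQ)$ for $g\ge 3$, one has $p^*e_1\ne 0$ (on $B\Gamma_{g,1}$ it is $\kappa_1-\psi_1$); choosing a closed oriented surface $C$ mapping to $B\Gamma_{g,1}$ on which $p^*e_1$ evaluates nontrivially produces a $\Sigma_g$-bundle over $X=C$ with a section, with $\beta=f^*e_1$ pulled back from $B{\rm Diff}(Z)$ but $\phi^*\beta\ne 0$. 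So the `if' direction as stated (the one the paper relies on) needs the extra hypothesis $\tilde\beta\in\ker p^*$, and for surface bundles and the Mumford--Morita--Miller class that kernel is zero. What your computation does establish correctly is the intrinsic criterion: $\phi^*\beta=0$ iff $\beta$ is transgressed from a monodromy-invariant class in $\operatorname{H}^1(Z;\bbZ)$; a statement in terms of $B{\rm Diff}(Z)$ requires either $p^*\tilde\beta=0$ (for `if') or full ${\rm Diff}(Z)$-invariance of the transgressing class (for `only if'), which is automatic in situations like the principal-bundle lemma later in the appendix, where the universal total space is contractible, but not here.
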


This follows in a straightforward way from standard algebraic  
topology. The direction that
we will mainly use is trivial to prove, viz. if
there is a class $\tilde \beta \in \operatorname{H}^2(B{\rm Diff}(Z), \bbZ)$ such that
$\beta= f^*(\tilde\beta)$ in $\operatorname{H}^2(X, \bbZ)$, then $\phi^*(\beta)=0$ in  
$\operatorname{H}^2(Y, \bbZ)$.

Therefore we see that given any fibre bundle of compact manifolds
$\phi: Y \rightarrow X$ with typical fiber $Z$, and $\beta \in   
f^*(\operatorname{H}^2(B{\rm Diff}(Z), \bbZ))
\subset \operatorname{H}^2(X, \bbZ)$ (that is, if $\beta$ is a characteristic class  
of the fiber bundle
$\phi:Y \to X$), then $\phi^*(\beta)=0$ in $\operatorname{H}^2(Y, \bbZ)$,
satisfying the hypotheses of our main index theorem.

But what are line bundles on $B{\rm Diff}(Z)$? Since roughly speaking,
$B{\rm Diff}(Z) = {\rm Metrics}(Z)/{\rm Diff}(Z)$, where ${\rm Metrics} 
(Z)$ denotes
the contractible space of
all Riemanian metrics on $Z$, the theory of anomalies in gravity  
constructs
line bundles on $B{\rm Diff}(Z)$ via determinant
line bundles of index bundles of families of twisted Dirac operators  
obtained
by varying the Riemannian metric on $Z$, cf\@.~\cite{ASZ}.

In particular,  let $\phi: Y \rightarrow X$ be a fibre bundle of compact  
manifolds,
with typical fiber a compact Riemann surface $\Sigma_g$
of genus $g\ge 2$. Then $T(Y/X)$ is an oriented rank 2 bundle over $Y.$
Define $\beta = \phi_*(e \cup e) \in \operatorname{H}^2(X, \bbZ)$, where
$e := e(T(Y/X)) \in \operatorname{H}^2(Y, \bbZ)$ is the Euler class of $T(Y/X)$. By  
naturality of this
construction, $\beta = f^*(e_1)$, where $e_1 \in \operatorname{H}^2(B{\rm Diff} 
(\Sigma_g), \bbZ)$
and $f\colon X \to B{\rm Diff}(\Sigma_g)$ is the classifying map for $ 
\phi:Y \to X$.
$e_1$ is known as the universal first Mumford-Morita-Miller class,
and $\beta$ is the first Mumford-Morita-Miller class of $\phi: Y  
\rightarrow X$, cf\@.~Chapter 4 in \cite{Morita}.
Therefore by Lemma \ref{append:univ}, we have the following.

\begin{lemma}
In the notation above, let $\phi: Y \rightarrow X$ be a fibre bundle of  
compact manifolds,
with typical fiber a compact Riemann surface $\Sigma_g$
of genus $g\ge 2$, and let $\beta \in \operatorname{H}^2(X, \bbZ)$ be a multiple of the
first Mumford-Morita-Miller class of $\phi: Y \rightarrow X$.
Then $\phi^*(\beta) = 0$ in $\operatorname{H}^2(Y, \bbZ)$.
\end{lemma}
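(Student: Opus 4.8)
The plan is to obtain the statement directly from Lemma~\ref{append:univ}: one only needs to check that $\beta$ is a characteristic class of the fibration, i.e.\ that $\beta=f^*\tilde\beta$ for some $\tilde\beta\in\operatorname{H}^2(B{\rm Diff}(\Sigma_g);\bbZ)$, where $f\colon X\to B{\rm Diff}(\Sigma_g)$ classifies $\phi$.

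First I would record the naturality of the construction $\phi\mapsto\phi_*(e\cup e)$. Since $T(Y/X)$ is an oriented plane bundle, its Euler class $e=e(T(Y/X))\in\operatorname{H}^2(Y;\bbZ)$ is defined and is natural under pullback: for a smooth map $h\colon X'\to X$ with induced fibration $\phi'\colon Y'=h^*Y\to X'$ and map of total spaces $\tilde h\colon Y'\to Y$ one has $T(Y'/X')=\tilde h^*T(Y/X)$, hence $e(T(Y'/X'))=\tilde h^*e$; combined with the base-change identity $\phi'_*\circ\tilde h^*=h^*\circ\phi_*$ for fibre integration, this gives $\phi'_*\big(e(T(Y'/X'))\cup e(T(Y'/X'))\big)=h^*\phi_*(e\cup e)$. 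Taking $h=f$, so that $Y$ is the pullback of the universal $\Sigma_g$-bundle over $B{\rm Diff}(\Sigma_g)$, yields $\phi_*(e\cup e)=f^*(e_1)$ with $e_1\in\operatorname{H}^2(B{\rm Diff}(\Sigma_g);\bbZ)$ the universal first Mumford-Morita-Miller class---the identity already noted just before the statement.

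Second, since $\operatorname{H}^2(B{\rm Diff}(\Sigma_g);\bbZ)$ is an abelian group, every integer multiple $m\,e_1$ again lies in it, and the hypothesis on $\beta$ is precisely that $\beta=m\,\phi_*(e\cup e)$ for some $m\in\bbZ$; therefore $\beta=f^*(m\,e_1)$ with $m\,e_1\in\operatorname{H}^2(B{\rm Diff}(\Sigma_g);\bbZ)$. Applying the direction of Lemma~\ref{append:univ} singled out there---``if $\beta=f^*\tilde\beta$ then $\phi^*\beta=0$''---with $\tilde\beta=m\,e_1$ gives $\phi^*\beta=0$ in $\operatorname{H}^2(Y;\bbZ)$, which is the claim. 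The only substantive input is Lemma~\ref{append:univ}; beyond it the argument is just naturality of the Euler class and of fibre integration together with the trivial observation that ``pulled back from $B{\rm Diff}(\Sigma_g)$'' is preserved under multiplication by integers. Hence I do not expect a real obstacle at this stage: the work sits inside Lemma~\ref{append:univ}, whose used half amounts to the fact that a degree-two class coming from $B{\rm Diff}(\Sigma_g)$ becomes trivial on the universal total space---equivalently, lies in the image of the transgression $d_2$ in the Leray--Serre spectral sequence of $\Sigma_g\to Y\to X$---with the hypothesis $g\ge 2$ entering only through the convenient $K(\pi,1)$ models for $B{\rm Diff}(\Sigma_g)$ and its pointed analogue.
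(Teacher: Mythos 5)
Your proposal is correct and follows essentially the same route as the paper: the paper likewise observes that $\beta=\phi_*(e\cup e)$ is, by naturality, the pullback $f^*(e_1)$ of the universal class on $B{\rm Diff}(\Sigma_g)$, so any integer multiple is also pulled back from $B{\rm Diff}(\Sigma_g)$, and the conclusion $\phi^*(\beta)=0$ is then immediate from the easy direction of Lemma~\ref{append:univ}. Your extra details on base change for fibre integration only flesh out the naturality statement the paper asserts without proof.
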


Such choices satisfy the hypotheses of our main index theorem. In  
fact, if $\phi:Y\longrightarrow X$
be as above, and in addition let $X$ be a closed Riemann surface.
Then Proposition 4.11 in \cite{Morita} asserts that $\langle e_1, [X] 
\rangle = {\rm Sign}(Y)$, where
$ {\rm Sign}(Y)$ is the signature of the 4-dimensional manifold $Y$,  
which is originally a result of
Atiyah. As a consequence, Morita is able to produce infinitely many  
surface bundles $Y$ over
$X$ that have non-trivial  first Mumford-Morita-Miller class.

On the other hand, given any $\beta \in \operatorname{H}^2(X, \bbZ)$, we know that  
there is
a fibre bundle of compact manifolds
$\phi: Y \rightarrow X$ such that $\phi^*(\beta)=0$ in $\operatorname{H}^2(Y, \bbZ)$.
In fact we can choose $Y$ to be the total space of a principal ${\rm U} 
(n)$ bundle
over $X$ with first Chern class $\beta$.  Here we can also replace $ 
\UU(n)$
by any compact Lie group $G$ such that $\operatorname{H}^1(G, \bbZ)$ is nontrivial  
and torsion-free,
such as the torus $\bbT^n$.

\begin{lemma}
Let $\phi: Y \rightarrow X$ be a fibre bundle of compact manifolds with  
typical fiber
$Z$ and $\beta \in \operatorname{H}^2(X, \bbZ)$.
Let $\pi: P \to X$ be a principal ${\rm U}(n)$-bundle whose first  
Chern class is $\beta$. Then the fibred
product  $\phi \times \pi : Y\times_X P \to X$ is a fiber bundle with  
typical fiber $Z \times {\rm U}(n)$,
and has the property that $(\phi \times \pi )^*(\beta) = 0$ in $\operatorname{H}^2(Y 
\times_X P, \bbZ).$
\end{lemma}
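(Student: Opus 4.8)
The plan is to reduce the vanishing $(\phi\times\pi)^{*}\beta=0$ to the classical fact that the Euler class of a principal circle bundle pulls back to zero on its total space, using the determinant homomorphism $\det\colon\UU(n)\longrightarrow\UU(1)$ to manufacture such a circle bundle out of $\pi\colon P\longrightarrow X$. First, though, I would dispatch the (routine) assertion that $Y\times_XP\longrightarrow X$ is a fibre bundle with typical fibre $Z\times\UU(n)$: over any open $U\subset X$ carrying simultaneous local trivializations $\phi^{-1}(U)\cong U\times Z$ and $\pi^{-1}(U)\cong U\times\UU(n)$ one has $(\phi\times\pi)^{-1}(U)\cong U\times(Z\times\UU(n))$, with transition cocycle the product of those of $\phi$ and $\pi$.

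For the cohomological claim, set $\bar P=P/\SU(n)$, where $\SU(n)=\ker(\det)$ is a closed normal subgroup of $\UU(n)$. The residual right action of $\UU(n)/\SU(n)\cong\UU(1)$ on $\bar P$ is free with quotient $X$, so $\bar\pi\colon\bar P\longrightarrow X$ is a principal $\UU(1)$-bundle, and the quotient map $q\colon P\longrightarrow\bar P$ is a map over $X$, i.e. $\pi=\bar\pi\circ q$. By the definition of the first Chern class of a $\UU(n)$-bundle — the universal class $c_1\in\operatorname{H}^2(B\UU(n);\bbZ)$ is the pull-back under $B\!\det$ of the generator of $\operatorname{H}^2(B\UU(1);\bbZ)$ — the Euler (= first Chern) class of $\bar P$ equals exactly $\beta$. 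Now the Gysin sequence of the circle bundle $\bar\pi$,
\[
\operatorname{H}^0(X;\bbZ)\xrightarrow{\;\cup\,e(\bar P)\;}\operatorname{H}^2(X;\bbZ)\xrightarrow{\;\bar\pi^{*}\;}\operatorname{H}^2(\bar P;\bbZ),
\]
gives $\bar\pi^{*}\beta=\bar\pi^{*}(1\cup e(\bar P))=0$, hence $\pi^{*}\beta=q^{*}\bar\pi^{*}\beta=0$ in $\operatorname{H}^2(P;\bbZ)$. Finally the second projection $\mathrm{pr}_2\colon Y\times_XP\longrightarrow P$ obeys $\phi\times\pi=\pi\circ\mathrm{pr}_2$, so $(\phi\times\pi)^{*}\beta=\mathrm{pr}_2^{*}(\pi^{*}\beta)=0$, which is the assertion.

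There is no real obstacle here; the one point deserving care is the identification of $e(\bar P)$ with $\beta$, i.e. the fact that the first Chern class of $P$ is transgressive from the generator of $\operatorname{H}^1(\UU(n);\bbZ)$ (equivalently, is carried by the associated determinant circle bundle). This is also precisely what makes the remark following the lemma go through for a general compact connected $G$ with $\operatorname{H}^1(G;\bbZ)$ free: one replaces $\det$ by the abelianization $G\longrightarrow G/[G,G]$ and takes $\beta$ to be pulled back from $\operatorname{H}^2(B(G/[G,G]);\bbZ)$. An equivalent argument avoiding $\bar P$ runs through the Serre spectral sequence of $\UU(n)\longrightarrow P\longrightarrow X$: the edge homomorphism identifies $\pi^{*}\beta$ with the image of $\beta$ in $E_\infty^{2,0}=\operatorname{H}^2(X;\bbZ)/\operatorname{im}(d_2)$, and the transgression $d_2\colon\operatorname{H}^1(\UU(n);\bbZ)\longrightarrow\operatorname{H}^2(X;\bbZ)$ has image $\bbZ\beta\ni\beta$.
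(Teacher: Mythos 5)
Your proposal is correct and follows essentially the same route as the paper: the paper's proof consists precisely of the commutative square $Y\times_X P\to P\to X$ (your final step $(\phi\times\pi)^{*}\beta=\mathrm{pr}_2^{*}\pi^{*}\beta$), together with the standard fact, asserted without proof in the preceding paragraph, that the first Chern class of a principal $\UU(n)$-bundle pulls back to zero on its total space. Your additional justification of that fact via the determinant circle bundle $P/\SU(n)$ and the Gysin sequence (or the transgression in the Serre spectral sequence) is accurate and simply fills in what the paper treats as known.
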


This follows from the obvious commutativity of the
  following diagram,
  \begin{equation}
\begin{CD}
Y\times_X P  @>{pr_1}>> Y \\
       @V{pr_2}VV          @VV{\phi}V     \\
P   @>\pi>>  X.
\end{CD}\end{equation}
Hence this data also satisfy the hypotheses of our main index theorem.

The construction of the universal fibre bundle of Riemann surfaces which we
will describe next, is well known, cf. \cite{ASZ, AS84, F86}.  Let $\Sigma$
be a compact Riemann surface of genus $g$ greater than $1,$ $\fM_{(-1)}$ the
space of all hyperbolic metrics on $\Sigma$ of curvature equal to $-1,$ and
${\rm Diff}_+(\Sigma)$ the group of all orientation preserving
diffeomorphisms of $\Sigma$. Then the quotient
$$
\fM_{(-1)}/{\rm Diff}_+(\Sigma) =  \cM_g
$$ 
is a noncompact orbifold, namely the moduli space of Riemann surfaces of 
genus equal to $g$.
The fact that $\cM_g$ has singularities can be dealt with in several ways, 
for instance by going to a finite smooth cover, and the noncompactness of $\cM_g$
can be dealt with for instance by considering compact submanifolds. We will 
however not deal with these delicate issues in the discussion below.
The group ${\rm Diff}_+(\Sigma)$
also acts on $\Sigma \times \fM_{(-1)}$ via $g(z, h) = (g(z), g^*h)$
and the resulting smooth fibre bundle, 
\begin{equation}
\pi: Y = (\Sigma \times \fM_{(-1)})/{\rm Diff}_+(\Sigma) \longrightarrow 
\fM_{(-1)}/{\rm Diff}_+(\Sigma) =\cM_g
\label{UniBun}
\end{equation}
is the {\em universal bundle} of genus $g$ Riemann surfaces. The classifying map for
\eqref{UniBun} is the identity map on $\cM_g$ so $\pi$ is maximally nontrivial in a sense made
precise below.  

As defined above, let
$$
e_1=e_1(Y/\cM_g) = \pi_*(e\cup e) \in H^2(\cM_g; \bbZ)
$$
be the first Mumford-Morita-Miller class of $\pi\colon 
 Y \to \cM_g.$

A theorem of Harer \cite{Harer, Morita} asserts that:
\begin{align*}
H^2(\cM_g; \bbQ) & = \bbQ(e_1);\\ 
H^1(\cM_g; \bbQ) & =\{0\}.
\end{align*}
Our next goal is to define 
a line bundle $\cL$ over $\cM_g$ such that $c_1(\cL) =k e_1$ for some 
$k\in \ZZ$. This line 
bundle then automatically has the property that $\pi^*(\cL)$ is trivializable since $e_1$ 
is a characteristic class of the fibre bundle $\pi:Y\longrightarrow\cM_g$.
This is exactly
the data that is needed to define a projective family of Dirac operators. The line bundle
$\cL$ turns out to be a power of the determinant line bundle of the virtual vector bundle $\Lambda$ 
known as the Hodge bundle, which is defined 
using the Gysin map in K-theory. 
$$\Lambda= \pi_!(T(Y/ \cM_g)) \in K^0( \cM_g).$$ Then  
$\det(\Lambda)$ is actually a line bundle over $\cM_g$. 
Next we need the following special Grothendieck-Riemann-Roch (GRR) calculation.

\begin{lemma}
In the notation above, one has the following identity of first Chern classes,
$$
 c_1( \pi_!(T(Y/\cM_g)) 
 =\frac{13}{12} \pi_*(c_1(T(Y/\cM_g))^2).
 $$
 \end{lemma}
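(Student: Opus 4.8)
The plan is to apply the Grothendieck--Riemann--Roch (GRR) theorem to the proper map $\pi\colon Y\to\cM_g$ with coefficient bundle $T(Y/\cM_g)$, and then read off the degree-two component. Since the fibres of $\pi$ are Riemann surfaces, $T(Y/\cM_g)$ is a \emph{complex line bundle}; write $x=c_1(T(Y/\cM_g))\in\operatorname{H}^2(Y;\bbZ)$, which under the chosen fibrewise complex orientation coincides with the Euler class $e=e(T(Y/\cM_g))$ used above. GRR then reads
\[
\Ch\big(\pi_!(T(Y/\cM_g))\big)=\pi_*\!\left(\Ch(T(Y/\cM_g))\cup\Td(T(Y/\cM_g))\right)\in\operatorname{H}^{\even}(\cM_g;\bbQ).
\]

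First I would expand the two factors on the right as power series in $x$: $\Ch(T(Y/\cM_g))=e^{x}=1+x+\tfrac12x^2+\cdots$ and $\Td(T(Y/\cM_g))=\dfrac{x}{1-e^{-x}}=1+\tfrac12x+\tfrac1{12}x^2+\cdots$. Multiplying, the coefficient of $x^2$ in the product is $\tfrac12+\tfrac12+\tfrac1{12}=\tfrac{13}{12}$ (while the $x^0$ and $x^1$ coefficients are $1$ and $\tfrac32$). Since $\pi_*$ lowers cohomological degree by $2$, the real fibre dimension, only the $x^2$ term feeds into $\operatorname{H}^2(\cM_g)$, so the degree-two part of $\pi_*(\Ch(T)\cup\Td(T))$ equals $\tfrac{13}{12}\,\pi_*(x^2)=\tfrac{13}{12}\,\pi_*\big(c_1(T(Y/\cM_g))^2\big)$. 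On the left side, the degree-two component of $\Ch(\pi_!(T(Y/\cM_g)))$ is $c_1(\pi_!(T(Y/\cM_g)))$, since $\Ch=\operatorname{rank}+c_1+\cdots$. Equating the degree-two components of the two sides of GRR yields the asserted identity.

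The only genuine issue, requiring care rather than computation, is that $\cM_g$ is a non-compact orbifold, so the smooth compact form of GRR does not literally apply. I would deal with this exactly as flagged in the text: either pass to a finite smooth manifold cover of $\cM_g$ given by a level structure, over which the universal curve and the vertical tangent bundle are honest bundles and ordinary GRR applies, and then descend using naturality of $\pi_*$, $c_1$ and the injectivity of pullback on rational cohomology; or invoke the Deligne--Mumford stack version of GRR. In either case the coefficient $\tfrac{13}{12}$ comes from the power-series bookkeeping above and is unchanged. As a consistency check, running the same computation with $T(Y/\cM_g)$ replaced by its dual $\omega_{Y/\cM_g}=T^*(Y/\cM_g)$ replaces $e^{x}$ by $e^{-x}$, changing the $x^2$-coefficient to $\tfrac12-\tfrac12+\tfrac1{12}=\tfrac1{12}$, which reproduces Mumford's relation $12\lambda_1=\kappa_1$ on $\cM_g$; the discrepancy $\tfrac{13}{12}$ versus $\tfrac1{12}$ is precisely the effect of using $T$ in place of $\omega$, and accounts for the $R^1\pi_*$ contribution to $\pi_!$.
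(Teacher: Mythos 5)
Your proof is correct and is essentially the paper's own argument: apply GRR to $\pi$ with coefficient bundle $T(Y/\cM_g)$, expand $\Ch$ and $\Td$ in $x=c_1(T(Y/\cM_g))$, and read off the coefficient $\tfrac12+\tfrac12+\tfrac1{12}=\tfrac{13}{12}$ of $x^2$ in degree two after pushforward. The only additions are your treatment of the orbifold/noncompactness issue (which the paper explicitly declines to address) and the consistency check against Mumford's relation, both of which are sound but not part of the paper's proof.
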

 
 \begin{proof}
 By the usual GRR calculation \cite{AH}, we have
 $$
{\rm  ch}( \pi_!(T(Y/\cM_g)) 
 =\pi_*\left({\rm Todd}(T(Y/\cM_g)\cup {\rm ch}(T(Y/\cM_g))\right).
 $$
 Now 
 $$
 {\rm Todd}(x) = 1 + \frac{x}{2} + \frac{x^2}{12} +\ldots
 $$
 and 
 $$
{\rm  ch}(x) = 1 + x +\frac{x^2}{2} +\ldots
 $$
 where $x= c_1(T(Y/\cM_g))$. Therefore the degree 4 component is
 $$ 
\left[ {\rm Todd}(x) {\rm ch}(x)\right]_{(4)} = \frac{13}{12} x^2.
 $$
 That is, the  degree 2 component of the GRR formula in our case is 
 $$
 c_1( \pi_!(T(Y/\cM_g)) = \frac{13}{12} \pi_*(x^2).
 $$
 \end{proof}
 
Observing that $c_1(T(Y/\cM_g) = e$ and 
$$
c_1( \pi_!(T(Y/\cM_g))  = c_1(\Lambda) = c_1(\det(\Lambda)),
$$
the lemma above shows that $c_1(\det(\Lambda)) = \frac{13}{12} e_1.$
Setting $\cL = \det(\Lambda)^{\otimes 12},$ we obtain

\begin{corollary}
In the notation above, $\cL$ is a line bundle over $\cM_g$
and one has the following identity:
$$
 c_1(\cL) 
 = 13 e_1.
 $$
 \end{corollary}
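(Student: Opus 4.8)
The plan is to deduce this directly from the Grothendieck--Riemann--Roch computation just established. First I would recall that the preceding lemma gives
\[
c_1\bigl(\pi_!(T(Y/\cM_g))\bigr)=\frac{13}{12}\,\pi_*\bigl(c_1(T(Y/\cM_g))^2\bigr),
\]
and that by the definition of the Hodge bundle $\Lambda=\pi_!(T(Y/\cM_g))\in K^0(\cM_g)$ together with the definition of the first Mumford--Morita--Miller class $e_1=\pi_*(e\cup e)$ with $e=c_1(T(Y/\cM_g))$, the right-hand side equals $\tfrac{13}{12}e_1$, while the left-hand side is $c_1(\Lambda)=c_1(\det\Lambda)$ since the first Chern class of a virtual bundle depends only on its determinant line. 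Hence $c_1(\det\Lambda)=\tfrac{13}{12}e_1$.

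Next I would observe that $\det\Lambda$ is an honest (rather than virtual) complex line bundle over $\cM_g$: the determinant of a class in $K^0$ is a genuine line bundle, and any tensor power of a line bundle is again a line bundle, so $\cL:=\det(\Lambda)^{\otimes 12}$ is a line bundle over $\cM_g$. Then, using the multiplicativity $c_1(M^{\otimes k})=k\,c_1(M)$ of the first Chern class under tensor powers, one gets
\[
c_1(\cL)=12\,c_1(\det\Lambda)=12\cdot\frac{13}{12}\,e_1=13\,e_1,
\]
which is the asserted identity.

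The only point requiring any care is the clearing of the denominator: the factor $\tfrac{13}{12}$ is exactly what forces the twelfth tensor power in the definition of $\cL$, and it is precisely this choice that makes $c_1(\cL)$ an \emph{integral} multiple of $e_1$; in particular $\cL$ has the property, noted in the text, that $\pi^*\cL$ is trivializable because $e_1$ is a characteristic class of $\pi\colon Y\to\cM_g$, so that $\cL$ together with a suitable $u$ provides valid input to the main index theorem. Beyond the GRR lemma, which we are assuming, there is no substantive obstacle here.
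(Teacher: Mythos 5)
Your proposal is correct and follows essentially the same route as the paper: apply the GRR lemma, identify $c_1(T(Y/\cM_g))=e$ so that $c_1(\det\Lambda)=\tfrac{13}{12}e_1$, and then clear the denominator by taking the twelfth tensor power to get $c_1(\cL)=13\,e_1$. The observations that $\det\Lambda$ is a genuine line bundle and that the factor $\tfrac{13}{12}$ motivates the definition $\cL=\det(\Lambda)^{\otimes 12}$ are exactly the points the paper makes.
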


We next construct a canonical projective family of Dirac operators
on the Riemann surface $\Sigma$.
We enlarge the parametrizing space by taking the product with the circle $\TT$.
Applying the main construction in the paper, we get
a primitive line bundle $J\longrightarrow Y^{[2]}$, where we denote 
the pullback of $Y$ over $\TT\times \cM_g$ 
by the same symbol. By the construction at the end of \S5, we obtain
a projective family of Dirac operators $\eth_J$ on the Riemann surface
$\Sigma$, parametrized by 
$\TT\times \cM_g$, having analytic index,
$$
\Index_a(\eth_J) \in K^0(\TT\times\cM_g; a \cup e_1),
$$
where $a \in H^1(\TT; \bbZ)$ is the generator.


\begin{thebibliography}{10}

\bibitem{Albin-Melrose}
P. Albin and R.B. Melrose,
{\em Relative Chern character, boundaries and index formul\ae},
Preprint. [arXiv:0808.0183]

 \bibitem{ASZ}
 O. Alvarez, I.M. Singer, B. Zumino,  
 {\em Gravitational anomalies and the family's index theorem}.  
 { Comm. Math. Phys.}  {\bf 96}  (1984),  no. 3, 409--417. 

\bibitem{AH} 
M.F. Atiyah, F. Hirzebruch, 
Riemann-Roch theorems for differentiable manifolds.  
Bull. Amer. Math. Soc.  65  (1959) 276--281.

\bibitem{AtSe}
M.F. Atiyah and G. Segal,
{\em Twisted $K$-theory and cohomology}.
Inspired by S. S. Chern,  5--43, Nankai Tracts Math., {\bf 11}, World  
Sci. Publ., Hackensack, NJ, 2006.
[arXiv:math/0510674]

\bibitem{Atiyah-Singer1}
M.~F. Atiyah and I.~M. Singer, 
{\em The index of elliptic operators. {I}{V}},
 {Ann. of Math.} (2) \textbf{93} (1971), 119--138. 

\bibitem{AS84} 
M.F. Atiyah, I.M. Singer, 
Dirac operators coupled to vector potentials.
Proc. Nat. Acad. Sci. U.S.A. 81 (1984), no. 8, Phys. Sci., 2597--2600. 
MR0742394 (86g:58127)  


\bibitem{Berline-Getzler-Vergne}
N. Berline, E. Getzler, M. Vergne, 
{Heat kernels and {D}irac operators},
Springer-Verlag, Berlin, 1992.

\bibitem{BEM}
P. Bouwknegt, J. Evslin, V. Mathai,
{\em T-duality: Topology Change from H-flux}, 
{Commun. Math. Phys.}, \textbf{249} no. 2 (2004) 383 - 415 
[arXiv:hep-th/0306062] 

\bibitem{Bry}
J.-L.~Brylinski, 
{Loop spaces, characteristic classes and geometric quantization}. 
Progress in Mathematics, 107. BirkhÂuser Boston, Inc., Boston, MA, 1993. 

\bibitem{CW}
A. L. Carey, B-L. Wang,
{\em Thom isomorphism and Push-forward map in twisted K-theory}.
[arXiv:math/0507414].

\bibitem{Cheeger}
J. Cheeger, 
{\em Multiplication of differential characters}.  Symposia Mathematica, Vol. XI (Convegno di Geometria, INDAM, Rome, 1972),  pp. 441--445. Academic Press, London, 1973. 

\bibitem{CS}
J. Cheeger and J. Simons,
{\em Differential characters and geometric invariants}. 
Geometry and topology, 50--80, 
Lecture Notes in Math., {\bf 1167}, Springer, Berlin, 1985.

\bibitem{Fedosov}
B. Fedosov,
Deformation quantization and index theory.  
Mathematical Topics, {\bf 9}. Akademie Verlag, Berlin, 1996. 

\bibitem{F86}
D. Freed, 
Determinants, torsion, and strings.  Comm. Math. Phys.  107  (1986),  no. 3, 483--513.
MR0866202 (88b:58130)



\bibitem{GrigSjos}
A. Grigis, J. Sj\"ostrand, 
Microlocal analysis for differential operators,
London Math. Soc. Lecture Notes Ser., {\bf 196}, Cambridge Univ. Press, 1994.

\bibitem{Harer}
J. Harer, 
The second homology group of the mapping class group of an orientable surface.
Invent. Math. 72 (1983), no. 2, 221--239. 
MR0700769 (84g:57006)

\bibitem{John}
S. Johnson, 
{\em Constructions with bundle gerbes},
Ph.D. thesis, 137 pages, [arXiv:math/0312175].

 \bibitem{HS} 
M. J. Hopkins, I. M. Singer,
{\em Quadratic functions in geometry, topology, and M-theory}. 
J. Differential Geom. {\bf 70} (2005), no. 3, 329--452.

\bibitem{HoFIOI} L. H{\"o}rmander, {\em Fourier integral operators, {I}}.
Acta Math. {\bf 127} (1971), 79--183.

\bibitem{mms1}
V. Mathai, R. B. Melrose, I. M. Singer, 
{\em The index of projective families of elliptic operators}.  
{\em Geom. Topol.}   \textbf{9}  (2005), 341--373.

\bibitem{fmtga}
R. B. Melrose, 
From Microlocal to Global Analysis, 
MIT Lecture Notes.
http://math.mit.edu/~rbm/18.199-S08/

\bibitem{Morita}
S. Morita,
Geometry of characteristic classes.
Translated from the 1999 Japanese original.
Translations of Mathematical Monographs, {\bf 199}.
Iwanami Series in Modern Mathematics. American Mathematical Society,  
Providence, RI, 2001.
 
\bibitem{Mur}
M.K. Murray,
{\em Bundle gerbes}. 
{ J. London Math. Soc.} (2)   \textbf{54}  (1996),  no. 2, 403--416. 

\bibitem{Murray-survey}
M.K. Murray,
{\em An Introduction to Bundle Gerbes}.
[arXiv:0712.1651]


\end{thebibliography}
\end{document}